%% @texfile{
%%     filename="SUSY-MatrixS.tex",
%%     version="1.0.1",
%%     date="Nov-2014",
%%     cdate="20120425",
%%     filetype="LaTeX2e",
%%     journal="J. Spectr. Theory 4, 715-768 (2014)",
%%     doi="10.4171/JST/84",
%%     copyright="Copyright (C) J. Eckhardt, F. Gesztesy, R. Nichols, and G. Teschl"
%%     }

%%%%%%%%%%%%%%%%%%%%%%%%%%%%%%%%%%%%%
\documentclass[reqno,10pt,centertags]{amsart}
\usepackage{amsmath,amsthm,amscd,amssymb,latexsym,upref,enumerate}
%\usepackage{amsfonts,mathrsfs,stmaryrd}
%%%%%%%%%%%%%%%%%%%%%%%%%%%%%%%%%%%%%
\usepackage{bbm}
\usepackage{nicefrac}
\usepackage{hyperref}

\newcommand*{\mailto}[1]{\href{mailto:#1}{\nolinkurl{#1}}}
\newcommand{\arxiv}[1]{\href{http://arxiv.org/abs/#1}{arXiv:#1}} 
 
%\usepackage[notcite,notref]{showkeys}
%\usepackage[nomsgs,ignoreunlbld]{refcheck}

%%%%% BOLD AND CALLIGRAPHIC LETTERS %%%%%%

\newcommand{\bbC}{{\mathbb{C}}}

\newcommand{\bbN}{{\mathbb{N}}}

\newcommand{\bbR}{{\mathbb{R}}}
\newcommand{\bbT}{{\mathbb{T}}}

\newcommand{\cD}{{\mathcal D}}

\newcommand{\cH}{{\mathcal H}}

%%%%%%%%%%%%% GREEK LETTERS%%%%%%%%%%%

\newcommand{\e}{{\varepsilon}}

%%%%%%%% OPERATOR NAMES AND SUCH %%%%%%%%%%

\DeclareMathOperator{\supp}{supp}
\DeclareMathOperator{\rank}{rank}
\DeclareMathOperator{\ran}{ran}
\DeclareMathOperator{\dom}{dom}

\DeclareMathOperator*{\slim}{s-lim}

\renewcommand{\Re}{\text{\rm Re}}
\renewcommand{\Im}{\text{\rm Im}}

\newcommand{\loc}{\text{\rm{loc}}}

\newcommand{\beq}{\begin{equation}}
\newcommand{\enq}{\end{equation}}

\newcommand{\no}{\notag}
\newcommand{\lb}{\label}
\newcommand{\f}{\frac}

\newcommand{\ol}{\overline}

\newcommand{\wti}{\widetilde}
\newcommand{\Oh}{O}
\newcommand{\oh}{o}
\newcommand{\hatt}{\widehat}
\newcommand{\bi}{\bibitem}
\newcommand{\slimes}{\text{\rm{l.i.m.}}}

\let\geq\geqslant
\let\leq\leqslant

\makeatletter
\def\theequation{\@arabic\c@equation}

%\renewcommand{\theequation}{\thesection.\arabic{equation}}

%%%%%%%%%%%% THEOREM ENVIRONMENTS %%%%%%%%%

\allowdisplaybreaks 
\numberwithin{equation}{section}

\newtheorem{theorem}{Theorem}[section]

\newtheorem{lemma}[theorem]{Lemma}
\newtheorem{corollary}[theorem]{Corollary}
\newtheorem{hypothesis}[theorem]{Hypothesis}

\theoremstyle{remark}
\newtheorem{remark}[theorem]{Remark}

%%%%%%%%%%%%%% END OF DEFINITIONS %%%%%%%%%%

\begin{document}

\title[Supersymmetry and Weyl--Titchmarsh Theory]{Supersymmetry and 
Schr\"odinger-Type Operators with Distributional Matrix-Valued Potentials}

\author[J.\ Eckhardt]{Jonathan Eckhardt}
\address{Faculty of Mathematics\\ University of Vienna\\
Oskar-Morgenstern-Platz 1\\ 1090 Wien\\ Austria}
\email{\mailto{jonathan.eckhardt@univie.ac.at}}
%\email{jonathan.eckhardt@univie.ac.at}
\urladdr{\url{http://homepage.univie.ac.at/jonathan.eckhardt/}}
%\urladdr{http://homepage.univie.ac.at/jonathan.eckhardt/}

\author[F.\ Gesztesy]{Fritz Gesztesy}
\address{Department of Mathematics,
University of Missouri,
Columbia, MO 65211, USA}
\email{\mailto{gesztesyf@missouri.edu}}
%\email{gesztesyf@missouri.edu}
\urladdr{\url{http://www.math.missouri.edu/personnel/faculty/gesztesyf.html}}
%\urladdr{http://www.math.missouri.edu/personnel/faculty/gesztesyf.html}

\author[R.\ Nichols]{Roger Nichols}
\address{Mathematics Department, The University of Tennessee at Chattanooga, 415 EMCS Building, Dept. 6956, 615 McCallie Ave, Chattanooga, TN 37403, USA}
\email{\mailto{Roger-Nichols@utc.edu}}
%\email{Roger-Nichols@utc.edu}
\urladdr{\url{http://www.utc.edu/faculty/roger-nichols/}} 
%\urladdr{http://www.utc.edu/faculty/roger-nichols/} 

\author[G.\ Teschl]{Gerald Teschl}
\address{Faculty of Mathematics\\ University of Vienna\\
Oskar-Morgenstern-Platz 1\\ 1090 Wien\\ Austria\\ and International
Erwin Schr\"odinger
Institute for Mathematical Physics\\ Boltzmanngasse 9\\ 1090 Wien\\ Austria}
\email{\mailto{Gerald.Teschl@univie.ac.at}}
%\email{Gerald.Teschl@univie.ac.at}
\urladdr{\url{http://www.mat.univie.ac.at/~gerald/}} 
%\urladdr{http://www.mat.univie.ac.at/~gerald/}

%\dedicatory{}
\thanks{Research supported by the Austrian Science Fund (FWF) under Grant No.\ Y330.}
\thanks{J. Spectral Theory {\bf 4}, 715--768 (2014)} 
\date{\today}
\keywords{Sturm--Liouville operators, distributional coefficients, Weyl--Titchmarsh theory, supersymmetry.}
\subjclass[2010]{Primary 34B20, 34B24, 34L05; Secondary 34B27, 34L10, 34L40.}

%%%%%%%%%%%%%%%%%%%%%%%%%%%%%%%%%%%%%%
%%%%%%%%%%%%%%%%%%%%%%%%%%%%%%%%%%%%%%
\begin{abstract} 
Building on work on Miura's transformation by Kappeler, Perry, Shubin, and Topalov, 
we develop a detailed spectral theoretic treatment of Schr\"odinger operators with matrix-valued 
potentials, with special emphasis on distributional potential coefficients. 

Our principal method relies on a supersymmetric (factorization) formalism underlying Miura's transformation, which intimately connects the triple of operators $(D, H_1, H_2)$ of the form 
\[
D= \left(\begin{smallmatrix} 0 & A^* \\ A & 0 \end{smallmatrix}\right)  
\, \text{ in } \, L^2(\mathbb{R})^{2m} \, \text{ and } \, H_1 = A^* A, \;\;
H_2 = A A^* \, \text{ in } L^2(\mathbb{R})^m.
\]
Here $A= I_m (d/dx) + \phi$ in $L^2(\mathbb{R})^m$, with a 
matrix-valued coefficient $\phi = \phi^* \in L^1_{\text{loc}}(\mathbb{R})^{m \times m}$, $m \in \mathbb{N}$, 
thus explicitly permitting distributional potential coefficients $V_j$ in $H_j$, $j=1,2$, 
where
\[
H_j = - I_m \frac{d^2}{dx^2} + V_j(x), \quad V_j(x) = \phi(x)^2 + (-1)^{j} \phi'(x), \; j=1,2.  
\]
Upon developing Weyl--Titchmarsh theory for these generalized Schr\"odinger operators $H_j$, 
with (possibly, distributional) matrix-valued potentials $V_j$, we provide some spectral 
theoretic applications, including a derivation of the corresponding spectral representations 
for $H_j$, $j=1,2$. Finally, we derive a local Borg--Marchenko uniqueness theorem for $H_j$, 
$j=1,2$, by employing the underlying supersymmetric structure and reducing it to the known 
local Borg--Marchenko uniqueness theorem for $D$. 
\end{abstract}
%%%%%%%%%%%%%%%%%%%%%%%%%%%%%%%%%%%%%%
%%%%%%%%%%%%%%%%%%%%%%%%%%%%%%%%%%%%%%

\maketitle

%%%%%%%%%%%%%%%%%%%%%%%%%%%%%%%%%%%%%%
%%%%%%%%%%%%%%%%%%%%%%%%%%%%%%%%%%%%%%
\section{Introduction}  \lb{s1}
%%%%%%%%%%%%%%%%%%%%%%%%%%%%%%%%%%%%%%
%%%%%%%%%%%%%%%%%%%%%%%%%%%%%%%%%%%%%%

This paper was inspired by an investigation concerning ``the Miura map on the line'' by 
Kappeler, Perry, Shubin, and Topalov \cite{KPST05} in 2005. In it, the authors consider the well-known 
Miura map, 
\begin{equation} 
\phi \mapsto \phi^2 + \phi',     \lb{1.1}
\end{equation} 
which relates appropriate classes of solutions of the 
Korteweg--de Vries (KdV) and modified Korteweg--de Vries (mKdV) equation (cf., e.g., 
\cite{Ge91}, \cite{Ge92}, \cite{GSS91}, \cite{GS90}  
and the literature cited therein). The Miura map is closely related with factorizations of the KdV Lax 
operator $H$, the one-dimensional Schr\"odinger operator in $L^2(\bbR)$, into a product of two first-order operators 
of the form
\begin{equation}
H = A A^* = - \f{d^2}{dx^2} + V(x), \quad V(x) = \phi(x)^2 +  \phi'(x),   \lb{1.2} 
\end{equation}
where
\begin{equation}
A = \f{d}{dx} + \phi(x), \quad A^* = - \f{d}{dx} + \phi(x).     \lb{1.3} 
\end{equation}
(While these factorizations are formal at this point, their precise mathematical content is discussed in 
Sections \ref{s2} and \ref{s3}.) 
In particular, under the assumption $\phi \in L^2_{\loc}(\bbR)$, $\phi$ real-valued a.e.\ on $\bbR$, 
this permits the authors in \cite{KPST05} to discuss real-valued distributional potentials 
$V \in H^{-1}_{\loc}(\bbR)$, and hence 
accomplish a remarkable extension of the standard theory of self-adjoint one-dimensional Schr\"odinger operators in $L^2(\bbR)$ which typically deals with the case of real-valued potentials 
$V \in L^1_{\loc}(\bbR)$ (resp., $V \in L^2_{\loc}(\bbR)$). This program is carried out in \cite{KPST05} by relying on oscillation theoretic techniques and Hartman's concept of principal and nonprincipal solutions. In particular, the principal focus of \cite{KPST05} is a detailed investigation of the Miura map \eqref{1.1}, its range, and its geometry on the real line, with special emphasis on function spaces with low regularity.

As it happens, the Miura map \eqref{1.1} is intimately connected with an underlying supersymmetric 
structure which relates a triple of operators $(D,H_1,H_2)$ of the form,
\begin{align}
\begin{split} 
& D= \begin{pmatrix} 0 & A^* \\ A & 0 \end{pmatrix} \, \text{ in } \, L^2(\bbR)^{2},    \lb{1.4} \\
& H_1 = A^* A \, \text{ and } \, H_2 = A A^* \, \text{ in } \, L^2(\bbR). 
\end{split} 
\end{align}
Most notably in this context, spectral properties of one of $D, H_1, H_2$ essentially determine the corresponding spectral properties of the remaining two operators in the triple $(D,H_1,H_2)$ (as 
described in Appendix \ref{sA}). In particular, since, in accordance with \eqref{1.3}, 
$A = (d/dx) + \phi(x)$, $A^* = - (d/dx) + \phi(x)$ deal with the (non-distributional) 
coefficient $\phi \in L^2_{\loc}(\bbR)$ only, so does the 
Dirac-type operator $D$. Consequently, spectral theory (including Weyl--Titchmarsh theory) for the 
standard Dirac operator $D$ should lead in an effective and streamlined manner to spectral and 
Weyl--Titchmarsh theory for the generalized Schr\"odinger operators $H_1$ and $H_2$ which may harbor distributional potential coefficients $V_j$, as
\begin{equation}
H_j = - \f{d^2}{dx^2} + V_j(x), \quad V_j(x) = \phi(x)^2 + (-1)^{j} \phi'(x), \; j=1,2.     \lb{1.5} 
\end{equation}
Realizing this circle of ideas is precisely what is offered in this paper. Moreover, the fact that exploiting the underlying supersymmetric structure is most natural in this context will become clear as we can effortless incorporate two important generalizations as follows:  \\[2mm] 
$\bullet$ We permit more general coefficients $\phi$ and hence (distributional) coefficients $V_j$, 
\hspace*{2.5mm} $j=1,2$, as we only need to assume $\phi \in L^1_{\loc}(\bbR)$.  \\[2mm]
$\bullet$ We actually consider the matrix-valued case in which $\phi$ and $V_j$, $j=1,2$, are 
\hspace*{2.5mm} $m \times m$ self-adjoint matrices a.e.\ on $\bbR$.

\medskip 

Before describing the content of this paper, it is appropriate to comment on the history of singular 
Sturm--Liouville operator with special emphasis on the papers devoted to distributional potentials. 

The particular case of point interactions as special distributional coefficients in Schr\"odinger operators received enormous attention, too numerous to be mentioned here in detail. Hence, we only refer to the standard monographs by Albeverio, Gesztesy, H{\o}egh-Krohn, and Holden \cite{AGHKH05} and  
Albeverio and Kurasov \cite{AK01}, and some of the more recent developments in 
Albeverio, Kostenko, and Malamud \cite{AKM10}, Kostenko and Malamud \cite{KM10}, \cite{KM10a}.
We also mention the case of discontinuous Schr\"odinger operators originally considered by Hald \cite{Ha84},
motivated by the inverse problem for the torsional modes of the earth. For recent development in this direction we refer to Shahriari, Jodayree Akbarfam, and Teschl \cite{SJT12}. 

The case of Schr\"odinger operators with strongly singular and oscillating potentials that should be 
mentioned in this context goes back to studies by Baeteman and Chadan \cite{BC75},  \cite{BC76}, Combescure \cite{Co80}, Combescure and Ginibre \cite{CG76}, Pearson \cite{Pe79}, Rofe-Beketov and Hristov \cite{RH66}, \cite{RH69} and a more recent contribution treating distributional potentials by 
Herczy\'nski \cite{He89}. The case of very general (i.e., three-coefficient) singular Sturm--Liouville operators including distributional potentials has been studied by Bennewitz and Everitt \cite{BE83} in 1983 (see also \cite[Sect.\ I.2]{EM99}). They restrict their considerations to compact intervals and focus on the special case of a left-definite setting. An extremely thorough and systematic investigation, including even and odd higher-order operators defined in terms of appropriate quasi-derivatives, and in the general case of matrix-valued coefficients (including distributional potential coefficients in the context of Schr\"odinger-type operators) was presented by Weidmann \cite{We87} in 1987. In fact, the general approach in \cite{BE83} and \cite{We87} draws on earlier discussions of quasi-derivatives in Shin \cite{Sh38}--\cite{Sh43}, 
Naimark \cite[Ch.\ V]{Na68}, and Zettl \cite{Ze75}. 
Still, it appears that the distributional coefficients treated in \cite{BE83} did not catch on and subsequent authors referring to this paper mostly focused on the various left and right-definite aspects developed therein. Similarly, it seems likely that the extraordinary generality exerted by Weidmann \cite{We87} in his treatment of higher-order differential operators obscured the fact that he already dealt with distributional potential coefficients back in 1987.   

However, it was not until 1999 that Savchuk and Shkalikov \cite{SS99} started a new development for Sturm--Liouville (resp., Schr\"odinger) operators with distributional potential coefficients in connection with 
areas such as, self-adjointness proofs, spectral and inverse spectral theory, oscillation properties, spectral properties in the non-self-adjoint context, etc. In addition to the important series of papers by Savchuk and Shkalikov \cite{SS99}--\cite{SS10}, we mention other groups such as Albeverio, Hryniv, and Mykytyuk \cite{AHM08}, Bak and Shkalikov \cite{BS02},  Ben Amara and Shkalikov \cite{BS09}, Ben Amor and Remling \cite{BR05}, Davies \cite{Da13}, 
Djakov and Mityagin \cite{DM09}--\cite{DM12}, Eckhardt and Teschl \cite{ET13}, 
Frayer, Hryniv, Mykytyuk, and Perry \cite{FHMP09}, Gesztesy and Weikard \cite{GW13}, 
Goriunov and Mikhailets \cite{GM10}, \cite{GM10a}, Hryniv \cite{Hr11}, 
Kappeler and M\"ohr \cite{KM01}, Kappeler, Perry, Shubin, and Topalov \cite{KPST05}, 
Kappeler and Topalov \cite{KT04}, 
Hryniv and Mykytyuk \cite{HM01}--\cite{HM12}, Hryniv, Mykytyuk, and Perry \cite{HMP11}--\cite{HMP11a}, 
Kato \cite{Ka10}, Korotyaev \cite{Ko03}, \cite{Ko12}, Maz'ya and Shaposhnikova \cite[Ch.\ 11]{MS09}, 
Maz'ya and Verbitsky \cite{MV02}--\cite{MV06}, Mikhailets and Molyboga \cite{MM04}--\cite{MM09}, 
Mirzoev and Safanova \cite{MS11}, Mykytyuk and Trush \cite{MT10}, 
Sadovnichaya \cite{Sa10}, \cite{Sa11}. In particular, the paper by Mirzoev and Safanova \cite{MS11} is closely related to the present one as it also employs the use of a quasi-derivative of the type 
$f^{[1]}=f' + \phi f$ to define a Schr\"odinger-type operator via a Miura-type transformation and appears 
to be the only paper known to us since Weidmann's 1987 monograph and the very recent 
\cite{MM13} that deals with the matrix-valued 
case, that is, $f$ is $\bbC^m$-valued, $\phi$ is $\bbC^{m \times m}$-valued, $m\in\bbN$. The prime focus of \cite{MS11} is the computation of deficiency indices of the underlying minimal operator.

It should be mentioned that some of the attraction in connection with distributional potential coefficients 
in the Schr\"odinger operator clearly stems from the low-regularity investigations of solutions of the 
Korteweg--de Vries (KdV) equation. We mention, for instance, Buckmaster and Koch \cite{BK12}, 
Grudsky and Rybkin \cite{GR12}, Kappeler and M\"ohr \cite{KM01}, Kappeler and Topalov \cite{KT05}, 
\cite{KT06}, and Rybkin \cite{Ry10}. 

The case of strongly singular potentials at an endpoint and the associated Weyl--Titchmarsh--Kodaira theory 
for Schr\"odinger operators can already be found in the seminal paper by Kodaira \cite{Ko49}. A gap in
Kodaira's approach was later circumvented by Kac \cite{Ka67}. The theory did not receive much further attention
until it was independently rediscovered and further developed by Gesztesy and Zinchenko \cite{GZ06}.
This soon lead to a systematic development of Weyl--Titchmarsh theory for strongly singular potentials and we mention, for instance, Eckhardt \cite{E14}, Eckhardt and Teschl \cite{ET13a}, Fulton \cite{Fu08}, 
Fulton and Langer \cite{FL10}, Fulton, Langer, and Luger \cite{FLL12}, 
Kostenko, Sakhnovich, and Teschl \cite{KST10}, \cite{KST12}, \cite{KST12a}, \cite{KT11}, \cite{KT12}, and 
Kurasov and Luger \cite{KL11}. 

We also mention that a different approach to general (i.e., three-coefficient) singular 
Sturm--Liouville operators (which are not necessarily assumed to be bounded from below) on an arbitrary 
interval $(a,b) \subseteq \bbR$, has been developed simultaneously in \cite{EGNT13} in the special 
scalar case $m=1$. This paper systematically develops Weyl--Titchmarsh theory for differential expressions of the type 
\begin{equation}
  \tau f = \frac{1}{r} \left( - \big(p[f' + \phi f]\big)' + \phi p[f' + \phi f] + qf\right)   \lb{1.6}
\end{equation}
and hence is very close in spirit to the general discussion provided by Weidmann \cite{We87}. Here the coefficients $p, q, r, \phi$ are real-valued and Lebesgue measurable on $(a,b)$, with $p\not=0$, $r>0$ 
a.e.\ on $(a,b)$, and $p^{-1}, q, r, \phi \in L^1_{\loc}((a,b); dx)$, and $f$ is supposed to satisfy
\begin{equation} 
f \in AC_{\text{loc}}((a,b)), \; p[f' + \phi f] \in AC_{\text{loc}}((a,b)),     \lb{1.7} 
\end{equation}
with $AC_{\loc}((a,b))$ denoting the set of locally absolutely continuous functions on $(a,b)$. In particular, this study includes distributional coefficients. (The paper \cite{EGNT13} does not employ the supersymmetric formalism.) Inverse spectral theory for these operators is treated in \cite{EGNT13a}.

It remains to briefly describe the content of this paper: Section \ref{s2} recalls the basics of 
Weyl--Titchmarsh theory for supersymmetric Dirac-type operators 
$D= \left(\begin{smallmatrix} 0 & A^* \\ A & 0 \end{smallmatrix}\right)$ 
in $L^2(\bbR)^{2m}$, where $A = I_m (d/dx) + \phi$ in $L^2(\bbR)^m$ (cf.\ \eqref{2.3}--\eqref{2.6}), with a 
matrix-valued coefficient $\phi = \phi^* \in L^1_{\loc}(\bbR)^{m \times m}$, $m \in \bbN$, following the treatment in \cite{CG02}. In particular, we review Weyl--Titchmarsh theory for $D$ on the half-line and the full real line, including the $2m \times 2m$ matrix-valued Green's function of $D$. In Section \ref{s3} we exploit the supersymmetric structure of $D$ and analyze the underlying generalized Schr\"odinger operators 
$H_1 = A^* A$ and $H_2 = A A^*$ in $L^2(\bbR)^m$. We derive the Weyl--Titchmarsh solutions for 
$H_j$, $j=1,2$, given those of $D$ described in Section \ref{s2}, and describe the precise connection between the half-line Weyl--Titchmarsh matrices of $H_j$, $j=1,2$, and $D$. In addition, we construct the 
$m \times m$ matrix-valued Green's functions of $H_j$, $j=1,2$, and the corresponding analogs 
belonging to the half-lines $[x_0,\infty)$ and $(-\infty,x_0]$ with a Dirichlet boundary condition at $x_0$. In our final Section \ref{s4} we provide some spectral theoretic applications of the supersymmetric approach outlined in Section \ref{s3} and after deriving the fundamental aspects of Weyl--Titchmarsh theory for the generalized Schr\"odinger operators $H_j$, $j=1,2$, and a discussion of the corresponding spectral representations, we derive a local Borg--Marchenko uniqueness theorem by utilizing the known analog for the Dirac operator $D$. Supersymmetric Dirac-type operators and associated commutation methods are briefly summarized in Appendix \ref{sA}.

Next, we briefly summarize some of the notation used in this paper: 
All $m\times m$ matrices $M\in\bbC^{m\times m}$ will be  considered over
the field of complex numbers $\bbC$. Moreover, $I_m$ denotes
the identity matrix in $\bbC^{m\times m}$ for $m\in\bbN$, $M^*$
the adjoint (i.e., complex conjugate transpose), $M^\top$ the transpose 
of the matrix $M$, $[M_1,M_2] = M_1 M_2 - M_2 M_1$ denotes the standard commutator 
of two matrices $M_j\in \bbC^{m \times m}$, $j=1,2$.

$AC([a,b])$ (resp., $AC_{\loc}(c,d)))$ denotes the
set of (locally) absolutely continuous functions on $[a,b]$ (resp., $(c,d)$).
We also agree that $L^2((a,b))^m$, $m\in\bbN$, without explicit depiction of a measure always 
denotes $L^2((a,b); dx)^m$, with $dx$ representing the Lebesgue measure restricted to 
$(a,b)$, $-\infty \leq a < b \leq \infty$, in particular,
\begin{equation}
L^2((a,b))^m = \bigg\{U:(a,b)\to\bbC^{m} \, \bigg| \, \int_a^b dx \,
\|U(x)\|^2_{\bbC^{m}}<\infty \bigg\},  \quad m \in \bbN.    \lb{1.8}
\end{equation}
The identity operator in $L^2((a,b))^m$ will simply be denoted by $I$. 

For ease of notation we will typically use the short cut $[x_0,\pm\infty)$ to denote the 
half-lines $[x_0,\infty)$ or $(-\infty,x_0]$ for some $x_0 \in \bbR$. 

Finally, let $T$ be a linear operator mapping (a subspace of) a
Hilbert space into another, with $\dom(T)$, $\ran(T)$, and $\ker(T)$ denoting the
domain, range, and kernel (i.e., null space) of $T$. 
The closure of a closable operator $S$ is denoted by $\ol S$. 
The spectrum and resolvent set of a closed linear operator in a Hilbert space will be denoted by 
$\sigma(\cdot)$ and $\rho(\cdot)$, respectively.

%%%%%%%%%%%%%%%%%%%%%%%%%%%%%%%%%%%%%%
%%%%%%%%%%%%%%%%%%%%%%%%%%%%%%%%%%%%%%
\section{Weyl--Titchmarsh Matrices for Supersymmetric Dirac Operators}  \lb{s2}
%%%%%%%%%%%%%%%%%%%%%%%%%%%%%%%%%%%%%%
%%%%%%%%%%%%%%%%%%%%%%%%%%%%%%%%%%%%%%

In this preparatory section we briefly review the Weyl--Titchmarsh theory for Dirac-type operators $D$ in the special supersymmetric case. In particular,  $D$ is constructed as a special case of the theory of singular Hamiltonian systems as pioneered by Hinton and Shaw \cite{HS81}--\cite{HS86} (see also \cite{HS93}, \cite{HS97}) and applied to Dirac-type operators in 
\cite{CG02}. 

Throughout this section we closely follow the treatment in \cite{CG02} (simplified to the present supersymmetric Dirac-type operator) and hence are making the following assumptions.

%%%%%%%%%
\begin{hypothesis} \lb{h2.1}
Suppose $\phi \in L^1_{\loc}(\bbR)^{m \times m}$, $m \in \bbN$, and 
$\phi(\cdot) = \phi(\cdot)^*$ a.e.\ on $\bbR$. 
\end{hypothesis}
%%%%%%%%%

Given Hypothesis \ref{h2.1} we introduce the maximally defined operators 
$A$ and $A^+$ in $L^2(\bbR)^m$ by
\begin{align}
\begin{split}
& (A u)(x) = u'(x) + \phi (x) u(x)  \text{ for a.e.\ $x \in \bbR$,}  \\
& \, u \in \dom(A) = 
\big\{v \in L^2(\bbR)^m \,\big|\, v \in AC_{\loc}(\bbR); \, (v' + \phi v) 
\in L^2(\bbR)^m \big\},    \lb{2.1} 
\end{split} 
\end{align}    
and 
\begin{align}
\begin{split}
& (A^+ u)(x) = - u'(x) + \phi (x) u(x)   \text{ for a.e.\ $x \in \bbR$,} \\
& \, u \in \dom(A^+) = 
\big\{v \in L^2(\bbR)^m \,\big|\, v \in AC_{\loc}(\bbR); \, (v' - \phi v) 
\in L^2(\bbR)^m\big\}.     \lb{2.2} 
\end{split} 
\end{align}   
In addition, we consider the maximally defined Dirac-type operator $D$ in 
$L^2(\bbR)^{2m}$ by
\begin{align}
& (D U)(x) =  \bigg(\begin{pmatrix} 0 & A^+ \\ A & 0 \end{pmatrix}
\begin{pmatrix} u_1 \\ u_2 \end{pmatrix}\bigg)(x)  
= \begin{pmatrix} (A^+ u_2)(x) \\ (A u_1)(x) \end{pmatrix}
   \text{ for a.e.\ $x \in \bbR$,}    \no \\ 
& \, U = \begin{pmatrix} u_1 \\ u_2 \end{pmatrix} \in \dom(D) = 
\dom(A) \oplus \dom(A^+)    \lb{2.3} \\
& \hspace*{2.5cm} = \bigg\{V = \begin{pmatrix} v_1 \\ v_2 \end{pmatrix}  \in 
L^2(\bbR)^{2m} \, \bigg | \, V \in AC_{\loc}(\bbR)^{2m}; \, 
DV \in L^2(\bbR)^{2m}\bigg\}.    \no 
\end{align} 

The basic known result on $A$, $A^+$, and $D$ then reads as follows:

%%%%%%%%%%%
\begin{theorem} [\cite{CG02}, \cite{HS81}, \cite{HS83}, \cite{HS84}] \lb{t2.2}
Assume Hypothesis \ref{h2.1}. Then $A$ and $A^+$ are closed in 
$L^2(\bbR)^m$ and 
\begin{equation}
A^* = A^+, \quad (A^+)^* = A.     \lb{2.4}
\end{equation}
In addition, $D$ is self-adjoint in $L^2(\bbR)^{2m}$, that is, $D$ is of the form, 
\begin{equation}
D = \begin{pmatrix} 0 & A^* \\ A & 0 \end{pmatrix}.    \lb{2.5}
\end{equation}
\end{theorem}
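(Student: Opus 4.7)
The plan is to establish $A^*=A^+$ first; the symmetric identity $(A^+)^*=A$ follows by the same argument with the roles of $A$ and $A^+$ interchanged. Once these are in place, closedness of both $A$ and $A^+$ is automatic (adjoints are always closed), and self-adjointness of $D$ reduces to a standard block-matrix manipulation. Density of $\dom(A)$ and $\dom(A^+)$ in $L^2(\bbR)^m$ is clear since $C_0^\infty(\bbR)^m$ is contained in both.

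For the inclusion $A^+\subseteq A^*$, I would take $u\in\dom(A)$ and $v\in\dom(A^+)$ and consider the scalar function $f(x)=u(x)^*v(x)$. Using $u'=Au-\phi u$, $v'=\phi v-A^+v$ together with the crucial hermiticity $\phi=\phi^*$, the $\phi$-contributions cancel and one obtains
\[
f'(x)=(Au)(x)^*v(x)-u(x)^*(A^+v)(x)\quad\text{for a.e.\ }x\in\bbR.
\]
Cauchy--Schwarz then gives $f\in L^1(\bbR)$ (from $u,v\in L^2$) and $f'\in L^1(\bbR)$ (from $Au,A^+v,u,v\in L^2$), so $f$ has limits at $\pm\infty$ that must be zero. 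Integrating over $\bbR$ produces the identity $\langle Au,v\rangle_{L^2}=\langle u,A^+v\rangle_{L^2}$, hence $v\in\dom(A^*)$ with $A^*v=A^+v$.

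For the reverse inclusion $A^*\subseteq A^+$, let $v\in\dom(A^*)$ and set $w=A^*v\in L^2(\bbR)^m$. Testing $\langle Au,v\rangle=\langle u,w\rangle$ against all $u\in C_0^\infty(\bbR)^m\subset\dom(A)$ gives, in the distributional sense, $-v'+\phi v=w$. Since $\phi v\in L^1_{\loc}(\bbR)^m$ (as $\phi\in L^1_{\loc}$ and $v\in L^2_{\loc}$) and $w\in L^1_{\loc}$, the rearranged equation $v'=\phi v-w$ lies in $L^1_{\loc}$, so $v\in AC_{\loc}(\bbR)^m$ and the distributional identity becomes a classical one; consequently $v\in\dom(A^+)$ with $A^+v=w$.

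Self-adjointness of $D$ is then formal: with $A^*=A^+$ and $(A^+)^*=A$ (so $A$ is closed), $D$ acts off-diagonally on $\dom(A)\oplus\dom(A^*)$. Computing $D^*$ by separately setting the top and bottom components of a test vector to zero in the defining relation $\langle DU,V\rangle=\langle U,D^*V\rangle$ identifies $\dom(D^*)=\dom(A^{**})\oplus\dom(A^*)=\dom(A)\oplus\dom(A^*)=\dom(D)$, with the same off-diagonal action, giving $D^*=D$. The main obstacle is the boundary-term step: the essential observation is that, in the supersymmetric Hermitian setting, the $\phi$-contributions cancel identically in the computation of $(u^*v)'$, so both $u^*v$ and its derivative automatically lie in $L^1(\bbR)$ under the bare hypothesis $\phi=\phi^*\in L^1_{\loc}(\bbR)^{m\times m}$, with no further regularity of $\phi$ required.
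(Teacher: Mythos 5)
Your approach genuinely differs from the paper's, which cites a limit point result \cite[Lemma~2.15]{CG02} for the Dirac expression $\cD$ together with Hinton--Shaw theory for singular Hamiltonian systems, and then reads off the adjoint relations from the resulting self-adjointness of $D$. You instead prove $A^*=A^+$ directly, and your forward inclusion $A^+\subseteq A^*$ is correct and is in effect a direct elementary proof of the limit point property: the $\phi$-terms in $(u^*v)'$ cancel by Hermiticity, and then $u^*v$ together with its derivative lie in $L^1(\bbR)$ by Cauchy--Schwarz, forcing the boundary contributions to vanish.

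Two claims in the reverse inclusion (and in your density argument) fail under the bare hypothesis $\phi\in L^1_{\loc}(\bbR)^{m\times m}$. The inclusion $C_0^\infty(\bbR)^m\subseteq\dom(A)$ is false in general: $u\in C_0^\infty$ lies in $\dom(A)$ only if $\phi u\in L^2(\bbR)^m$, which forces $\phi\in L^2_{\loc}$ on $\supp(u)$; take $\phi(x)=|x|^{-3/4}$ near $0$, $m=1$, to see this can fail. And the inference ``$\phi v\in L^1_{\loc}$ since $\phi\in L^1_{\loc}$, $v\in L^2_{\loc}$'' is false, as $L^1_{\loc}\cdot L^2_{\loc}\not\subseteq L^1_{\loc}$ (e.g.\ $\phi=|x|^{-2/3}$, $v=|x|^{-1/3}$ near $0$), so the bootstrap to $v\in AC_{\loc}$ stalls at the first step. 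Both gaps are repaired by the quasi-derivative device underlying the paper: test against $u=\Phi w$, $w\in C_0^\infty(\bbR)^m$, where $\Phi\in AC_{\loc}(\bbR)^{m\times m}$ solves $\Phi'=-\phi\Phi$, $\Phi(x_0)=I_m$. Then $Au=\Phi w'$ is bounded with compact support, so $u\in\dom(A)$; such $u$ span a dense subspace of $L^2(\bbR)^m$; and the resulting distributional identity $(\Phi^*v)'=-\Phi^*A^*v$ has right-hand side in $L^1_{\loc}$ because $\Phi^*$ is locally bounded, which gives $\Phi^*v\in AC_{\loc}(\bbR)^m$, hence $v\in AC_{\loc}(\bbR)^m$ and $-v'+\phi v=A^*v$. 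This is exactly the mechanism that permits $\phi$ to be merely $L^1_{\loc}$ rather than $L^2_{\loc}$, and it is worth making explicit rather than implicit.
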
 
%%%%%%%%%%%
\begin{proof} 
By \cite[Lemma\ 2.15]{CG02}, the differential expression 
\begin{equation}
\cD = \begin{pmatrix} 0 & -I_m(d/dx) + \phi(x) \\ I_m(d/dx) + \phi(x) & 0 
\end{pmatrix} \text{ for a.e.\ $x \in \bbR$,}     \lb{2.6}
\end{equation}
is in the limit point case at $\pm \infty$. (For a subsequent and more general result we refer to \cite{LM03}, see also \cite{LM00} and \cite{LO82} for such proofs under stronger hypotheses on $\phi$). Combining this result with the Weyl--Titchmarsh theory developed for singular Hamiltonian systems by Hinton and Shaw in a series of papers \cite{HS81}, \cite{HS83}, \cite{HS84}, 
yields self-adjointness of the maximal operator associated to the differential expression $\cD$. By \eqref{A.2}, $A$ and $A^+$ are hence necessarily closed, and consequently, adjoint to each other, proving \eqref{2.4} and \eqref{2.5}. 
\end{proof}
%%%%%%%%%%%

Because of the special structure \eqref{2.5}, $D$ is called a {\it supersymmetric} Dirac-type operator. For a discussion of its general properties we refer to Appendix \ref{sA}.

Because of \eqref{2.4}, we identify $A^+$ and $A^*$ from this point on. 
  
In order to discuss $m \times m$ Weyl--Titchmarsh matrices corresponding to $D$ on the half-lines $(-\infty,x_0]$ and $[x_0,\infty)$, we introduce boundary condition parameters 
$\alpha = (\alpha_1 \; \alpha_2) \in\bbC^{m\times 2m}$ satisfying the conditions 
\begin{equation}
\alpha\alpha^*=I_m, \quad \alpha J\alpha^*=0, \, \text{  where } \, 
J = \begin{pmatrix} 0 & -I_m \\ I_m & 0 \end{pmatrix}.      \lb{2.7}
\end{equation} 
Explicitly, this reads
\begin{equation}
\alpha_1\alpha_1^* +\alpha_2\alpha_2^*=I_m, \quad \alpha_2\alpha_1^* 
-\alpha_1\alpha_2^*=0. \lb{2.8}
\end{equation}
In fact, one also has
\begin{equation}
\alpha_1^*\alpha_1 +\alpha_2^*\alpha_2=I_m, \quad \alpha_2^*\alpha_1 
-\alpha_1^*\alpha_2=0, \lb{2.9}
\end{equation}
as is clear from
\begin{equation}
\begin{pmatrix} \alpha_1 & \alpha_2\\ -\alpha_2 & \alpha_1 \end{pmatrix}
\begin{pmatrix} \alpha_1^* & -\alpha_2^*\\ \alpha_2^* & \alpha_1^*
\end{pmatrix}=I_{2m}=\begin{pmatrix} \alpha_1^* & -\alpha_2^*\\ 
\alpha_2^* & \alpha_1^* \end{pmatrix}\begin{pmatrix} \alpha_1 & \alpha_2\\
-\alpha_2 & \alpha_1 \end{pmatrix}, \lb{2.10}
\end{equation}
since any left inverse matrix is also a right inverse, and vice versa. 
Moreover, from \eqref{2.9} one obtains 
\begin{equation}
\alpha^*\alpha J + J\alpha^*\alpha =  J.    \lb{2.11}
\end{equation}
The particular choice where $\alpha$ equals 
\begin{equation}
\alpha_0=(I_m\; 0) 
\end{equation} 
will play a fundamental role later on.

Next, denote by $U_{\pm}(\zeta,\,\cdot\,,x_0,\alpha)$ the $2m \times m$ 
matrix-valued Weyl--Titchmarsh solutions associated with $\cD U = \zeta U$, 
$\zeta\in\bbC\backslash\bbR$, defined by the property that the $m$ columns of $U_\pm$ span the deficiency spaces $N(\zeta,\pm\infty)$, 
$\zeta \in \bbC\backslash\bbR$, given by
\begin{align}
& N(\zeta,\pm\infty)= \big\{V \in L^2((x_0,\pm\infty))^{2m} \,\big|\, 
V \in AC([x_0, x_0 \pm R])^{2m} \, \text{for all $R>0$};     \no \\ 
& \hspace*{6.3cm} \cD V = \zeta V \, \text{a.e.\ on $(x_0,\pm\infty)$} \big\}, 
\lb{2.12} 
\end{align}
and normalized such that 
\begin{align}
U_\pm(\zeta,x,x_0,\alpha)&=\begin{pmatrix}u_{\pm,1}(\zeta,x,x_0,\alpha) \\
u_{\pm,2}(\zeta,x,x_0,\alpha)  \end{pmatrix}
= \Psi(\zeta,x,x_0,\alpha)\begin{pmatrix} I_m \\
M^D_{\pm}(\zeta,x_0,\alpha) \end{pmatrix}  \no \\
&=\begin{pmatrix}\vartheta_1(\zeta,x,x_0,\alpha)
& \varphi_1(\zeta,x,x_0,\alpha)\\
\vartheta_2(\zeta,x,x_0,\alpha)
& \varphi_2(\zeta,x,x_0,\alpha)\end{pmatrix}
\begin{pmatrix} I_m \\
M^D_{\pm}(\zeta,x_0,\alpha) \end{pmatrix}.      \lb{2.13}
\end{align}
Here $M^D_{\pm}(\zeta,x_0,\alpha)$ represents an $m\times m$ matrix, and $\Psi(\zeta,x,x_0,\alpha)$, $\vartheta_j(\zeta,x,x_0,\alpha)$, and
$\varphi_j(\zeta,x,x_0,\alpha)$, $j=1,2$, are defined as follows: 
$\Psi(\zeta,x,x_0,\alpha)$ satisfies $\cD \Psi = \zeta \Psi$ a.e.\ on $\bbR$, normalized such that 
\begin{equation}\lb{2.14}
\Psi(\zeta,x_0,x_0,\alpha)=(\alpha^* \; J\alpha^*)=
\begin{pmatrix} \alpha_1^* & -\alpha_2^* \\
\alpha_2^* & \alpha_1^* \end{pmatrix}.
\end{equation}
Partitioning $\Psi(\zeta,x,x_0,\alpha)$ as follows,
\begin{equation}
\Psi(\zeta,x,x_0,\alpha) = 
\begin{pmatrix}\vartheta_1(\zeta,x,x_0,\alpha) &
\varphi_1(\zeta,x,x_0,\alpha)\\
\vartheta_2(\zeta,x,x_0,\alpha)& \varphi_2(\zeta,x,x_0,\alpha)
\end{pmatrix},\lb{2.15}
\end{equation}
defines $\vartheta_j(\zeta,x,x_0,\alpha)$ and $\varphi_j(\zeta,x,x_0,\alpha)$, 
$j=1,2$, as $m\times m$ matrices, entire with
respect to $\zeta\in\bbC$, and normalized according to \eqref{2.14}.

The matrices $M^D_{\pm}(\zeta,x_0,\alpha)$ represent the sought after half-line Weyl--Titchmarsh matrices associated with the Dirac-type operator 
$D$, whose basic properties can be summarized as follows:

%%%%%%%%%%%%%%%%%%%%%%%
\begin{theorem}
[\cite{AN76}, \cite{AD56}, \cite{Ca76}, \cite{CG02}, \cite{GT00}, \cite{HS81},
\cite{HS82}, \cite{HS86}, \cite{KS88}] \lb{t2.3} ${}$ \\ 
Suppose Hypothesis \ref{h2.1}, let $\zeta\in\bbC\backslash\bbR$,
$x_0\in\bbR$, and denote by $\alpha, \gamma\in\bbC^{m\times 2m}$ matrices satisfying \eqref{2.7}.\ Then the following hold: \\
$(i)$ $\pm M^D_{\pm}(\,\cdot\,,x_0,\alpha)$ is an $m\times m$ matrix-valued Nevanlinna--Herglotz function of maximal rank $m$. In particular,
\begin{align}
& \Im(\pm M^D_{\pm}(\zeta,x_0,\alpha)) \geq 0, \quad \zeta \in \bbC_+, \\
& M^D_{\pm}(\overline \zeta,x_0,\alpha)=M^D_{\pm}(\zeta,x_0,\alpha)^*, 
\lb{2.17} \\
& \rank (M^D_{\pm}(\zeta,x_0,\alpha))=m,  \lb{2.18}\\
& \lim_{\varepsilon\downarrow 0} M^D_{\pm}(\nu+
i\varepsilon,x_0,\alpha) \text{ exists for a.e.\ $\nu\in\bbR$},\lb{2.19}\\
& M^D_{\pm}(\zeta,x_0,\alpha) = [-\alpha J \gamma^* +
\alpha\gamma^* M^D_{\pm}(\zeta,x_0,\gamma)] [ \alpha\gamma^*
+  \alpha J \gamma^*M^D_{\pm}(\zeta,x_0,\gamma)]^{-1}.   \lb{2.20} 
\end{align}
Local singularities of $\pm M^D_{\pm}(\,\cdot\,,x_0,\alpha)$ and 
$\mp M^D_{\pm}(\,\cdot\,,x_0,\alpha)^{-1}$ are necessarily real and at most of first
order in the sense that 
\begin{align}
&\mp \lim_{\epsilon\downarrow0}
\left(i\epsilon\,
M^D_{\pm}(\nu+i\epsilon,x_0,\alpha)\right) \geq 0, \quad 
\pm \lim_{\epsilon\downarrow0}
\big(i\epsilon \, M^D_{\pm}(\nu+i\epsilon,x_0,\alpha)^{-1}\big)
\geq 0, \quad \nu \in \bbR. 
\lb{2.21}  
\end{align}
$(ii)$  $\pm M^D_{\pm}(\,\cdot\,,x_0,\alpha)$ admits the representation 
\begin{equation}
\pm M^D_{\pm}(\zeta,x_0,\alpha)=F_\pm(x_0,\alpha)+\int_\bbR
d\Omega_\pm^D (\nu,x_0,\alpha) \,
\big[(\nu - \zeta)^{-1}-\nu(1+\nu^2)^{-1}\big], \lb{2.22} 
\end{equation}
where
\begin{equation}
F_\pm(x_0,\alpha)=F_\pm(x_0,\alpha)^*, \quad \int_\bbR
\big\|d\Omega^D_{\pm} (\nu,x_0,\alpha)\big\|_{\bbC^{m\times m}} \,
(1+\nu^2)^{-1}<\infty. 
\end{equation}
Moreover,
\begin{equation}
\Omega_{\pm}^D((\mu,\nu],x_0,\alpha)
=\lim_{\delta\downarrow
0}\lim_{\varepsilon\downarrow 0}\f1\pi
\int_{\mu+\delta}^{\nu+\delta} d\nu' \, \Im\big(\pm
M^D_{\pm}(\nu'+i\varepsilon,x_0,\alpha)\big).
\end{equation}
$(iii)$ $\Im\big(M^D_{\pm}(\,\cdot\,,x_0,\alpha)\big)$ satisfies  
\begin{align}
\Im\big(M^D_{\pm}(\zeta,x_0,\alpha)\big) &= \Im(\zeta) \int_{x_0}^{\pm\infty} dx \,
U_\pm(\zeta,x,x_0,\alpha)^* U_\pm(\zeta,x,x_0,\alpha)   \no \\
&= \Im(\zeta) \int_{x_0}^{\pm\infty} dx \, \big[
u_{\pm,1}(\zeta,x,x_0,\alpha)^* u_{\pm,1}(\zeta,x,x_0,\alpha)   \lb{2.25} \\
& \hspace*{2.85cm} +
u_{\pm,2}(\zeta,x,x_0,\alpha)^* u_{\pm,2}(\zeta,x,x_0,\alpha) \big].   \no 
\end{align}
\end{theorem}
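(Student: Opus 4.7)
The plan is to recognize Theorem \ref{t2.3} as a special (supersymmetric) case of the general Weyl--Titchmarsh theory for singular $2m$-dimensional Hamiltonian systems developed by Hinton and Shaw in \cite{HS81}, \cite{HS82}, \cite{HS86} and specialized to Dirac-type operators in \cite{CG02}; accordingly, the proof reduces to verifying that the present setting fits into that framework and then importing the conclusions. The only genuinely analytic input is the limit point property of $\cD$ at $\pm\infty$, which is already at our disposal.

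First, I would cast $\cD U = \zeta U$ in canonical Hamiltonian form. A direct computation from \eqref{2.6} gives the equivalent first-order system
\[
J U'(x) = \big(\zeta I_{2m} + B(x)\big) U(x), \quad B(x) = -\begin{pmatrix} 0 & \phi(x) \\ \phi(x) & 0 \end{pmatrix},
\]
with $J$ as in \eqref{2.7} and $B(\cdot)^* = B(\cdot) \in L^1_{\loc}(\bbR)^{2m \times 2m}$ by Hypothesis \ref{h2.1}. The conditions \eqref{2.7}--\eqref{2.11} on $\alpha$ express precisely the admissible Lagrangian boundary data, and \eqref{2.14}--\eqref{2.15} define $\Psi(\zeta,\,\cdot\,,x_0,\alpha)$ as the corresponding symplectic fundamental matrix; the blocks $\vartheta_j, \varphi_j$ are then $\zeta$-entire $m \times m$ matrix-valued functions by standard ODE theory.

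Next, I would invoke the limit point property of $\cD$ at both endpoints, established in \cite[Lemma\ 2.15]{CG02} and already used in the proof of Theorem \ref{t2.2}. In the limit point case, Hinton--Shaw theory produces, for every $\zeta \in \bbC \backslash \bbR$, unique $m \times m$ matrices $M^D_\pm(\zeta,x_0,\alpha)$ such that the columns of $U_\pm(\zeta,\,\cdot\,,x_0,\alpha) = \Psi(\zeta,\,\cdot\,,x_0,\alpha)\begin{pmatrix} I_m \\ M^D_\pm \end{pmatrix}$ span the deficiency subspaces \eqref{2.12}. Items (i) and (ii) then become standard matrix-valued Nevanlinna--Herglotz facts imported from \cite{AN76}, \cite{AD56}, \cite{CG02}, \cite{GT00}: the hermitian symmetry \eqref{2.17} and the strict positivity of $\Im(\pm M^D_\pm)$ follow from (iii) together with the linear independence of the columns of $U_\pm$; the maximal rank \eqref{2.18} and the bounds \eqref{2.21} on singularities encode the general structure of matrix Herglotz functions arising from self-adjoint boundary value problems; the Fatou-type boundary values \eqref{2.19} and the integral representation \eqref{2.22} together with the Stieltjes inversion formula are standard for matrix Nevanlinna--Herglotz functions; and the linear fractional transformation \eqref{2.20} follows because $U_\pm(\zeta,\,\cdot\,,x_0,\alpha)$ and $U_\pm(\zeta,\,\cdot\,,x_0,\gamma)$ differ only by right multiplication by an invertible $m \times m$ matrix, which one evaluates at $x_0$ using \eqref{2.14}.

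The core computation is item (iii), which I would obtain from the Lagrange identity for the Hamiltonian system: a direct calculation using the coefficient matrix $M_\zeta = \begin{pmatrix} -\phi & \zeta I_m \\ -\zeta I_m & \phi \end{pmatrix}$ (so that $U' = M_\zeta U$ is equivalent to $\cD U = \zeta U$) yields $M_\zeta^* J + J M_\zeta = (\zeta - \overline{\zeta}) I_{2m}$, hence for $V = U_\pm(\zeta,\,\cdot\,,x_0,\alpha)$,
\[
\frac{d}{dx}\big(V(x)^* J V(x)\big) = 2 i\, \Im(\zeta)\, V(x)^* V(x).
\]
Integrating from $x_0$ to $x_0 \pm R$ and letting $R \to \infty$ kills the boundary term at $\pm\infty$, since the columns of $U_\pm$ lie in $L^2$ there and the limit point property forces the ensuing limit of $V^* J V$ to vanish. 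The boundary term at $x_0$ is computed from \eqref{2.13}--\eqref{2.15} together with \eqref{2.8}--\eqref{2.11} as $V(x_0)^* J V(x_0) = -2 i\, \Im(M^D_\pm(\zeta,x_0,\alpha))$, yielding precisely \eqref{2.25}. The chief obstacle is the limit point property at $\pm\infty$, but since that has been cited, the remainder is careful bookkeeping in the Hinton--Shaw framework.
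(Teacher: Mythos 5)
Your proposal takes essentially the same approach as the paper: the paper presents Theorem~\ref{t2.3} as a citation theorem (attributed to \cite{AN76}, \cite{AD56}, \cite{Ca76}, \cite{CG02}, \cite{GT00}, \cite{HS81}, \cite{HS82}, \cite{HS86}, \cite{KS88}) with no written proof, relying on the fact that $\cD$ is a special case of the singular Hamiltonian systems treated by Hinton--Shaw and specialized in \cite{CG02}. You recognize exactly this and, in addition, explicitly verify the reduction to canonical Hamiltonian form, the limit point input, and the Lagrange identity yielding part~(iii), all of which is correct (your sign for $B$ is the one consistent with \eqref{4.68}; the sign printed in \eqref{5.10.3} is a typographical slip in the paper). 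The only thing worth flagging is that in \eqref{2.20} one also needs invertibility of the matrix $\alpha\gamma^* + \alpha J \gamma^* M^D_\pm(\zeta,x_0,\gamma)$; this follows from \eqref{2.18} and the Lagrangian conditions \eqref{2.7}--\eqref{2.11}, and is part of what the cited Hinton--Shaw/CG02 framework supplies, so your appeal to evaluating at $x_0$ via \eqref{2.14} is in order but could be made more explicit.
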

%%%%%%%%%%% 

For completeness we also recall that the $2m\times 2m$ Green's matrix (i.e., the integral kernel of the resolvent) of $D$ is given in terms of $U_\pm$ and $M_{\pm}$ by
\begin{align}
& G^D(\zeta,x,x^\prime) = (D - \zeta I)^{-1}(x,x')   \no \\
& \quad = U_\mp(\zeta,x,x_0,\alpha) \big[M^D_-(\zeta,x_0,\alpha) 
-M^D_+(\zeta,x_0,\alpha)\big]^{-1} U_\pm(\overline \zeta,x^\prime,x_0,\alpha)^*, 
\lb{2.26} \\
& \hspace*{6.4cm}  x\lessgtr x^\prime, \; x, x' \in\bbR, \; 
\zeta\in\bbC\backslash\bbR.  \no 
\end{align}
Of course, $G^D(\zeta,x,x^\prime)$ is independent of the choice of reference point $x_0 \in \bbR$, and independent of the boundary condition parameter $\alpha$ satisfying \eqref{2.7} used in 
$M^D_{\pm}(\zeta,x_0,\alpha)$ and $U_\pm(\zeta,\cdot\,,x_0,\alpha)$. One also notes that \eqref{2.26} extends as usual to all 
$\zeta \in \rho(D)$. In the particular case $\alpha_0=(I_m\; 0)$ one obtains
\begin{equation}
U_\pm(\zeta,x_0,x_0,\alpha_0) 
= \begin{pmatrix}u_{\pm,1}(\zeta,x_0,x_0,\alpha_0) \\
u_{\pm,2}(\zeta,x_0,x_0,\alpha_0)  \end{pmatrix}
= \begin{pmatrix} I_m \\ M^D_{\pm}(\zeta,x_0,\alpha_0) \end{pmatrix}.  
\lb{2.27}
\end{equation}

The self-adjoint half-line Dirac operators $D_{\pm} (\alpha)$ in 
$L^2([x_0,\pm\infty))^{2m}$ associated with a
self-adjoint boundary condition at $x_0$ indexed by
$\alpha\in\bbC^{m\times 2m}$ satisfying \eqref{2.7}, are of the form 
\begin{align}
& (D_{\pm} (\alpha) U)(x) = (\cD U)(x)  \text{ for a.e.\ $x \in [x_0,\pm\infty)$,}     
\no \\
& \, U \in \dom(D_{\pm} (\alpha)) = \big\{V \in L^2([x_0, \pm \infty))^{2m} 
\, \big| \, 
V \in AC([x_0, x_0 \pm R])^{2m}     \lb{2.28} \\
& \hspace*{3.3cm} \text{ for all $R > 0$}; \, \alpha\phi(x_0)=0; \, \cD V \in 
L^2([x_0, \pm \infty))^{2m} \big\}.    \no
\end{align}
The $m\times m$ matrix-valued spectral function of $D_{\pm} (\alpha)$  then generates the 
measure $\Omega^D_\pm(\,\cdot\,,x_0,\alpha)$ in \eqref{2.22}.

We conclude this section with a brief description of the full-line 
$2m\times 2m$ Weyl--Titchmarsh matrix $\mathbf{M}^D(\zeta,x_0,\alpha)$ associated with $D$ as described 
in \cite{HS81}--\cite{HS86}: 
\begin{align}
\mathbf{M}^D (\zeta,x_0,\alpha)
&=\big(\mathbf{M}^D_{j,j^\prime}(\zeta,x_0,\alpha)\big)_{j,j^\prime=1,2},  \quad 
\zeta\in\bbC\backslash\bbR,    \no \\
\mathbf{M}^D_{0,0}(\zeta,x_0,\alpha)&=[M^D_-(\zeta,x_0,\alpha)-M^D_+(\zeta,x_0,\alpha)]^{-1},
\no \\ 
\mathbf{M}^D_{0,1}(\zeta,x_0,\alpha)&=2^{-1}
[M^D_-(\zeta,x_0,\alpha) - M^D_+(\zeta,x_0,\alpha)]^{-1}  \no \\
& \quad \times [M^D_-(\zeta,x_0,\alpha)+M^D_+(\zeta,x_0,\alpha)],     \lb{2.29} \\
\mathbf{M}^D_{1,0}(\zeta,x_0,\alpha)&=2^{-1} [M^D_-(\zeta,x_0,\alpha)+M^D_+(\zeta,x_0,\alpha)]  \no \\
& \quad \times [M^D_-(\zeta,x_0,\alpha) - M^D_+(\zeta,x_0,\alpha)]^{-1},\no \\
\mathbf{M}^D_{1,1}(\zeta,x_0,\alpha)&=M^D_\pm(\zeta,x_0,\alpha)
[M^D_-(\zeta,x_0,\alpha)-M^D_+(\zeta,x_0,\alpha)]^{-1}M^D_\mp(\zeta,x_0,\alpha). \no 
\end{align}

The basic results on $\mathbf{M}^D(\,\cdot\,,x_0,\alpha)$ then read as follows.

%%%%%%%%%
\begin{theorem} [\cite{GT00}, \cite{HS81}, \cite{HS82}, 
\cite{HS86}, \cite{KS88}] \lb{t2.4} 
Assume Hypothesis \ref{h2.1} and suppose that $\zeta\in\bbC 
\backslash \bbR$, $x_0\in\bbR$, and that $\alpha\in\bbC^{m\times 2m}$
satisfies \eqref{2.7}.  Then the following hold: \\
$(i)$ $\mathbf{M}^D(\,\cdot\,,x_0,\alpha)$ is a matrix-valued Nevanlinna--Herglotz function of maximal rank 
$2m$ with representation
\begin{equation}
\mathbf{M}^D(\zeta,x_0,\alpha)=\mathbf{F}(x_0,\alpha)+\int_\bbR d\mathbf{\Omega}^D (\nu,x_0,\alpha)\,
\big[(\nu - \zeta)^{-1}-\nu (1+\nu^2)^{-1}\big], \lb{2.30} 
\end{equation}
where
\begin{equation}
\mathbf{F}(x_0,\alpha)=\mathbf{F}(x_0,\alpha)^*, \quad \int_\bbR \big\| d\mathbf{\Omega}^D (\nu,x_0,\alpha)
\big\|_{\bbC^{2m\times 2m}} \,(1+\nu^2)^{-1}<\infty.   \lb{2.31} 
\end{equation}
Moreover,
\begin{equation}
\mathbf{\Omega}^D ((\mu,\nu],x_0,\alpha)=\lim_{\delta\downarrow
0}\lim_{\varepsilon\downarrow 0}\f1\pi
\int_{\mu+\delta}^{\nu+\delta} d\nu' \, 
\Im\big(\mathbf{M}^D(\nu' + i\varepsilon,x_0,\alpha)\big).     \lb{2.32} 
\end{equation}
$(ii)$ $\zeta\in\rho(D)$ if and only if $\mathbf{M}^D(\zeta,x_0,\alpha)$ is
holomorphic near $\zeta$. 
\end{theorem}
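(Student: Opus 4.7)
My plan is to exploit the block decomposition of $\mathbf{M}^D(\zeta,x_0,\alpha)$ given in \eqref{2.29} together with the Nevanlinna--Herglotz properties of the half-line matrices $M^D_\pm(\zeta,x_0,\alpha)$ from Theorem \ref{t2.3}, and to connect the full-line Weyl matrix with the resolvent of $D$ as recorded in \eqref{2.26}. In particular, the block $\mathbf{M}^D_{0,0}$ is well-defined on $\bbC\setminus\bbR$ because $\Im\bigl(M^D_-(\zeta,x_0,\alpha)-M^D_+(\zeta,x_0,\alpha)\bigr)$ is strictly negative definite for $\zeta \in \bbC_+$ (owing to the opposing signs of $\Im(M^D_\pm)$ stated in Theorem \ref{t2.3}(i)), and the other blocks inherit their holomorphy in $\bbC\setminus\bbR$ from that of $M^D_\pm$.

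For part $(i)$, the Herglotz property is established by deriving an identity of the form
\[
\Im\bigl(\mathbf{M}^D(\zeta,x_0,\alpha)\bigr) = \Im(\zeta) \int_\bbR dx\, \cU(\zeta,x,x_0,\alpha)^* \cU(\zeta,x,x_0,\alpha),
\]
where $\cU(\zeta,\cdot,x_0,\alpha)$ is the $2m\times 2m$ matrix of full-line Weyl solutions built from $U_\pm$ by appropriate blockwise matching at $x_0$ so that the columns are $L^2$ on all of $\bbR$. The derivation splits the integral at $x_0$, applies the half-line identity \eqref{2.25} on each side, and uses the algebraic definitions in \eqref{2.29} to collect the result. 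This identity immediately yields $\Im(\mathbf{M}^D(\zeta,x_0,\alpha)) > 0$ on $\bbC_+$; maximal rank $2m$ then follows because any null vector $v$ of $\Im(\mathbf{M}^D(\zeta,x_0,\alpha))$ forces $\cU(\zeta,\cdot,x_0,\alpha)v \equiv 0$, which via the boundary normalization \eqref{2.14} and the invertibility of the matrix in \eqref{2.10} contradicts $v\neq 0$. The Herglotz representation \eqref{2.30}, the bounds \eqref{2.31}, and the Stieltjes inversion \eqref{2.32} are then immediate consequences of the standard matrix-valued Nevanlinna--Herglotz theory (cf.\ \cite{GT00}).

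For part $(ii)$, the implication $\zeta \in \rho(D) \Rightarrow \mathbf{M}^D$ holomorphic near $\zeta$ follows by analytic continuation using the resolvent formula \eqref{2.26}: boundedness of $(D-\zeta I)^{-1}$ forces $[M^D_-(\zeta,x_0,\alpha)-M^D_+(\zeta,x_0,\alpha)]^{-1}$ and the individual $M^D_\pm(\zeta,x_0,\alpha)$ to extend analytically across $\zeta$, hence each block in \eqref{2.29} does. Conversely, holomorphy of $\mathbf{M}^D$ near $\zeta$ implies both holomorphy of $M^D_\pm$ near $\zeta$ (they are recovered rationally from $\mathbf{M}^D_{0,0}$ and $\mathbf{M}^D_{0,1}, \mathbf{M}^D_{1,0}$ via the block relations in \eqref{2.29}) and invertibility of $M^D_-(\zeta)-M^D_+(\zeta)$. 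Substituting these into \eqref{2.26} produces a bounded integral operator on $L^2(\bbR)^{2m}$ which, by a variation-of-parameters verification that it satisfies the resolvent equation for $D$, equals $(D-\zeta I)^{-1}$; hence $\zeta\in\rho(D)$.

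The main obstacle will be verifying the $\Im(\mathbf{M}^D)$ identity cleanly, since the two half-line contributions come with opposite signs ($\Im(M^D_+) > 0$ while $\Im(-M^D_-) > 0$) and one must carefully track the Wronskian boundary terms at $x_0$ to show that the blockwise combination collapses to the single positive-definite integral displayed above. A related subtlety in part $(ii)$ is justifying that $M^D_\pm$, a priori only defined on $\bbC\setminus\bbR$, admit analytic continuations to real points of $\rho(D)$; this uses the limit point property at $\pm\infty$ recorded in the proof of Theorem \ref{t2.2} together with the Weyl disk characterization underlying Theorem \ref{t2.3}, to ensure that the Weyl solutions $U_\pm$ themselves extend analytically across such points.
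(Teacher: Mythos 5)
The paper does not prove Theorem~\ref{t2.4}: it is stated as a known result and attributed to the Hinton--Shaw Hamiltonian-systems literature and to~\cite{GT00}, \cite{KS88}, with no internal argument supplied. There is therefore no paper proof to compare your attempt against; the comparison can only be against the cited sources.

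Your sketch is a reasonable standalone route that matches the standard strategy in those references: derive the Herglotz property of $\mathbf{M}^D$ from the block formulas \eqref{2.29} together with the half-line imaginary-part identity \eqref{2.25}, then invoke matrix Nevanlinna--Herglotz theory for \eqref{2.30}--\eqref{2.32}, and use the resolvent formula \eqref{2.26} for part~$(ii)$. A few points that would need to be tightened if this were to stand as a proof. First, the object $\cU(\zeta,\cdot,x_0,\alpha)$ cannot consist of columns that are both genuine solutions of $\cD U=\zeta U$ and square-integrable over all of $\bbR$: for nonreal $\zeta$, self-adjointness of $D$ rules out nontrivial $L^2(\bbR)$-solutions. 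What one actually gets is a \emph{piecewise} kernel, equal to $U_\pm(\zeta,\cdot,x_0,\alpha)$ times suitable $m\times 2m$ coefficient matrices on $\pm(x-x_0)>0$; this is an $L^2$ object but not a global solution, and the verification that
$\Im\big(\mathbf{M}^D(\zeta,x_0,\alpha)\big)=\Im(\zeta)\int_\bbR \cU(\zeta,x)^*\cU(\zeta,x)\,dx$
is a genuine block-by-block computation from \eqref{2.25}, \eqref{2.29}, not an automatic ``collapse''. Second, the maximal-rank conclusion from a null vector $v$ of $\Im(\mathbf{M}^D)$ needs you to show that the coefficient matrices multiplying $U_+$ and $U_-$ together have trivial kernel; invoking \eqref{2.14} and \eqref{2.10} is on the right track but is not by itself a complete argument. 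Third, for part~$(ii)$, the forward implication hinges on extending $M^D_\pm$, and hence $U_\pm$, analytically across real points of $\rho(D)$; you correctly flag that the limit-point property and the Weyl-disk/circle apparatus are what make this possible, but as stated this is an assertion rather than a proof. None of these are unrecoverable, but they are the nontrivial content that the cited sources supply.
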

%%%%%%%%%%

Finally, observe that supersymmetry implies various symmetries for the associated quantities.

%%%%%%%%%%
\begin{lemma}
Assume Hypothesis \ref{h2.1}. The operators $D_\pm(\alpha_0)$ are supersymmetric and satisfy $\mathfrak{S}_3 D_\pm(\alpha_0) \mathfrak{S}_3 = -D_\pm(\alpha_0)$, where
\begin{equation}
\mathfrak{S}_3 = \begin{pmatrix} I_m & 0 \\ 0 & -I_m \end{pmatrix}.
\end{equation} 
Moreover,
\begin{align}
& \mathfrak{S}_3 \big(\varphi_1(\zeta,x,x_0,\alpha_0) \ \ \varphi_2(\zeta,x, x_0,\alpha_0)\big)^\top
 = - \big(\varphi_1(-\zeta,x, x_0,\alpha_0) \ \ \varphi_2(-\zeta,x, x_0,\alpha_0)\big)^\top, \no\\
& \mathfrak{S}_3 \big(\vartheta_1(\zeta,x, x_0,\alpha_0) \ \ \vartheta_2(\zeta,x, x_0,\alpha_0)\big)^\top
 = \big(\vartheta_1(-\zeta,x, x_0,\alpha_0) \ \ \vartheta_2(-\zeta,x, x_0,\alpha_0)\big)^\top, \no\\
& \mathfrak{S}_3 U_\pm(\zeta,x,x_0,\alpha_0) = U_\pm(-\zeta,x,x_0,\alpha_0),\\
& M_\pm^D(\zeta,x_0,\alpha_0) = - M_\pm^D(-\zeta,x_0,\alpha_0), \no\\
& d\Omega_\pm^D(\nu,x_0,\alpha_0)= d\Omega_\pm^D(-\nu,x_0,\alpha_0). \no
\end{align}
Similarly, $D$ is supersymmetric, $\mathfrak{S}_3 D \mathfrak{S}_3 = -D$, and
\begin{align}
\begin{split} 
& \mathbf{M}^D(\zeta,x_0,\alpha_0) = - \mathfrak{S}_3 \mathbf{M}^D(-\zeta,x_0,\alpha_0) \mathfrak{S}_3,  \\
& d\mathbf{\Omega}^D(\nu,x_0,\alpha_0)= \mathfrak{S}_3 d\mathbf{\Omega}^D(-\nu,x_0,\alpha_0) \mathfrak{S}_3. 
\end{split} 
\end{align}
\end{lemma}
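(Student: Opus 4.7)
The plan is to reduce every assertion to the single algebraic identity $\mathfrak{S}_3\mathcal{D}\mathfrak{S}_3=-\mathcal{D}$ (as a differential expression), together with the uniqueness of initial-value and limit-point $L^2$-solutions, and then push the resulting symmetries through the definitions of $M_\pm^D$, $\mathbf{M}^D$, and the spectral measures.

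First I would verify the operator-level supersymmetry. Writing out $\mathcal{D}$ in block form and conjugating by $\mathfrak{S}_3$ is an immediate $2\times 2$ block calculation that yields $\mathfrak{S}_3\mathcal{D}\mathfrak{S}_3=-\mathcal{D}$, and hence $\mathfrak{S}_3 D\mathfrak{S}_3=-D$ once one notes that $\mathfrak{S}_3$ is a unitary involution on $L^2(\bbR)^{2m}$ preserving $\dom(D)=\dom(A)\oplus\dom(A^*)$. For the half-line operators, the boundary condition $\alpha_0 U(x_0)=(I_m\;0)(u_1\;u_2)^\top(x_0)=u_1(x_0)=0$ is obviously preserved under $U\mapsto \mathfrak{S}_3U$, so the same conjugation identity restricts to $D_\pm(\alpha_0)$.

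Next I turn to the solutions. Since $\mathcal{D}(\mathfrak{S}_3V)=-\mathfrak{S}_3\mathcal{D}V$, the columns of $\mathfrak{S}_3\Psi(\zeta,\cdot,x_0,\alpha_0)$ satisfy $\mathcal{D}V=-\zeta V$. With $\alpha_0=(I_m\;0)$ the normalization \eqref{2.14} reads $\Psi(\zeta,x_0,x_0,\alpha_0)=I_{2m}$, giving $(\vartheta_1,\vartheta_2)^\top|_{x_0}=(I_m,0)^\top$ and $(\varphi_1,\varphi_2)^\top|_{x_0}=(0,I_m)^\top$. Applying $\mathfrak{S}_3$ to these initial vectors produces $(I_m,0)^\top$ and $(0,-I_m)^\top$, respectively, whereas $\Psi(-\zeta,x_0,x_0,\alpha_0)=I_{2m}$ produces $(I_m,0)^\top$ and $(0,I_m)^\top$. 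Uniqueness of the initial value problem for $\mathcal{D}V=-\zeta V$ then yields the two claimed identities for $(\vartheta_1,\vartheta_2)^\top$ (sign $+$) and $(\varphi_1,\varphi_2)^\top$ (sign $-$). For $U_\pm$, by Theorem~\ref{t2.2} the expression $\mathcal{D}$ is limit point at $\pm\infty$, so $\mathfrak{S}_3 U_\pm(\zeta,\cdot,x_0,\alpha_0)$ and $U_\pm(-\zeta,\cdot,x_0,\alpha_0)$ both span the one-sided $m$-dimensional $L^2$-solution space of $\mathcal{D}V=-\zeta V$; they must therefore agree up to a right multiplication by some $C\in\bbC^{m\times m}$. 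Evaluating at $x_0$ using \eqref{2.27} gives $C=I_m$ from the first block and, simultaneously, $M_\pm^D(\zeta,x_0,\alpha_0)=-M_\pm^D(-\zeta,x_0,\alpha_0)$ from the second block.

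Finally, the statements about the measures and $\mathbf{M}^D$ are pure corollaries. From $M_\pm^D(\zeta,x_0,\alpha_0)=-M_\pm^D(-\zeta,x_0,\alpha_0)$ combined with the Hermitian-symmetry \eqref{2.17}, I get $\Im\bigl(\pm M_\pm^D(\nu+i\varepsilon,x_0,\alpha_0)\bigr)=\Im\bigl(\pm M_\pm^D(-\nu+i\varepsilon,x_0,\alpha_0)\bigr)$, and the Stieltjes inversion formula in Theorem~\ref{t2.3}(ii) then forces $d\Omega_\pm^D(\nu,x_0,\alpha_0)=d\Omega_\pm^D(-\nu,x_0,\alpha_0)$. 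Substituting the sign-flip identity into each of the four block entries in \eqref{2.29} and tracking the signs shows that the diagonal blocks $\mathbf{M}_{0,0}^D,\mathbf{M}_{1,1}^D$ change sign under $\zeta\mapsto-\zeta$ while the off-diagonal blocks $\mathbf{M}_{0,1}^D,\mathbf{M}_{1,0}^D$ stay invariant; this is exactly $\mathbf{M}^D(\zeta,x_0,\alpha_0)=-\mathfrak{S}_3\mathbf{M}^D(-\zeta,x_0,\alpha_0)\mathfrak{S}_3$. The corresponding measure identity follows by the same Stieltjes inversion argument applied to \eqref{2.32}. The only place that requires care is the block-level bookkeeping for $\mathbf{M}^D$, where one must keep track of which factors in $\mathbf{M}_{0,1}^D$ and $\mathbf{M}_{1,0}^D$ contribute a minus sign so that two cancellations leave these off-diagonal entries invariant; this is the main (if minor) obstacle.
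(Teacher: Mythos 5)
Your proof is correct and follows essentially the same route as the paper's (much terser) argument: supersymmetry from the block anticommutation $\mathfrak{S}_3\cD\mathfrak{S}_3=-\cD$, the solution symmetries from uniqueness of solutions with prescribed data at $x_0$ (respectively, from the limit-point property for $U_\pm$), and the remaining identities by substitution and Stieltjes inversion. Your version simply spells out the details (the evaluation at $x_0$ via \eqref{2.27}, the sign bookkeeping in \eqref{2.29}, and the passage to the measures) that the paper leaves implicit.
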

%%%%%%%%%%
\begin{proof}
Clearly $D_\pm(\alpha_0)$ are of the form \eqref{A.2} in this case and hence they are supersymmetric. Moreover,
the symmetries for the solutions follow since both sides satisfy the same differential equation and the same initial
conditions, respectively normalizations. The claims for $D$ are immediate from the ones for $D_\pm(\alpha_0)$.
\end{proof}
%%%%%%%%%%

%%%%%%%%%%%%%%%%%%%%%%%%%%%%%%%%%%%%%%
%%%%%%%%%%%%%%%%%%%%%%%%%%%%%%%%%%%%%%
\section{Supersymmetry and the connection between the Weyl--Titchmarsh Matrices for Dirac and Generalized Schr\"odinger-Type Operators}  \lb{s3}
%%%%%%%%%%%%%%%%%%%%%%%%%%%%%%%%%%%%%%
%%%%%%%%%%%%%%%%%%%%%%%%%%%%%%%%%%%%%%

In our principal section we provide the connection with the matrix-valued Weyl--Titchmarsh functions of the supersymmetric Dirac-type operator $D$ described in Section \ref{s2} and two naturally associated generalized 
Schr\"odinger-type operators $H_j$, $j =1,2$, given by 
\begin{equation} 
D^2 = \begin{pmatrix} A^* A & 0 \\ 0 & A A^* \end{pmatrix} = 
H_1 \oplus H_2 \, \text{ in } \, 
L^2(\bbR)^{2m} \simeq L^2(\bbR)^{m} \oplus L^2(\bbR)^{m},   \lb{3.1}
\end{equation}
in particular, we denote 
\begin{equation}
H_1 = A^* A, \quad  H_2 = A A^*,      \lb{3.2}
\end{equation}
with $A$ and $A^*$ given by \eqref{2.1} and \eqref{2.2}, respectively (cf.\ 
Theorem \ref{t2.2}). 

While $D$ contains the locally integrable $m \times m$ matrix-valued coefficient 
$\phi$, the associated generalized Schr\"odinger operators $H_j$, $j=1,2$, will exhibit distributional potentials and hence are outside the standard Weyl--Titchmarsh theory for Sturm--Liouville operators with locally integrable $m \times m$ matrix-valued potentials. Our supersymmetric approach will enable us to make the transition from the usual $L^1_{\loc}$-potentials in Schr\"odinger operators to distributional 
$H^{-1}_{\loc}$-potentials (and more general situations) in an effortless manner, thereby underscoring the power of these supersymmetric arguments.

To describe $H_j$, $j=1,2$, in $L^2(\bbR)$ in detail, we first introduce the following two kinds of quasi-derivatives,
\begin{align}
u^{[1,1]} (x) &= (A u)(x) = u'(x) + \phi(x) u(x)  \text{ for a.e.\ $x \in \bbR$,} 
\quad u \in \dom(A),     \lb{3.3} \\
v^{[1,2]} (x) & = - (A^* v)(x) = v'(x) - \phi(x) v(x)  \text{ for a.e.\ 
$x \in \bbR$,} \quad v \in \dom(A^*).     \lb{3.4}    
\end{align}
Thus, one infers,
\begin{align}
& (H_1 u)(x) = (A^* A u)(x) = (\tau_1 u)(x) 
= - \big(u^{[1,1]}\big)'(x) + \phi(x) u^{[1,1]} (x) 
  \text{ for a.e.\ $x \in \bbR$,}    \no \\
& \, u \in \dom(H_1) = \big\{v \in L^2(\bbR)^m \, \big| \, v, v^{[1,1]} \in 
AC_{\loc}(\bbR)^m; \lb{3.5}\\
&\hspace*{3.65cm} \big[\big(v^{[1,1]}\big)' + \phi v^{[1,1]}\big] \in 
L^2(\bbR)^m\big\},\no
\end{align}
and
\begin{align} 
& (H_2 u)(x) = (A A^* u)(x) = (\tau_2 u)(x)
= - \big(u^{[1,2]}\big)'(x) - \phi(x) u^{[1,2]} (x) 
  \text{ for a.e.\ $x \in \bbR$,}    \no \\
& \, u \in \dom(H_2) = \big\{v \in L^2(\bbR)^m \, \big| \, v, v^{[1,2]} \in 
AC_{\loc}(\bbR)^m;\lb{3.6} \\
&\hspace*{3.65cm} \big[\big(v^{[1,2]}\big)' + \phi v^{[1,2]}\big] \in 
L^2(\bbR)^m\big\}.\no
\end{align}
  
Formally, $\tau_j$, $j=1,2$, are of the form
\begin{align}
\tau_j = - I_m \f{d^2}{dx^2} + V_j(x), \quad 
V_j(x) = \phi(x)^2 + (-1)^{j} \phi'(x), \quad j=1,2,
\end{align}  
but one notices that, in general, neither $\phi^2$ is locally integrable (unless one makes the stronger assumption $\phi \in L^2_{\loc} (\bbR)^{m \times m}$), nor is $\phi'$ a function (unless one assumes in addition that 
$\phi \in AC_{\loc} (\bbR)^{m \times m}$). 

By inspection, the second-order initial value problems, 
\begin{align}
\begin{split}
& ((\tau_j - z) f)(x) = g(x) \text{ for a.e.\ $x \in \bbR$,} \quad 
f, f^{[1,j]} \in AC_{\loc}(\bbR)^m, \; g \in L^1_{\loc} (\bbR)^m, \\
& \, f(x_0) = c_0, \;\, f^{[1,j]}(x_0) = d_0,\, j=1,2,    \lb{3.8}
\end{split} 
\end{align} 
for some $x_0 \in \bbR$, $c_0, d_0 \in \bbC$, 
are equivalent to the first-order initial value problems  
\begin{align}
& \begin{pmatrix} f(x) \\ f^{[1,j]}(x) \end{pmatrix}^{\prime} = 
\begin{pmatrix} (-1)^j \phi(x) & 1 \\ -z & (-1)^{j+1}\phi(x) \end{pmatrix} 
\begin{pmatrix} f(x) \\ f^{[1,j]}(x) \end{pmatrix} - 
\begin{pmatrix} 0 \\ g(x) \end{pmatrix}  \text{ for a.e.\ $x \in \bbR$,}  
\no \\
& \begin{pmatrix} f(x_0) \\ f^{[1,j]}(x_0) \end{pmatrix} 
= \begin{pmatrix} c_0 \\ d_0 \end{pmatrix},\quad j=1,2,  \lb{3.10}
\end{align}
respectively. Since by Hypothesis \ref{h2.1}, 
$\phi \in L^1_{\loc} (\bbR)^{m \times m}$, the initial value problems in
\eqref{3.10} (and hence those in \eqref{3.8}) are uniquely solvable by \cite[Theorem\ 16.1]{Na68} 
(see also \cite[Theorem\ 10.1]{EE89} and \cite[Theorem\ 16.2]{Na68}). 

Next, suppose that for some $1\leq p \leq m$, $U = (u_1 \,\, u_2)^\top$ is a distributional $2m \times p$ solution of $D U = \zeta U$, that is, 
\begin{align}
\begin{split}
& u_j \in AC_{\loc}(\bbR)^{m \times p}, \; j=1,2, \\
& u_1^{[1,1]} = A u_1 \in L^1_{\loc}(\bbR)^{m \times p},   \quad 
u_2^{[1,2]} = - A^* u_2  \in L^1_{\loc}(\bbR)^{m \times p}.    \lb{3.12}
\end{split} 
\end{align}
Then, if $\zeta \neq 0$, the supersymmetric structure of $D$ in 
\eqref{2.5} actually implies that also 
\begin{align}
& u_1^{[1,1]} = A u_1 = \zeta u_2 \in AC_{\loc}(\bbR)^{m \times p},   \lb{3.13} \\
& u_2^{[1,2]} = - A^* u_2 = - \zeta u_1 \in AC_{\loc}(\bbR)^{m \times p},  
\lb{3.14} 
\end{align}
and hence that $u_j$ are actually distributional $m \times p$ solutions of $H_j u = \zeta^2 u$, $j=1,2$, that is,
\begin{align}
\begin{split}
& u_j, u_j^{[1,j]} \in AC_{\loc}(\bbR)^{m \times p}, \quad 
\big(u_j^{[1,j]}\big)' \in L^1_{\loc}(\bbR)^{m \times p},  \\  
& \tau_j u_j = - \big(u_j^{[1,j]}\big)' + (-1)^{j+1}\phi u_j^{[1,j]} = \zeta^2 u_j,\quad j=1,2.  \lb{3.15}
\end{split}
\end{align}

Thus, applying the $L^2$-property \eqref{2.25} and 
\eqref{3.12}--\eqref{3.15} to the Weyl--Titchmarsh solutions 
$U_\pm(\zeta,\,\cdot\,,x_0,\alpha)$ associated with the Dirac-type operator $D$, then shows that $u_{\pm,j}(\zeta,\,\cdot\,,x_0,\alpha)$ are 
Weyl--Titchmarsh solutions associated with $H_j$, $j=1,2$, replacing the complex energy parameter $\zeta$ by $z = \zeta^2$. Moreover, 
introducing the following fundamental system 
$s_j (z,\,\cdot\,, x_0), c_j (z,\,\cdot\,, x_0)$, $j=1,2$,  
of $m \times m$ matrix solutions of $\tau_j u = z u$, $z \in \bbC$, $j=1,2$, normalized for arbitrary $z \in \bbC$ by 
\begin{align}
& s_j(z, x_0, x_0) = 0, & s_j^{[1,j]}(z, x_0, x_0) & = I_m,                          \lb{3.17}    \\
& c_j(z, x_0, x_0) = I_m, & c_j^{[1,j]}(z, x_0, x_0) & = 0, \qquad j=1,2,    \lb{3.18} 
\end{align}
one observes as usual that for fixed $x, x_0 \in \bbR$, 
$s_j (\,\cdot\,, x, x_0), c_j (\,\cdot\,, x, x_0)$ are entire. The connection with the solutions $\varphi_j$ and $\vartheta_j$, $j=1,2$, of $D$ is given by
\begin{align}
& s_1(z, x, x_0) = \zeta^{-1} \varphi_1(\zeta,x,x_0, \alpha_0), \quad c_1(z, x, x_0) 
= \vartheta_1(\zeta,x,x_0,\alpha_0), \\
& s_2(z, x, x_0) = \zeta^{-1} \vartheta_2(\zeta,x,x_0,\alpha_0), \quad c_2(z, x, x_0) 
= \varphi_2(\zeta,x,x_0,\alpha_0), \quad z=\zeta^2.
\end{align}
In addition, introducing the 
Weyl--Titchmarsh solutions $\psi_{\pm,j}(z,\, \cdot \, ,x_0)$ for $H_j$, $j=1,2$, via
\begin{align}
\psi_{\pm,1} (z,\,\cdot\,, x_0) &= u_{\pm,1} (\zeta,\,\cdot\,, x_0,\alpha_0), 
\lb{3.21} \\ 
\psi_{\pm,2} (z,\,\cdot\,, x_0) &= u_{\pm,2} (\zeta,\,\cdot\,, x_0,\alpha_0)
M^D_{\pm} (\zeta,x_0,\alpha_0)^{-1},   \lb{3.22} \\
& \hspace*{1.6cm}  z = \zeta^2, \; \zeta \in \bbC \backslash \bbR, \; 
j=1,2,   \no 
\end{align}
(the right-hand sides being independent of the choice of branch for $\zeta$) and the generalized 
Dirichlet-type $m \times m$ matrix-valued Weyl--Titchmarsh functions 
$\hatt M_{\pm,0,j} (\,\cdot\,,x_0)$ of $H_j$, 
\begin{align}
\hatt M_{\pm,0,1} (z,x_0) &= \zeta M^D_{\pm}(\zeta,x_0,\alpha_0),  \lb{3.23} \\
\hatt M_{\pm,0,2} (z,x_0) &= -\zeta M^D_{\pm}(\zeta,x_0,\alpha_0)^{-1}, 
\lb{3.24} \\
& \hspace*{8.7mm}  z = \zeta^2, \; \zeta \in \bbC\backslash\bbR,   \no 
\end{align}
one infers from \eqref{2.27} that
\begin{align}
\psi_{\pm,j} (z,\,\cdot\,, x_0) = c_j (z,\,\cdot\,,x_0) 
+ s_j(z,\,\cdot\,,x_0) \hatt M_{\pm,0,j} (z,x_0), \quad 
z \in \bbC\backslash [0,\infty), \; j=1,2.   \lb{3.25} 
\end{align}
Indeed, \eqref{3.25} follows from combining \eqref{2.27}, \eqref{3.13}, and 
\eqref{3.14} (for $p=m$), which in turn imply 
\begin{align}
 \psi_{\pm,j} (z, x_0, x_0) &= I_m,    \lb{3.26} \\
 \psi_{\pm,j}^{[1,j]} (z, x_0, x_0) &= \hatt M_{\pm,0,j} (z,x_0), \quad 
j=1,2   \lb{3.27}
\end{align}
and the unique solvability of the initial value problems in \eqref{3.8}. We summarize this discussion in the following result:

%%%%%%%%
\begin{theorem} \lb{t3.1}
Assume Hypothesis \ref{h2.1} and let $\alpha_0 = (I_m \; 0)$. Suppose that the corresponding Weyl--Titchmarsh solutions of $D$ are denoted by 
$U_\pm(\zeta,\,\cdot\,,x_0,\alpha_0) 
= (u_{\pm,1}(\zeta,\,\cdot\,,x_0,\alpha_0) \;  
u_{\pm,2}(\zeta,\,\cdot\,,x_0,\alpha_0))^{\top}$ and the corresponding 
$m \times m$ matrix-valued half-line Weyl--Titchmarsh function of $D$ is given by $M^D_{\pm} (\,\cdot\,,x_0,\alpha_0)$. Then the $m \times m$ matrix-valued Weyl--Titchmarsh solutions of $H_j$, denoted by 
$\psi_{\pm,j} (z,\,\cdot\,,x_0)$, $j=1,2$, are given by \eqref{3.21} and 
\eqref{3.22}, and the $m \times m$ matrix-valued generalized Dirichlet-type 
Weyl--Titchmarsh functions $\hatt M_{\pm,0,j} (\,\, \cdot \, ,x_0)$ of $H_j$, $j=1,2$, are given by \eqref{3.23} and \eqref{3.24}. In particular,
\begin{equation}
\hatt M_{\pm,0,1} (z,x_0) = \zeta M^D_{\pm}(\zeta,x_0,\alpha_0) 
= - z \hatt M_{\pm,0,2} (z,x_0)^{-1}, \quad z = \zeta^2, \; 
\zeta \in \bbC \backslash \bbR.    \lb{3.28}
\end{equation}
\end{theorem}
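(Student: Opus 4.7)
The plan is to organise the verifications already scattered through the discussion preceding the theorem into a clean argument that leans on the supersymmetric factorization $D^2 = H_1 \oplus H_2$. The workhorse is the observation, recorded in \eqref{3.12}--\eqref{3.15}, that the two $m$-blocks $u_{\pm,1}, u_{\pm,2}$ of the Dirac Weyl--Titchmarsh column $U_\pm(\zeta,\cdot,x_0,\alpha_0)$ are themselves $m \times m$ matrix solutions of $\tau_j u = \zeta^2 u$, $j=1,2$; that each $m$-block separately inherits the $L^2$-property from \eqref{2.25}; and that the normalization \eqref{2.27} forces $u_{\pm,1}(\zeta,x_0,x_0,\alpha_0) = I_m$ and $u_{\pm,2}(\zeta,x_0,x_0,\alpha_0) = M^D_\pm(\zeta,x_0,\alpha_0)$.

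Building on this, I would first compute the initial values of the quasi-derivatives at $x_0$ by evaluating \eqref{3.13}--\eqref{3.14} at $x_0$, which yields
\begin{align*}
u_{\pm,1}^{[1,1]}(\zeta,x_0,x_0,\alpha_0) &= \zeta\, M^D_\pm(\zeta,x_0,\alpha_0), \\
u_{\pm,2}^{[1,2]}(\zeta,x_0,x_0,\alpha_0) &= -\zeta\, I_m.
\end{align*}
Using \eqref{2.21} to guarantee invertibility of $M^D_\pm(\zeta,x_0,\alpha_0)$ for $\zeta \in \bbC \setminus \bbR$ (the local singularities of $\mp (M^D_\pm)^{-1}$ being real and of first order at worst), the definitions \eqref{3.21}--\eqref{3.22} then produce precisely the initial normalizations $\psi_{\pm,j}(z,x_0,x_0) = I_m$ and $\psi_{\pm,j}^{[1,j]}(z,x_0,x_0) = \hatt M_{\pm,0,j}(z,x_0)$ stated in \eqref{3.26}--\eqref{3.27}. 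The representation \eqref{3.25} is then immediate from the unique solvability of the first-order initial value problem \eqref{3.10}, since by \eqref{3.17}--\eqref{3.18} the combination $c_j(z,\cdot,x_0) + s_j(z,\cdot,x_0)\hatt M_{\pm,0,j}(z,x_0)$ satisfies $(\tau_j - z)f = 0$ with the same initial data at $x_0$. Combined with the $L^2$-inheritance noted above, this identifies $\psi_{\pm,j}$ as the Weyl--Titchmarsh solutions of $H_j$ and $\hatt M_{\pm,0,j}$ as the associated Dirichlet-type $m \times m$ Weyl--Titchmarsh matrices. Equality \eqref{3.28} then drops out by multiplying \eqref{3.23} and \eqref{3.24}, which telescopes to $-\zeta^2 I_m = -z\, I_m$.

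The only genuinely delicate point --- and the main, if still mild, obstacle --- is that the right-hand sides of \eqref{3.21}--\eqref{3.24} are written in terms of $\zeta$, while the left-hand sides should depend only on $z = \zeta^2 \in \bbC \setminus [0,\infty)$. I would resolve this ambiguity by invoking the supersymmetric parity identities collected in the final lemma of Section \ref{s2}: $u_{\pm,1}(\zeta,\cdot,x_0,\alpha_0)$ is even in $\zeta$, $u_{\pm,2}(\zeta,\cdot,x_0,\alpha_0)$ is odd, and $M^D_\pm(\zeta,x_0,\alpha_0) = -M^D_\pm(-\zeta,x_0,\alpha_0)$. Consequently, each of the four expressions $u_{\pm,1}(\zeta,\cdot,x_0,\alpha_0)$, $u_{\pm,2}(\zeta,\cdot,x_0,\alpha_0)\, M^D_\pm(\zeta,x_0,\alpha_0)^{-1}$, $\zeta M^D_\pm(\zeta,x_0,\alpha_0)$, and $-\zeta M^D_\pm(\zeta,x_0,\alpha_0)^{-1}$ is manifestly invariant under $\zeta \mapsto -\zeta$, and hence well defined as a function of $z$ alone, completing the justification of \eqref{3.21}--\eqref{3.24}.
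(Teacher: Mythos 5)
Your proof is correct and follows essentially the same line of reasoning as the paper's own (which is the discussion preceding the theorem, codified as Theorem \ref{t3.1}): the blocks of $U_\pm$ solve $\tau_j u = \zeta^2 u$ via \eqref{3.12}--\eqref{3.15}, inherit the $L^2$-property from \eqref{2.25}, and the normalization \eqref{2.27} together with \eqref{3.13}--\eqref{3.14} yields \eqref{3.26}--\eqref{3.27}, whence \eqref{3.25} by uniqueness for the IVP \eqref{3.10} and \eqref{3.28} by multiplying the two definitions. Two minor remarks: for invertibility of $M^D_\pm(\zeta,x_0,\alpha_0)$, $\zeta\in\bbC\backslash\bbR$, the rank statement \eqref{2.18} is the more direct citation than \eqref{2.21}; and your explicit parity argument via the $\mathfrak{S}_3$-symmetry lemma is a welcome expansion of the paper's parenthetical remark that the right-hand sides of \eqref{3.21}--\eqref{3.24} are independent of the branch chosen for $\zeta$.
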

%%%%%%%%

A version of the equality 
$\hatt M_{\pm,0,1} (z,x_0) = - z \hatt M_{\pm,0,2} (z,x_0)^{-1}$, in the special scalar case $m=1$, and under the  stronger hypothesis 
$\phi \in AC_{\loc}(\bbR)$, first appeared in \cite[eq.\ (5.71)]{GNP97}, and was quoted again in \cite[eq.\ (A.25)]{GST96}.

The subscript ``$0$'' in $\hatt M_{\pm,0,j} (z,x_0)$, $j=1,2$, indicates that 
these generalized Weyl--Titchmarsh matrices correspond to a Dirichlet boundary condition at the reference point $x_0$ in the corresponding 
generalized half-line Schr\"odinger operators $H_{\pm,0,j}$, $j=1,2$, in 
$L^2([x_0,\pm\infty))^m$ defined by
\begin{align}
& (H_{\pm,0,j} u)(x) \no\\
&\quad = (\tau_j u)(x) 
= - \big(u^{[1,j]}\big)'(x) + (-1)^{j+1}\phi(x) u^{[1,j]} (x) 
  \text{ for a.e.\ $x \in [x_0,\pm\infty)$,}    \no \\
& \, u \in \dom(H_{\pm,0,j})  \no \\
& \quad = \big\{v \in L^2([x_0,\pm\infty))^m \, \big| \, v,  v^{[1,j]} \in AC([x_0,x_0\pm R])^m \text{ for all $R>0$};     \lb{3.29} \\
& \hspace*{2.6cm}  v(x_0) = 0; \, \big[\big(v^{[1,j]}\big)' + (-1)^j \phi v^{[1,j]}\big] \in 
L^2([x_0,\pm\infty))^m\big\},    \no \\
&\hspace*{9.75cm} j=1,2.\no
\end{align}

\noindent 
The corresponding Green's function of $H_{\pm,0,j}$ is then of the familiar form
\begin{align}
G_{+,0,j} (z,x,x') &= (H_{+,0,j} - z I)^{-1} (x,x')   \no \\
& = \begin{cases} s_j(z,x,x_0) \psi_{+,j} (\ol z,x',x_0)^*, & x \leq x', \\
\psi_{+,j} (z,x,x_0) s_j(\ol z,x',x_0)^*, & x' \leq x, \end{cases}    \lb{3.31} \\
& \;\;\, x,x' \in [x_0,\infty), \; z \in \bbC\backslash [0,\infty), \; j=1,2,     \no 
\end{align}
and
\begin{align}
G_{-,0,j} (z,x,x') &= (H_{-,0,j} - z I)^{-1} (x,x')   \no \\
& = - \begin{cases} s_j(z,x,x_0) \psi_{-,j} (\ol z,x',x_0)^*, & x' \leq x, \\
\psi_{-,j} (z,x,x_0) s_j(\ol z,x',x_0)^*, & x \leq x', \end{cases}    \lb{3.32} \\
& \;\;\;\, x,x' \in (-\infty,x_0], \; z \in \bbC\backslash [0,\infty), \; j=1,2.     \no 
\end{align} 
Similarly, the diagonal terms in \eqref{A.25} together with 
\eqref{2.17}, \eqref{2.26} and 
\eqref{3.21}--\eqref{3.25} yield the Green's function for $H_j$, 
\begin{align}
& G_j (z,x,x') = (H_j - z I)^{-1} (x,x')   \no \\
& \quad = \psi_{\mp,j} (z,x,x_0) 
\Big[\hatt M_{-,0,j}(z,x_0) - \hatt M_{+,0,j}(z,x_0)\Big]^{-1} 
\psi_{\pm,j} (\ol z,x',x_0)^*,     \lb{3.33} \\
& \hspace*{4cm} x \lesseqgtr x', \; 
x,x' \in \bbR, \; z \in \bbC\backslash [0,\infty), \; j=1,2.  \no 
\end{align}
One can show that 
\begin{align} 
\hatt M_{-,0,j}(z,x_0) - \hatt M_{+,0,j}(z,x_0) 
&= W(\psi_{+,j} (\ol z,\,\cdot\,,x_0)^* , \psi_{-,j} (z,\,\cdot\,,x_0))\lb{3.34}   \\
&= - W(\psi_{-,j} (\ol z,\,\cdot\,,x_0)^* , \psi_{+,j} (z,\,\cdot\,,x_0)),   \quad j=1,2,\no
\end{align} 
where $W(\cdot,\cdot)$, $j=1,2$, denote the Wronskians of matrix-valued functions
$F, G \in AC_{\loc}(\bbR)^{m \times m}$ defined by 
\begin{align}
W(F,G)(x) &= F(x) G'(x) - F'(x) G(x)   \lb{3.36dd}  \\
&= F(x) G^{[1,j]}(x) - F^{[1,j]}(x) G(x) +(-1)^j [F(x),\phi(x)]G(x),   \no \\ 
&\hspace*{5cm}\text{ for a.e.\ $x \in \bbR$,}\quad j=1,2.    \no
\end{align}
Of course, \eqref{3.21}--\eqref{3.28}, \eqref{3.31}--\eqref{3.33}, extend as usual to all $z \in \rho(H_{\pm,0,j})$, respectively, all $z \in \rho(H_j)$, $j=1,2$.

We note in passing that 
\begin{align}
& \, W(v_j(\ol z, \cdot)^*, u_j(z,\cdot))(x) \, \text{ is $x$-independent whenever} 
\no \\
& - [u_j'(z,x) + (-1)^{j+1} \phi(x) u_j(z,x)]' 
+ (-1)^{j+1} \phi(x) [u_j'(z,x)   \no \\
& \quad + (-1)^{j+1} \phi(x) u_j(z,x)] = z u_j(z,x),   \\
& - [v_j'(\ol z,x)^* + (-1)^{j+1} v_j(\ol z,x)^* \phi(x)]' 
+ (-1)^{j+1} [v_j'(\ol z,x)^*   \no \\ 
& \quad + (-1)^{j+1} v_j(\ol z,x)^* \phi(x)] \phi(x) = z v_j(\ol z, x)^*,    \quad
j=1,2.  \no 
\end{align} 

This circle of ideas will be further explored in Section \ref{s4}. 

%%%%%%%%
\begin{remark} \lb{r3.2} 
In the particular case where $\phi \in AC_{\loc}(\bbR)^{m \times m}$ 
and hence, 
\begin{equation} 
V_j = \big[\phi^2 + (-1)^{j} \phi'\big] \in L^1_{\loc}(\bbR)^{m \times m}, \quad j=1,2, 
\end{equation} 
the relation between the generalized  Weyl--Titchmarsh matrices $\hatt M_{\pm,0,j}(\,\cdot\,,x_0)$, 
defined in \eqref{3.23}, \eqref{3.24}, and the standard (Dirichlet-type) Weyl--Titchmarsh matrices 
$M_{\pm,0,j}(\,\cdot\,,x_0)$ is especially simple and reads
\begin{equation}
\hatt M_{\pm,0,j}(\,\cdot\,,x_0) = M_{\pm,0,j}(\,\cdot\,,x_0) 
+ (-1)^{j+1} \phi(x_0),  \quad j=1,2.  
\end{equation}
In particular, since $\phi(x_0)$ is $z$-independent and self-adjoint, the function theoretic (and hence spectral theoretic) content of 
$\hatt M_{\pm,0,j}(\,\cdot\,,x_0)$ and $M_{\pm,0,j}(\,\cdot\,,x_0)$ coincides in this special case as they possess identical matrix measures in their respective Nevanlinna--Herglotz representations. (These matrix measures being generated by the half-line $m \times m$ matrix-valued spectral functions of $H_{\pm,0,j}$.)
\end{remark}
%%%%%%%%

%%%%%%%%
\begin{remark} \lb{r3.3}
In the particular scalar case $m=1$, and under the stronger assumption 
$\phi \in L^2_{\loc} (\bbR)$, the operators $H_j$, $j=1,2$, and especially, the associated Miura transformation, 
\begin{equation}
\phi \mapsto \phi^2 - \phi'
\end{equation}
(i.e., the relation between $\phi$ and $V_1$), was studied in great detail in \cite{KPST05}. However, the authors did not directly rely on $D$ and its   supersymmetric structure, but instead based their investigations on an oscillation theoretic approach using Hartman's notion of (non)principal solutions. Subsequently, the authors of \cite{FHMP09}, \cite{HMP11}, and 
\cite{HMP11a} used a Zakharov--Shabat (ZS), or Ablowitz--Kaup--Newell--Segur (AKNS) Dirac-type expression $\wti D$ in connection with their investigation of Miura transformations and inverse scattering theory. Explicitly, $\wti D$ represents a self-adjoint $L^2(\bbR)^2$-realization associated with the differential expression $\wti{\cD}$ of the form
\begin{equation}
\wti{\cD} = i \begin{pmatrix} - (d/dx) & - \phi(x) \\ \phi(x) & (d/dx) 
\end{pmatrix} \, \text{ for a.e.\ $x \in \bbR$.} 
\end{equation}
To make the connection with the supersymmetric formalism presented in this paper, we introduce the unitary $2 \times 2$ matrices
\begin{equation}
\Upsilon = \f{1}{2} \begin{pmatrix} 1-i & 1-i \\ 1+i & -1-i \end{pmatrix}, \quad 
\Upsilon^* = \f{1}{2} \begin{pmatrix} 1+i & 1-i \\ 1+i & -1+i \end{pmatrix} = \Upsilon^{-1},
\end{equation}
and observe that
\begin{equation}
\Upsilon \wti{\cD} \Upsilon^{-1} = \cD, 
\end{equation}
with $\cD$ in \eqref{2.6} the differential expression underlying the supersymmetric Dirac-type operator $D$.

It is the use of supersymmetry of $D$ in connection with the (standard) assumption of local integrability of the coefficient $\phi$ that instantly leads to Weyl--Titchmarsh solutions $U_{\pm}(\zeta,x,x_0,\alpha)$ and 
Weyl--Titchmarsh matrices $M^D_{\pm}(\zeta,x_0,\alpha)$ of $D$ and hence effortlessly via \eqref{3.21}--\eqref{3.25} to those of the generalized 
Schr\"odinger-type operators $H_{\pm,0,j}$, $H_j$, $j=1,2$. In particular, it immediately leads to the Green's functions \eqref{3.31}--\eqref{3.33} of $H_{\pm,0,j}$ and of $H_j$, $j=1,2$, respectively, and permits the more general hypothesis $\phi \in L^1_{\loc} (\bbR)$ rather than 
$\phi \in L^2_{\loc} (\bbR)$. In addition, it permits an effortless discussion of the matrix-valued case (the latter cannot easily be obtained via oscillation theoretic methods, cf.\ the comments preceding \cite[Hypothesis\ 3.6]{CG03} in this context). 
\end{remark}
%%%%%%%%

%%%%%%%%
\begin{remark} \lb{r3.4}
The supersymmetric formalism employed in this section relies on nonnegativity of 
$H_1 = A^* A$ and $H_2 = A A^*$ and so imposes a restriction on the distributional 
potential coefficients $V_j$ (formally, of the Miura-type 
$V_j(x) = \phi(x)^2 + (-1)^{j} \phi'(x)$), $j=1,2$. In practice, however, this restriction 
amounts to dealing with Schr\"odinger operators bounded from below as adding a sufficiently large positive constant to a given potential (if necessary) will render $H_1$ and 
$H_2$ nonnegative. (Subsequently, this additional constant can be removed.) 

In this context we also notice that it is possible to generalize the operator 
$D$ in \eqref{2.5} to the form
\begin{equation}
D_m = \begin{pmatrix} m & A^* \\ A & -m \end{pmatrix}, \quad m \in \bbR,    \lb{3.39}
\end{equation}
to the effect that then 
\begin{equation} 
D_m^2 = \begin{pmatrix} A^* A + m^2 I& 0 \\ 0 & A A^* + m^2 I \end{pmatrix} = 
(H_1 + m^2 I) \oplus (H_2 + m^2 I).    \lb{3.40}
\end{equation} 
\end{remark}
%%%%%%%%

Finally, we note that these results on generalized Schr\"odinger operators with distributional potentials extend to general (three-coefficient) Sturm--Liouville operators, but we refrain from further details at this point.

%%%%%%%%%%%%%%%%%%%%%%%%%%%%%%%%%%%%%%
%%%%%%%%%%%%%%%%%%%%%%%%%%%%%%%%%%%%%%
\section{Basic Spectral Theory for $H_j$, $j=1,2$, and Some Applications}  \lb{s4}
%%%%%%%%%%%%%%%%%%%%%%%%%%%%%%%%%%%%%%
%%%%%%%%%%%%%%%%%%%%%%%%%%%%%%%%%%%%%%

In our final section we provide some spectral theoretic applications of the supersymmetric approach outlined in Section \ref{s3}. In particular, upon deriving the basic aspects of Weyl--Titchmarsh theory for the generalized Schr\"odinger operators $H_j$, $j=1,2$, in $L^2(\bbR)^m$ (cf.\ \eqref{3.5}, \eqref{3.6}) and a discussion of the corresponding spectral representations, we derive a local Borg--Marchenko uniqueness theorem by utilizing the known analog for the Dirac operator $D$. 

Since the Schr\"odinger operator $H_2$ as defined in \eqref{3.6} with coefficient  $\phi(\cdot)$ is realized as a Schr\"odinger operator of the form $H_1$ as defined in \eqref{3.5} with coefficient $\phi(\cdot)$ replaced by $-\phi(\cdot)$, it suffices to exclusively study spectral theory for Schr\"odinger operators of the form $H_1$.  In fact, the forms of many of the subsequent formulas relevant to the spectral theory of $H_j$, $j=1,2$, are independent of the choice $j\in \{1,2\}$, the exception being \eqref{4.67}, \eqref{4.81} below.

To set the stage for the main results of this section, we begin with a discussion of solutions to the equation $\tau_ju=zu$, $z\in \bbC\backslash\bbR$, $j=1,2$, with $\tau_j$ as defined in \eqref{3.15}, and their corresponding Wronskian relations.  

To this end, suppose $z_k\in \bbC\backslash \bbR$, $k=1,2$, and that $u_j(z_1,\cdot)$ and $u_j(z_2,\cdot)$ satisfy
\begin{equation}
u_j(z_k,\cdot),\, u^{[1,j]}_j(z_k,\cdot) \in AC_{\loc}(\bbR)^{m \times m},  \quad   
\tau_ju_j(z_k,\cdot)=z_ku_j(z_k,\cdot), \quad j, k=1,2.  \lb{4.1}
\end{equation}
Then one observes that
\begin{align}
&W\big(u_j(z_1,\cdot)^*,u_j(z_2,\cdot)\big)(x)=u_j(z_1,x)^*u^{[1,j]}_j(z_2,x)-\big(u^{[1,j]}_j(z_1,x)\big)^*u_j(z_2,x),\no\\
&\hspace*{10cm} j=1,2,\lb{4.2}
\end{align}
so that the Wronskian appearing in \eqref{4.2} is differentiable almost everywhere.  Moreover, \eqref{4.1} implies 
\begin{align}
\begin{split} 
\frac{d}{dx}\big(u^{[1,j]}_j(z_k,x)\big)=(-1)^{j+1}\phi(x)u^{[1,j]}_j(z_k,x)-z_ku_j(z_k,x) &\lb{4.3} \\
  \text{ for a.e.\ $x\in \bbR$}, \; j, k=1,2.& 
\end{split}   
\end{align}
As a result, one computes
\begin{align}
&\frac{d}{dx}W\big(u_j(z_1,\cdot)^*,u_j(z_2,\cdot)\big)(x) = 
\frac{d}{dx}\Big[u_j(z_1,x)^*u^{[1,j]}_j(z_2,x)-\big(u^{[1,j]}_j(z_1,x)\big)^*u_j(z_2,x) \Big]   \no \\
& \quad = (\overline{z_1}-z_2)u_j(z_1,x)^*u_j(z_2,x)  \, \text{ for a.e.\ $x\in \bbR$}.    \lb{4.4} 
\end{align}
We summarize the above considerations as follows: 

%%%%%%%%%
\begin{lemma}\lb{l4.1}
Assume Hypothesis \ref{h2.1} and suppose that $z_k\in \bbC\backslash \bbR$, $k=1,2$.  If $u_j(z_k,\cdot)$, $j, k=1,2$, satisfy \eqref{4.1}, then
\begin{equation}\lb{4.6}
\begin{split}
\frac{d}{dx}W\big(u_j(z_1,\cdot)^*,u_j(z_2,\cdot)\big)(x)=(\overline{z_1}-z_2)u_j(z_1,x)^*u_j(z_2,x)&\\
\text{for a.e.\ $x\in \bbR$}, \; j=1,2.&
\end{split}
\end{equation}
\end{lemma}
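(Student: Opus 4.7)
The plan is straightforward: the identity \eqref{4.6} is essentially already derived in the computation \eqref{4.2}--\eqref{4.4} preceding the lemma, so the proof amounts to a clean verification. Concretely, I would begin from the quasi-derivative form \eqref{4.2} of the Wronskian. This expression is pointwise well-defined and almost everywhere differentiable in $x$, since hypothesis \eqref{4.1} guarantees that both $u_j(z_k,\cdot)$ and $u_j^{[1,j]}(z_k,\cdot)$ belong to $AC_{\loc}(\bbR)^{m\times m}$ for $j,k=1,2$.

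Next, I would differentiate \eqref{4.2} term by term. Two ingredients are needed. First, the definition of the quasi-derivative, rewritten as $u_j'(z_k,\cdot) = u_j^{[1,j]}(z_k,\cdot) - (-1)^{j+1}\phi\, u_j(z_k,\cdot)$, follows immediately from \eqref{3.3}--\eqref{3.4}. Second, equation \eqref{4.3} expresses $\frac{d}{dx} u_j^{[1,j]}(z_k,\cdot)$ as $(-1)^{j+1}\phi\, u_j^{[1,j]}(z_k,\cdot) - z_k u_j(z_k,\cdot)$, which is just the first-order rewriting of $\tau_j u_j(z_k,\cdot)=z_k u_j(z_k,\cdot)$. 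Taking complex conjugate transposes in these two formulas yields the corresponding expressions for $(u_j(z_1,\cdot)^*)'$ and $((u_j^{[1,j]}(z_1,\cdot))^*)'$; here the self-adjointness assumption $\phi=\phi^*$ from Hypothesis \ref{h2.1} is essential, because it permits replacing $(\phi\, u_j(z_1,\cdot))^*$ by $u_j(z_1,\cdot)^*\phi$ on the left.

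With these formulas in hand, the final step is purely algebraic: expand $\frac{d}{dx} W\bigl(u_j(z_1,\cdot)^*, u_j(z_2,\cdot)\bigr)(x)$, and observe that every term containing $\phi$ appears in a canceling pair (thanks to $\phi=\phi^*$), while the cross terms $(u_j^{[1,j]}(z_1,\cdot))^*\, u_j^{[1,j]}(z_2,\cdot)$ cancel outright. The only surviving contributions are $\overline{z_1}\, u_j(z_1,\cdot)^* u_j(z_2,\cdot)$ and $-z_2\, u_j(z_1,\cdot)^* u_j(z_2,\cdot)$, which combine to $(\overline{z_1}-z_2)\, u_j(z_1,x)^* u_j(z_2,x)$ a.e.\ on $\bbR$, as asserted.

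No serious obstacle arises in this argument. The only points requiring care are tracking the sign $(-1)^{j+1}$ that distinguishes the cases $j=1$ and $j=2$, and invoking the self-adjointness of $\phi$ precisely at the step where the commutator of $\phi$ with the matrix-valued solution must vanish; this also ensures that the quasi-derivative form \eqref{4.2} of the Wronskian is fully consistent with the two equivalent expressions displayed in \eqref{3.36dd}.
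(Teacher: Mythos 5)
Your proposal is correct and coincides with the paper's own argument, which is exactly the computation \eqref{4.2}--\eqref{4.4} carried out immediately before the lemma: differentiate the quasi-derivative form of the Wronskian, use \eqref{4.3} and $\phi=\phi^*$, and observe that all $\phi$-terms and the $(u_j^{[1,j]})^*u_j^{[1,j]}$ cross terms cancel. Nothing further is needed.
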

%%%%%%%%%

As an immediate consequence of Lemma \ref{l4.1}, one has the following result. 

%%%%%%%%%
\begin{corollary}\lb{c4.2}
Assume Hypothesis \ref{h2.1}. Then for $z\in\bbC\backslash\bbR$, the following identity holds.
\begin{align}
&\Im(z)\int_{x_0}^x dx'\,\psi_{\pm,j}(z,x',x_0)^*\psi_{\pm,j}(z,x',x_0)\no\\
&\quad =\Im\big(\hatt M_{\pm,0,j}(z,x_0) \big)-(2i)^{-1}
W\big(\psi_{\pm,j}(z,\, \cdot \, ,x_0)^*,\psi_{\pm,j}(z,\, \cdot \, ,x_0)\big)(x),\lb{4.8}\\
&\hspace*{8.2cm} x\in \bbR,\; j=1,2.\no
\end{align}
\end{corollary}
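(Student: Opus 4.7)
The natural strategy is to apply Lemma \ref{l4.1} with $u_j(z_1,\cdot) = u_j(z_2,\cdot) = \psi_{\pm,j}(z,\,\cdot\,,x_0)$ and then integrate. Since $\psi_{\pm,j}(z,\,\cdot\,,x_0)$ satisfies $\tau_j \psi_{\pm,j} = z \psi_{\pm,j}$ together with $\psi_{\pm,j},\, \psi_{\pm,j}^{[1,j]} \in AC_{\loc}(\bbR)^{m\times m}$ (so that the hypotheses of Lemma \ref{l4.1} are satisfied), setting $z_1 = z_2 = z$ in \eqref{4.6} gives
\begin{equation}
\frac{d}{dx} W\big(\psi_{\pm,j}(z,\,\cdot\,,x_0)^*, \psi_{\pm,j}(z,\,\cdot\,,x_0)\big)(x) = (\overline{z} - z)\, \psi_{\pm,j}(z,x,x_0)^* \psi_{\pm,j}(z,x,x_0)
\end{equation}
a.e.\ on $\bbR$, and $\overline{z} - z = -2i \Im(z)$.

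Next, I would integrate this identity from $x_0$ to $x$. The only nontrivial input needed is the value of the Wronskian at the reference point $x_0$. Using the Wronskian expression \eqref{4.2} for matrix-valued solutions together with the initial conditions \eqref{3.26} and \eqref{3.27},
\begin{equation}
\psi_{\pm,j}(z,x_0,x_0) = I_m, \qquad \psi_{\pm,j}^{[1,j]}(z,x_0,x_0) = \hatt M_{\pm,0,j}(z,x_0),
\end{equation}
one computes
\begin{align}
W\big(\psi_{\pm,j}(z,\,\cdot\,,x_0)^*, \psi_{\pm,j}(z,\,\cdot\,,x_0)\big)(x_0) &= \hatt M_{\pm,0,j}(z,x_0) - \hatt M_{\pm,0,j}(z,x_0)^* \notag \\
&= 2i\, \Im\big(\hatt M_{\pm,0,j}(z,x_0)\big).
\end{align}

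Combining the integrated identity with this boundary value yields
\begin{equation}
W\big(\psi_{\pm,j}(z,\,\cdot\,,x_0)^*, \psi_{\pm,j}(z,\,\cdot\,,x_0)\big)(x) - 2i\, \Im\big(\hatt M_{\pm,0,j}(z,x_0)\big) = -2i\, \Im(z) \int_{x_0}^x dx'\, \psi_{\pm,j}(z,x',x_0)^* \psi_{\pm,j}(z,x',x_0),
\end{equation}
and dividing by $-2i$ and rearranging produces \eqref{4.8}. There is no real obstacle here — the only care point is bookkeeping of the adjoint and the sign of $\overline{z}-z$ in order to match the factor $(2i)^{-1}$ on the right-hand side of \eqref{4.8}; once Lemma \ref{l4.1} and the normalizations \eqref{3.26}--\eqref{3.27} are in hand, the result is a direct computation.
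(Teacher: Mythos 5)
Your proof is correct and follows exactly the paper's argument: apply Lemma \ref{l4.1} with $z_1=z_2=z$ and $u_j=\psi_{\pm,j}(z,\,\cdot\,,x_0)$, integrate from $x_0$ to $x$, and evaluate the Wronskian at $x_0$ via the normalizations \eqref{3.26}--\eqref{3.27}. The only difference is that you spell out the boundary term $W(\psi_{\pm,j}^*,\psi_{\pm,j})(x_0)=2i\,\Im\big(\hatt M_{\pm,0,j}(z,x_0)\big)$ explicitly, which the paper leaves implicit.
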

%%%%%%%%%
\begin{proof}
Let $z\in \bbC\backslash \bbR$ be fixed and choose $u_j(z_1,\cdot)=u_j(z_2,\cdot)
=\psi_{\pm,j}(z,\, \cdot \,,x_0)$, $j=1,2$, in \eqref{4.6}, one obtains
\begin{align}
&\frac{d}{dx'}W\big(\psi_{\pm,j}(z,\, \cdot \,,x_0)^*,\psi_{\pm,j}(z,\, \cdot \,,x_0)\big)(x')\no\\
&\quad=-2i\Im(z)\psi_{\pm,j}(z,x',x_0)^*\psi_{\pm,j}(z,x',x_0) \, \text{ for a.e.\ $x'\in \bbR$},\; j=1,2.\lb{4.10}
\end{align}
Integration of both sides of \eqref{4.10} from $x_0$ to $x$, using the normalizations in \eqref{3.26} and \eqref{3.27}, yields \eqref{4.8}. 
\end{proof}
%%%%%%%%%

%%%%%%%%%
\begin{lemma}\lb{l4.3}
Assume Hypothesis \ref{h2.1}. Then for any $z\in \bbC\backslash \bbR$, 
\begin{align}
\lim_{x\rightarrow \pm \infty}W\big(\psi_{\pm,j}(z,\, \cdot \,,x_0)^*, \psi_{\pm,j}(z,\cdot,x_0) \big)(x)&= 0,\quad j=1,2.\lb{4.11}
\end{align}
\end{lemma}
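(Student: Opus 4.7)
The plan is to express $W(\psi_{\pm,j}^*,\psi_{\pm,j})(x)$ in terms of the components of the Dirac Weyl--Titchmarsh solution $U_{\pm}(\zeta,\,\cdot\,,x_0,\alpha_0)=(u_{\pm,1}\ u_{\pm,2})^\top$ and to reduce matters to the vanishing of the off-diagonal product $u_{\pm,1}(x)^*u_{\pm,2}(x)$ as $x\to\pm\infty$. First I would record the $L^2$-properties: since $z\in\bbC\backslash\bbR$ forces $\Im(\zeta)\neq 0$, the identity \eqref{2.25} places $u_{\pm,1}(\zeta,\,\cdot\,,x_0,\alpha_0),u_{\pm,2}(\zeta,\,\cdot\,,x_0,\alpha_0)$ in $L^2([x_0,\pm\infty))^{m\times m}$; the identifications \eqref{3.21}, \eqref{3.22} and the invertibility of $M^D_\pm(\zeta,x_0,\alpha_0)$ guaranteed by \eqref{2.18} then also yield $\psi_{\pm,j}(z,\,\cdot\,,x_0)\in L^2([x_0,\pm\infty))^{m\times m}$ for $j=1,2$.

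Next, I would rewrite both Wronskians using the Dirac system \eqref{3.13}, \eqref{3.14}. Since $\psi_{\pm,1}^{[1,1]}=Au_{\pm,1}=\zeta u_{\pm,2}$, the definition \eqref{4.2} gives
\begin{equation*}
W(\psi_{\pm,1}^*,\psi_{\pm,1})(x) = \zeta\, u_{\pm,1}(x)^* u_{\pm,2}(x) - \overline{\zeta}\, u_{\pm,2}(x)^* u_{\pm,1}(x),
\end{equation*}
while, setting $N_\pm:=M^D_\pm(\zeta,x_0,\alpha_0)^{-1}$, a parallel calculation using $\psi_{\pm,2}=u_{\pm,2}N_\pm$ and $\psi_{\pm,2}^{[1,2]}=-\zeta\, u_{\pm,1}N_\pm$ delivers
\begin{equation*}
W(\psi_{\pm,2}^*,\psi_{\pm,2})(x) = N_\pm^*\bigl[\overline{\zeta}\, u_{\pm,1}(x)^*u_{\pm,2}(x) - \zeta\, u_{\pm,2}(x)^*u_{\pm,1}(x)\bigr]N_\pm.
\end{equation*}
In both cases it suffices to prove $f(x):=u_{\pm,1}(\zeta,x,x_0,\alpha_0)^*u_{\pm,2}(\zeta,x,x_0,\alpha_0)\to 0$ as $x\to\pm\infty$, since $u_{\pm,2}^*u_{\pm,1}=f^*$ vanishes together with $f$.

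To accomplish this vanishing, I would differentiate $f$ using the first-order Dirac system: $Au_{\pm,1}=\zeta u_{\pm,2}$ and $A^*u_{\pm,2}=\zeta u_{\pm,1}$ rewrite as $u_{\pm,1}'=\zeta u_{\pm,2}-\phi u_{\pm,1}$ and $u_{\pm,2}'=\phi u_{\pm,2}-\zeta u_{\pm,1}$ a.e.\ on $\bbR$. Since $u_{\pm,k}\in AC_{\loc}(\bbR)^{m\times m}$ the product rule applies, and because $\phi=\phi^*$ the two $u_{\pm,1}^*\phi u_{\pm,2}$-terms cancel identically, leaving
\begin{equation*}
f'(x) = \overline{\zeta}\, u_{\pm,2}(x)^*u_{\pm,2}(x) - \zeta\, u_{\pm,1}(x)^*u_{\pm,1}(x)\quad\text{for a.e.\ $x\in\bbR$.}
\end{equation*}
The $L^2$-property of $u_{\pm,k}$ forces $f'\in L^1([x_0,\pm\infty))^{m\times m}$, so $f$ possesses a finite limit at $\pm\infty$; moreover $f\in L^1([x_0,\pm\infty))^{m\times m}$ by Cauchy--Schwarz. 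A continuous $L^1$-function with a finite limit at infinity must have that limit equal to zero, yielding $f(x)\to 0$ and hence \eqref{4.11}.

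The one genuinely delicate point is the computation of $f'$: despite $\phi$ being only locally integrable, the potentially unbounded $\phi$-terms produced by $u_{\pm,1}'$ and $u_{\pm,2}'$ cancel in the product $u_{\pm,1}^*u_{\pm,2}$ precisely because of the self-adjointness $\phi=\phi^*$. Once this cancellation is secured, the remainder of the argument is a routine $L^1$--$L^2$ manipulation.
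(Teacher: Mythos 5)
Your proof is correct and rests on the same key idea as the paper's: reduce \eqref{4.11} to the vanishing of the off-diagonal product $u_{\pm,1}^*u_{\pm,2}$, which follows because the product and its derivative (computed from the first-order Dirac system, the $\phi$-terms cancelling via $\phi=\phi^*$) both lie in $L^1([x_0,\pm\infty))^{m\times m}$, forcing the limit at $\pm\infty$ to exist and equal zero. Your route is marginally more streamlined than the paper's, which first decomposes the Wronskian into a multiple of $U_\pm^*JU_\pm$ plus a multiple of $u_{\pm,2}^*u_{\pm,1}$ and disposes of the former via the limit-point relation \eqref{4.14}; since $u_{\pm,2}^*u_{\pm,1}=(u_{\pm,1}^*u_{\pm,2})^*$, that extra step is not actually needed, as your argument makes clear.
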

%%%%%%%%%
\begin{proof}
We provide a proof of \eqref{4.11} for the case $j=1$; an analogous argument is used to settle the case $j=2$.  In order to prove \eqref{4.11} for $j=1$, fix $z\in \bbC\backslash \bbR$ and observe that by \eqref{3.13} and \eqref{3.21},
\begin{align}
&W_1\big(\psi_{\pm,1}(z,\, \cdot \,,x_0)^*, \psi_{\pm,1}(z,\, \cdot \,,x_0) \big)(x)\no\\
&\quad=\psi_{\pm,1}(z,x,x_0)^*\psi_{\pm,1}^{[1,1]}(z,x,x_0)-\psi_{\pm,1}^{[1,1]}(z,x,x_0)^*\psi_{\pm,1}(z,x,x_0)\no\\
&\quad=\zeta u_{\pm,1}(\zeta,x,x_0,\alpha_0)^*u_{\pm,2}(\zeta,x,x_0,\alpha_0)-\zeta u_{\pm,2}(\zeta,x,x_0,\alpha_0)^*u_{\pm,1}(\zeta,x,x_0,\alpha_0)\no\\
&\quad=(\zeta-\overline{\zeta})u_{\pm,2}(\zeta,x,x_0,\alpha_0)^*u_{\pm,1}(\zeta,x,x_0,\alpha_0)\no\\
&\hspace*{.8cm}-\overline{\zeta}U_{\pm}(\zeta,x,x_0,\alpha_0)^*JU_{\pm}(\zeta,x,x_0,\alpha_0) 
\, \text{ for a.e.\ $x\in\bbR$}, \; z=\zeta^2.   \lb{4.13}
\end{align}
Since $\cD$ is in the limit point case one has the following limit relation (cf., e.g., \cite[Corollary 2.3]{HS84})
\begin{equation}\lb{4.14}
\lim_{x\rightarrow \pm\infty}U_{\pm}(\zeta,x,x_0,\alpha_0)^*JU_{\pm}(\zeta,x,x_0,\alpha_0)=0.
\end{equation}
Thus, in order to prove \eqref{4.11}, it suffices to show
\begin{equation}\lb{4.15}
\lim_{x\rightarrow \pm \infty}u_{\pm,2}(\zeta,x,x_0,\alpha_0)^*u_{\pm,1}(\zeta,x,x_0,\alpha_0)=0.
\end{equation}
To this end, one observes that the function under the limit in \eqref{4.15} is differentiable and that, in fact,
\begin{align}
&\big[u_{\pm,2}(\zeta,\, \cdot \, ,x_0,\alpha_0)^*u_{\pm,1}(\zeta,\, \cdot \,,x_0,\alpha_0)\big]'(x)\lb{4.16}\\
&\quad=\zeta u_{\pm,2}(\zeta,x,x_0,\alpha_0)^*u_{\pm,2}(\zeta,x,x_0,\alpha_0)-\overline{\zeta} u_{\pm,1}(\zeta,x,x_0,\alpha_0)^*u_{\pm,1}(\zeta,x,x_0,\alpha_0)    \no\\
&\hspace*{9.9cm}\text{for a.e.\ $x\in \bbR$.}    \no
\end{align}
We recall that $u_{\pm,2}(\zeta,\, \cdot \, ,x_0,\alpha_0),\, u_{\pm,1}(\zeta,\cdot,x_0,\alpha_0)\in L^2((0,\pm \infty))^{m\times m}$.\ As a result, one infers that 
\begin{equation}\lb{4.17}
u_{\pm,2}(\zeta,\, \cdot \, ,x_0,\alpha_0)^*u_{\pm,1}(\zeta,\cdot,x_0,\alpha_0)\in L^1((0,\pm\infty))^{m\times m},
\end{equation}
the same containment is true for the derivative by \eqref{4.16}.  Moreover,
\begin{align}
\begin{split} 
&u_{\pm,2}(\zeta, x,x_0,\alpha_0)^*u_{\pm,1}(\zeta,x,x_0,\alpha_0) 
=u_{\pm,2}(\zeta,0,x_0,\alpha_0)^*u_{\pm,1}(\zeta,0,x_0,\alpha_0)    \\
& \quad +\int_0^x dx'\, [u_{\pm,2}(\zeta,x',x_0,\alpha_0)^*u_{\pm,1}(\zeta,x',x_0,\alpha_0)]'(x'), 
\quad x\in \bbR,\lb{4.18}
\end{split} 
\end{align}
coupled with the fact that the function appearing under the integral in \eqref{4.18} belongs to $L^1((0,\pm\infty))$, affirms the existence of the limits appearing in \eqref{4.15}.  In light of \eqref{4.17}, both limits must equal zero.
\end{proof}
%%%%%%%%%

Taking limits $x\rightarrow \pm \infty$ throughout \eqref{4.8} and using \eqref{4.11}, one obtains the fundamental identities:
\begin{equation}\lb{4.19}
\begin{split}
\Im\big(\hatt M_{\pm,0,j}(z,x_0) \big)=\Im(z)\int_{x_0}^{\pm \infty}dx'\, \psi_{\pm,j}(z,x',x_0)^*\psi_{\pm,j}(z,x',x_0),&\\
z\in \bbC\backslash \bbR, \; j=1,2.&
\end{split}
\end{equation}
The identities in \eqref{4.19} show that $\pm\hatt M_{\pm,0,j}(z,x_0)$, $j=1,2$, are Nevanlinna--Herglotz functions.  We summarize this together with some other relevant properties of the generalized Dirichlet-type $m\times m$ matrix-valued Weyl--Titchmarsh functions $\hatt M_{\pm,0,j}(z,x_0)$, $j=1,2$, (associated to $H_j$) in the following result.

%%%%%%%%%
\begin{lemma}\lb{l4.4}
Assume Hypothesis \ref{h2.1} and let $\hatt M_{\pm,0,j}(z,x_0)$, $j=1,2$, $z\in \bbC\backslash \bbR$, denote the generalized Dirichlet-type $m\times m$ matrix-valued Weyl--Titchmarsh functions associated to $H_j$ as defined by \eqref{3.23} and \eqref{3.24}.  Then $\pm \hatt M_{\pm,0,j}(\, \cdot \, ,x_0)$, $j=1,2$, is an $m\times m$ matrix-valued Nevanlinna--Herglotz function of maximal rank $m$.  In particular,
\begin{align}
&\Im\big(\pm \hatt M_{\pm,0,j}(z,x_0)\big)\geq0,\quad z\in \bbC_+,\lb{4.21}\\
&\hatt M_{\pm,0,j}(\overline{z},x_0)=\hatt M_{\pm,0,j}(z,x_0)^*,\quad z\in \bbC\backslash \bbR, \lb{4.22}\\
&\rank\big(\hatt M_{\pm,0,j}(z,x_0)\big)=m,\quad z\in \bbC\backslash \bbR, \lb{4.23}\\
&\lim_{\e\downarrow 0}\hatt M_{\pm,0,j}(\lambda+i\e,x_0) \, \text{ exists for a.e.\ $\lambda \in \bbR$,} \; j=1,2.
\lb{4.24}
\end{align}
\end{lemma}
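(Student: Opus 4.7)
The plan is to derive Lemma~\ref{l4.4} from the combination of Theorem~\ref{t2.3} (the Nevanlinna--Herglotz properties of $M^D_\pm(\,\cdot\,,x_0,\alpha_0)$), Theorem~\ref{t3.1} (the explicit relations $\hatt M_{\pm,0,1}(z,x_0)=\zeta M^D_\pm(\zeta,x_0,\alpha_0)$ and $\hatt M_{\pm,0,2}(z,x_0)=-\zeta M^D_\pm(\zeta,x_0,\alpha_0)^{-1}$ with $z=\zeta^2$), and the fundamental identity \eqref{4.19} that was just established. Each of the four conclusions then reduces to a short verification.

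For the sign of the imaginary part \eqref{4.21}, I would invoke \eqref{4.19} directly. For $z\in\bbC^+$, the integrand $\psi_{\pm,j}(z,x',x_0)^*\psi_{\pm,j}(z,x',x_0)$ is pointwise positive semi-definite, so the integral from $x_0$ to $+\infty$ is positive semi-definite and the integral from $x_0$ to $-\infty$ is negative semi-definite; multiplication by $\Im(z)>0$ then yields \eqref{4.21}. Moreover, the normalization $\psi_{\pm,j}(z,x_0,x_0)=I_m$ from \eqref{3.26}, together with continuity in $x$, renders $\psi_{\pm,j}^*\psi_{\pm,j}$ strictly positive definite on a small neighborhood of $x_0$, so the integrated matrix is strictly positive (respectively, negative) definite and $\pm\Im\hatt M_{\pm,0,j}(z,x_0)$ has full rank $m$ for every $z\in\bbC^+$. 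This simultaneously delivers \eqref{4.23}, since a matrix whose imaginary part has full rank is necessarily invertible.

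For the Hermitian symmetry \eqref{4.22}, I would transfer \eqref{2.17} through the square-root substitution. Fixing the branch with $\Im(\zeta)>0$ for $z\in\bbC^+$, the point $\overline{z}$ corresponds to $\overline{\zeta}$, and Theorem~\ref{t3.1} combined with \eqref{2.17} gives
\begin{align*}
\hatt M_{\pm,0,1}(\overline{z},x_0)
&= \overline{\zeta}\,M^D_\pm(\overline{\zeta},x_0,\alpha_0)
= \overline{\zeta}\,M^D_\pm(\zeta,x_0,\alpha_0)^* \\
&= [\zeta M^D_\pm(\zeta,x_0,\alpha_0)]^*
= \hatt M_{\pm,0,1}(z,x_0)^*.
\end{align*}
The case $j=2$ is identical upon noting that adjoint commutes with inversion (using that $M^D_\pm$ is invertible by \eqref{2.18}). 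Branch-independence of the right-hand sides of \eqref{3.23} and \eqref{3.24}, guaranteed by the symmetries collected in the unnumbered lemma at the end of Section~\ref{s2}, ensures this construction is unambiguous.

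Finally, \eqref{4.24} is an automatic consequence of the established Nevanlinna--Herglotz property: the classical Fatou-type theorem for matrix-valued Nevanlinna--Herglotz functions guarantees existence of nontangential boundary values a.e.\ on $\bbR$. I do not anticipate a genuine obstacle in this proof; the only step demanding mild vigilance is the square-root branch-tracking in the symmetry argument, but this is fully tamed by the supersymmetric symmetries already recorded in Section~\ref{s2}.
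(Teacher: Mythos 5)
Your proposal is correct and follows essentially the same route as the paper: \eqref{4.21} from the identity \eqref{4.19}, \eqref{4.22} by transferring \eqref{2.17} through \eqref{3.23}--\eqref{3.24}, and \eqref{4.24} from the matrix-valued Fatou theorem for Nevanlinna--Herglotz functions (the paper also notes the alternative of quoting \eqref{2.19} directly). The only minor divergence is \eqref{4.23}, which the paper obtains by transferring the rank statement \eqref{2.18} for $M^D_\pm$ through \eqref{3.23}--\eqref{3.24}, whereas you deduce invertibility from the strict definiteness of $\Im\big(\hatt M_{\pm,0,j}\big)$ forced by the normalization $\psi_{\pm,j}(z,x_0,x_0)=I_m$; both arguments are valid.
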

%%%%%%%%%
\begin{proof}
The inequalities in \eqref{4.21} follow immediately from \eqref{4.19}.  By \eqref{3.23} and \eqref{3.24}, $\hatt M_{\pm,0,j}(\, \cdot \,,x_0)$, $j=1,2$, are analytic on $\bbC\backslash \bbR$, thus they are Nevanlinna--Herglotz by \eqref{4.21}.  Relation \eqref{4.22} (resp., \eqref{4.23}) follows from \eqref{3.23} and \eqref{3.24} via \eqref{2.17} (resp., \eqref{2.18}).  Finally, \eqref{4.24} follows from the fact that $\pm \hatt M_{\pm,0,j}(\, \cdot \,,x_0)$, $j=1,2$, are Nevanlinna--Herglotz functions (cf., e.g., \cite{GT00}).  Alternatively, \eqref{4.24} can be immediately inferred from \eqref{3.23} and \eqref{3.24} together with \eqref{2.19}.
\end{proof}
%%%%%%%%%

%%%%%%%%%
\begin{remark}
Above, we used Corollary \ref{c4.2} and Lemma \ref{l4.3} to prove $\pm\hatt M_{\pm,0,j}(\, \cdot \,,x_0)$, $j=1,2$, are Nevanlinna--Herglotz functions.  Alternatively, one can use the following approach based on computing the imaginary parts for $\pm\hatt M_{\pm,0,j}(\, \cdot \,,x_0)$, $j=1,2$, directly using the known representations for $\Im\big(M_{\pm}^D(\zeta,c_0,\alpha_0) \big)$ (cf., \eqref{2.25}). We briefly sketch how this approach is carried out.

Note that the system
\begin{equation}\lb{5.10.1}
\cD\Psi=\zeta \Psi, \quad \zeta \in \bbC\backslash \bbR,
\end{equation}
can be recast as
\begin{equation}\lb{5.10.2}
J\Psi'=[\zeta I_{2m}+B]\Psi, \quad \zeta\in \bbC\backslash\bbR,
\end{equation}
where $J$ is the $2m\times 2m$ matrix defined in \eqref{2.7}, and 
\begin{equation}\lb{5.10.3}
B=B(x)=\begin{pmatrix}
0 & \phi(x)\\
\phi(x) & 0
\end{pmatrix} \, \text{ for a.e.\ $x\in \bbR$.}
\end{equation}
One can then verify by direct computation that if $\Psi_j=\Psi_j(\zeta_j,\, \cdot\,)$, $j=1,2$, denote solutions of \eqref{5.10.2} with $\zeta_j\in \bbC\backslash \bbR$, $j=1,2$, then
\begin{equation}\lb{5.10.4}
\big(\Psi_1^*\mathfrak{S}_1\Psi_2\big)'=-(\zeta_2+\overline{\zeta_1})\Psi_1^*\mathfrak{S}_3\Psi_2,
\end{equation}
where
\begin{equation}\lb{5.10.5}
\mathfrak{S}_1=\begin{pmatrix}
0 & I_m\\
I_m & 0
\end{pmatrix}, \quad 
\mathfrak{S}_3=\begin{pmatrix}
I_m & 0\\
0 & -I_m
\end{pmatrix}.
\end{equation}

Let $z\in \bbC\backslash \bbR$ and fix $\zeta\in \bbC$ with $\zeta^2=z$ and $\Im(\zeta)>0$.  Upon decomposing $\zeta$ and $\pm\hatt M_{\pm,0,1}(\, \cdot \, ,x_0)$ into its real and imaginary parts and using \eqref{3.23}, one computes
\begin{equation}\lb{5.10.6}
\Im\big(\pm\hatt M_{\pm,0,1}(z,x_0) \big)=\Im(\zeta)\Re\big(M_{\pm}^D(\zeta,x_0,\alpha_0) \big)+\Re(\zeta)\Im\big(M_{\pm}^D(\zeta,x_0,\alpha_0) \big).
\end{equation}
Choosing $\Psi_1=\Psi_2=U_{\pm}(\zeta,\, \cdot\, ,x_0,\alpha_0)$ and $\zeta_1=\zeta_2=\zeta$ in \eqref{5.10.4} and integrating over $[x_0,\infty)$ yields
\begin{align}
\Re\big(M_{\pm}^D(\zeta,x_0,\alpha_0) \big)&=\tfrac{1}{2}\lim_{x' \rightarrow \pm\infty}U_{\pm}(\zeta,x',x_0,\alpha_0^*)\mathfrak{S}_1U_{\pm}(\zeta,x',x_0,\alpha_0)\lb{5.10.7}\\
&\quad +\Re(\zeta)\int_{x_0}^{\pm\infty}dx\big[ u_{\pm,1}(\zeta,x,x_0,\alpha_0)^*u_{\pm,1}(\zeta,x,x_0,\alpha_0)\no\\
&\hspace*{2.95cm}-u_{\pm,2}(\zeta,x,x_0,\alpha_0)^*u_{\pm,2}(\zeta,x,x_0,\alpha_0)\big]     \no \\
& \hspace*{-2.5cm} =\Re(\zeta)\int_{x_0}^{\pm\infty}dx\big[ u_{\pm,1}(\zeta,x,x_0,\alpha_0)^*u_{\pm,1}(\zeta,x,x_0,\alpha_0) 
\no \\
&\hspace*{-2.5cm} \quad -u_{\pm,2}(\zeta,x,x_0,\alpha_0)^*u_{\pm,2}(\zeta,x,x_0,\alpha_0)\big],  \lb{5.10.8}
\end{align}
applying \eqref{4.15}. The fact that the supersymmetric nature of $D$ permits the representation 
\eqref{5.10.8} for the real part of $M_{\pm}^D(\zeta,x_0,\alpha_0)$ appears to have gone unnoticed in the literature. By \eqref{2.25}, the representation in \eqref{5.10.6} can be recast as
\begin{align}
\Im\big(\hatt M_{\pm,0,1}(z,x_0)\big)&=2\Im(\zeta)\Re(\zeta)\int_{x_0}^{\pm \infty}dx\, u_{\pm,1}(\zeta,x,x_0,\alpha_0)^*u_{\pm,1}(\zeta,x,x_0,\alpha_0)\no\\
&= \Im(z)\int_{x_0}^{\pm \infty}dx\, \psi_{\pm,1}(z,x,x_0,\alpha_0)^*\psi_{\pm,1}(z,x,x_0,\alpha_0),\lb{5.10.9}
\end{align}
(making use of \eqref{3.21}), implying \eqref{4.19}. The result for $\hatt M_{\pm,0,2}(z,x_0)$ follows similarly. 
\end{remark}
%%%%%%%%%

In order to establish spectral theory for $H_j$, we introduce the $2m\times 2m$ matrix-valued Weyl--Titchmarsh matrix, $\hatt {\mathbf{M}}_j(z,x_0)\in \bbC^{2m\times 2m}$, $z\in \bbC\backslash \bbR$, associated to $H_j$, $j=1,2$, as follows
\begin{align}
\hatt {\mathbf{M}}_j(z,x_0)&=\Big(\hatt {\mathbf{M}}_{j,k,k'}(z,x_0)\Big)_{k,k'=0,1}, \quad z\in\bbC\backslash\bbR, \; j=1,2, \lb{4.25}
\\
\hatt {\mathbf{M}}_{j,0,0}(z,x_0) &= W(z)^{-1},\lb{4.26}
\\
\hatt {\mathbf{M}}_{j,0,1}(z,x_0) &= 2^{-1} W(z)^{-1} \Big[\hatt M_{-,0,j}(z,x_0)+\hatt M_{+,0,j}(z,x_0)\Big],\lb{4.27}
\\
\hatt {\mathbf{M}}_{j,1,0}(z,x_0) &= 2^{-1} \Big[\hatt M_{-,0,j}(z,x_0)+\hatt M_{+,0,j}(z,x_0)\Big] W(z)^{-1},\lb{4.28}
\\
\hatt {\mathbf{M}}_{j,1,1}(z,x_0) &= \hatt M_{\pm,0,j}(z,x_0) W(z)^{-1} \hatt M_{\mp,0,j}(z,x_0),\lb{4.29}
\end{align}
where we have used the abbreviation (cf.\ \eqref{3.36dd})
\begin{align}
\begin{split}
W(z)&=W(\psi_{+,j}(\overline{z},\, \cdot \,,x_0)^*,\psi_{-,j}(z,\, \cdot \,,x_0))  \\
&= \hatt M_{-,0,j}(z,x_0) - \hatt M_{+,0,j}(z,x_0),\quad z\in \bbC\backslash \bbR, \; j=1,2.
\end{split} 
\end{align}

With \eqref{3.28} and the definitions in \eqref{4.25}--\eqref{4.29}, one readily verifies that
\begin{align}
\hatt {\mathbf{M}}_1(z,x_0)&=\begin{pmatrix}\zeta^{-1}I_m & 0 \\ 0 & I_m \end{pmatrix}\mathbf{M}^D(\zeta,x_0,\alpha_0)\begin{pmatrix}I_m & 0 \\ 0 & \zeta I_m \end{pmatrix},\lb{4.67}\\
\hatt {\mathbf{M}}_2(z,x_0)&=\begin{pmatrix}-\zeta^{-1}M_+^D(\zeta,x_0,\alpha_0) & 0 \\ 0 & M_-^D(\zeta,x_0,\alpha_0)^{-1} \end{pmatrix}\mathbf{M}^D(\zeta,x_0,\alpha_0)\lb{4.81}\\
&\quad \times\begin{pmatrix}-M_-^D(\zeta,x_0,\alpha_0) & 0 \\ 0 & \zeta M_+^D(\zeta,x_0,\alpha_0)^{-1} \end{pmatrix}, \quad \zeta^2=z,\, \zeta\in \bbC\backslash \bbR.\no
\end{align}

In addition, one notes that $\hatt {\mathbf{M}}_j(z,x_0)$ is a $\bbC^{2m\times 2m}$-valued Nevanlinna--Herglotz matrix with representation
\begin{align}
\begin{split}
& \hatt {\mathbf M}_j(z,x_0)=\mathbf{C}_j(x_0)+\int_{\bbR}
d\hatt {\mathbf{\Omega}}_j (\lambda,x_0)\bigg[\frac{1}{\lambda -z}-\frac{\lambda}
{1+\lambda^2}\bigg], \quad z\in\bbC\backslash\bbR, \lb{4.30} \\
& \mathbf{F}_j(x_0)=\mathbf{F}_j(x_0)^*, \quad \int_{\bbR}
\big\|d\hatt {\mathbf{\Omega}}_j(\lambda,x_0)\big\|_{\bbC^{2m}}(1+\lambda^2)^{-1} <\infty, \quad j=1,2.
\end{split}
\end{align}
The Stieltjes inversion formula for the nonnegative $2m\times 2m$ matrix-valued measure $d\hatt {\mathbf{\Omega}}_j(\, \cdot \, ,x_0)$ then reads
\begin{align}
\hatt {\mathbf{\Omega}}_j((\lambda_1,\lambda_2],x_0)
=\frac{1}{\pi} \lim_{\delta\downarrow 0}
\lim_{\varepsilon\downarrow 0} \int^{\lambda_2+\delta}_{\lambda_1+\delta}
d\lambda \, \Im\big(\hatt {\mathbf{M}}_j(\lambda +i\varepsilon,x_0)\big),&\lb{4.31}\\
 \lambda_1, \lambda_2 \in\bbR, \; \lambda_1<\lambda_2,\, j=1,2.&\no
\end{align}
In particular, $d\hatt {\mathbf{\Omega}}_j(\, \cdot \, ,x_0)$, $j=1,2$, is a $2\times 2$ block matrix-valued measure with $\bbC^{m\times m}$-valued entries $d\hatt {\mathbf{\Omega}}_{j,\ell,\ell'}(\, \cdot \,,x_0)$, $\ell,\ell'=0,1$. Since the diagonal entries of $\hatt {\mathbf M}_j(\, \cdot \,,x_0)$ are Nevanlinna--Herglotz functions, the diagonal entries of the measure $d\hatt {\mathbf{\Omega}}_j(\, \cdot \,,x_0)$ are nonnegative $\bbC^{m\times m}$-valued measures. The off-diagonal entries of the measure $d\hatt {\mathbf{\Omega}}_j(\, \cdot \,,x_0)$ naturally admit decompositions into a linear combination of four nonnegative matrix-valued measures.

Next, we relate the family of spectral projections $\{E_{H_j}(\lambda)\}_{\lambda\in \bbR}$ of the self-adjoint operator $H_j$ and the $2m\times 2m$ matrix-valued increasing spectral function $\hatt {\mathbf{\Omega}}_j(\lambda, x_0)$, $\lambda \in \bbR$, which generates the matrix-valued measure in the Nevanlinna--Herglotz representation \eqref{4.30} of $\hatt {\mathbf{M}}_j(\, \cdot \,,x_0)$, $j=1,2$.  

We note that for $F\in C(\bbR)$,
\begin{align}
&\big(f,F(H_j)g\big)_{L^2(\bbR)^m}= \int_{\bbR}
d\,\big(f,E_{H_j}(\lambda)g\big)_{L^2(\bbR)^m}\,
F(\lambda), \lb{4.32} \\
& f, g \in\dom(F(H_j)) =\bigg\{h\in L^2(\bbR)^m \,\bigg|\,
\int_{\bbR} d \|E_{H_j}(\lambda)h\|_{L^2(\bbR)^m}^2 \, |F(\lambda)|^2
< \infty\bigg\}, \no\\
&\hspace*{10.1cm}j=1,2.\no
\end{align}

%%%%%%%%%%%%%%%%%%%
\begin{theorem} \lb{t4.5}
Assume Hypothesis \ref{h2.1} and let $f,g \in C^\infty_0(\bbR)^m$,
$F\in C(\bbR)$, $x_0\in\bbR$, and $\lambda_1, \lambda_2 \in\bbR$,
$\lambda_1<\lambda_2$. Then,
\begin{align} \lb{4.33}
&\big(f,F(H_j)E_{H_j}((\lambda_1,\lambda_2])g\big)_{L^2(\bbR)^m} 
\no \\
&\quad =
\big(\hatt
f_j(\, \cdot \,,x_0),M_FM_{\chi_{(\lambda_1,\lambda_2]}} \hatt
g_j(\, \cdot \,,x_0)\big)_{L^2(\bbR;d\hatt {\mathbf{\Omega}}_j(\, \cdot \,,x_0))},\quad j=1,2,
\end{align}
where we introduced the notation
\begin{align} \lb{4.34}
&\hatt h_{j,0}(\lambda,x_0) = \int_\bbR dx \,
c_j(\lambda,x,x_0)^* h(x),  \quad
\hatt h_{j,1}(\lambda,x_0) = \int_\bbR dx \,
s_j(\lambda,x,x_0)^* h(x)
\no
\\
&\hatt h_j(\lambda,x_0) = \big(\,\hatt h_{j,0}(\lambda,x_0),
\hatt h_{j,1}(\lambda,x_0)\big)^\top,  \quad
\lambda \in\bbR, \;  h\in C^\infty_0(\bbR)^m,\quad j=1,2,
\end{align}
and $M_G$ denotes the maximally defined operator of multiplication
by the function $G \in C(\bbR)$ in the
Hilbert space $L^2(\bbR;d\hatt {\mathbf{\Omega}}_j(\, \cdot \,,x_0))$,
\begin{align}
\begin{split}
& \big(M_G\hatt h\big)(\lambda)=G(\lambda)\hatt h(\lambda)
=\big(G(\lambda) \hatt h_0(\lambda), G(\lambda) \hatt h_1(\lambda)\big)^\top
\, \text{ for \ $\hatt {\mathbf{\Omega}}_j(\, \cdot \,,x_0)$-a.e.\ $\lambda\in\bbR$}, \lb{4.35} \\
& \hatt h\in\dom(M_G)=\big\{\hatt k \in
L^2(\bbR;d\hatt {\mathbf{\Omega}}_j(\, \cdot \, ,x_0)) \,\big|\,
G\hatt k \in L^2(\bbR;d\hatt {\mathbf{\Omega}}_j(\, \cdot \,,x_0))\big\},\quad j=1,2.
\end{split}
\end{align}
\end{theorem}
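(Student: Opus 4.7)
The plan is to apply Stone's formula to the resolvent $R_j(z)=(H_j-zI)^{-1}$, expand the Green's function \eqref{3.33} into a sum of terms matching the entries of $\hatt{\mathbf{M}}_j(z,x_0)$ plus a residual that vanishes in the Stone limit, and then invoke the Stieltjes inversion formula \eqref{4.31} for the matrix-valued measure $d\hatt{\mathbf{\Omega}}_j(\cdot,x_0)$.  By \eqref{4.32} and the boundedness of $F$ on $(\lambda_1,\lambda_2]$, the statement \eqref{4.33} reduces to the case $F\equiv 1$, so it suffices to identify $d(f,E_{H_j}(\cdot)g)_{L^2(\bbR)^m}$ with $\sum_{k,k'=0}^{1}\hatt f_{j,k}(\cdot,x_0)^{*}\,d\hatt{\mathbf{\Omega}}_{j,k,k'}(\cdot,x_0)\,\hatt g_{j,k'}(\cdot,x_0)$ on bounded intervals.

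The key algebraic identity is
\[
G_j(z,x,x')=\sum_{k,k'=0}^{1}c^{(k)}_j(z,x,x_0)\,\hatt{\mathbf{M}}_{j,k,k'}(z,x_0)\,c^{(k')}_j(\ol z,x',x_0)^{*}+\Delta_j(z,x,x'),
\]
where $c^{(0)}_j=c_j$ and $c^{(1)}_j=s_j$.  To derive it, I would insert $\psi_{\pm,j}(z,x,x_0)=c_j(z,x,x_0)+s_j(z,x,x_0)\hatt M_{\pm,0,j}(z,x_0)$ from \eqref{3.25} and, via \eqref{4.22}, its adjoint form $\psi_{\pm,j}(\ol z,x',x_0)^{*}=c_j(\ol z,x',x_0)^{*}+\hatt M_{\pm,0,j}(z,x_0)\,s_j(\ol z,x',x_0)^{*}$ into \eqref{3.33}, and expand separately for $x\lessgtr x'$.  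Setting $W(z)=\hatt M_{-,0,j}(z,x_0)-\hatt M_{+,0,j}(z,x_0)$, the algebraic identities $W(z)^{-1}\hatt M_{\pm,0,j}(z,x_0)=\hatt{\mathbf{M}}_{j,0,1}(z,x_0)\mp\tfrac{1}{2}I_m$ and $\hatt M_{\pm,0,j}(z,x_0)W(z)^{-1}=\hatt{\mathbf{M}}_{j,1,0}(z,x_0)\mp\tfrac{1}{2}I_m$ (direct consequences of \eqref{4.26}--\eqref{4.29} together with $W(z)^{-1}(\hatt M_{+,0,j}-\hatt M_{-,0,j})=-I_m$) turn the expansion into the main sum above, with the $\mp\tfrac{1}{2}I_m$-pieces from the two cases combining into
$\Delta_j(z,x,x')=\tfrac{1}{2}\sgn(x'-x)\big[s_j(z,x,x_0)c_j(\ol z,x',x_0)^{*}-c_j(z,x,x_0)s_j(\ol z,x',x_0)^{*}\big]$.
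Crucially, $\Delta_j(\cdot,x,x')$ is entire in $z$ for each fixed $(x,x')$, since $c_j(\ol z,x',x_0)^{*}$ and $s_j(\ol z,x',x_0)^{*}$ are Schwarz reflections of entire matrix-valued functions and hence holomorphic in $z$.

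Plugging into Stone's formula, $(f,E_{H_j}((\lambda_1,\lambda_2])g)_{L^2(\bbR)^m}=\lim_{\delta\downarrow 0}\lim_{\varepsilon\downarrow 0}(2\pi i)^{-1}\int_{\lambda_1+\delta}^{\lambda_2+\delta}[(f,R_j(\lambda+i\varepsilon)g)-(f,R_j(\lambda-i\varepsilon)g)]\,d\lambda$, the $\Delta_j$-contribution drops out because $\Delta_j(\lambda+i\varepsilon,x,x')-\Delta_j(\lambda-i\varepsilon,x,x')\to 0$ uniformly on the compact set $[\lambda_1,\lambda_2+1]\times\supp f\times\supp g$.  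For the principal terms, holomorphy of $c^{(k)}_j(\cdot,x,x_0)$ gives $c^{(k)}_j(\lambda\pm i\varepsilon,x,x_0)=c^{(k)}_j(\lambda,x,x_0)+O(\varepsilon)$ uniformly; pulling the $\lambda$-evaluated factors outside the spectral difference, applying Stieltjes inversion \eqref{4.31} to convert $(2\pi i)^{-1}\int[\hatt{\mathbf{M}}_{j,k,k'}(\lambda+i\varepsilon,x_0)-\hatt{\mathbf{M}}_{j,k,k'}(\lambda-i\varepsilon,x_0)]\,d\lambda$ into integration against $d\hatt{\mathbf{\Omega}}_{j,k,k'}(\cdot,x_0)$, and interchanging the $\lambda$- and $(x,x')$-integrations by Fubini (allowed by the compact support of $f,g$), one recognizes $\int f(x)^{*}c^{(k)}_j(\lambda,x,x_0)\,dx=\hatt f_{j,k}(\lambda,x_0)^{*}$ and $\int c^{(k')}_j(\lambda,x',x_0)^{*}g(x')\,dx'=\hatt g_{j,k'}(\lambda,x_0)$, producing \eqref{4.33} for $F\equiv 1$; the general case then follows from \eqref{4.32}.

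The principal technical obstacle is the rigorous justification of the $\varepsilon\downarrow 0$ limit in the presence of the $O(\varepsilon)$-corrections multiplied by $\hatt{\mathbf{M}}_{j,k,k'}(\lambda\pm i\varepsilon,x_0)$, whose boundary values on $\bbR$ may develop singularities.  The essential estimate is the uniform bound $\int_{\lambda_1-1}^{\lambda_2+1}\|\Im\hatt{\mathbf{M}}_j(\lambda+i\varepsilon,x_0)\|_{\bbC^{2m\times 2m}}\,d\lambda\le C(\lambda_1,\lambda_2)$ for small $\varepsilon>0$, which follows from the Poisson-kernel representation of $\Im\hatt{\mathbf{M}}_j$ implicit in the Nevanlinna--Herglotz formula \eqref{4.30} together with the local finiteness of $d\hatt{\mathbf{\Omega}}_j(\cdot,x_0)$; combined with a similar resolvent-based bound on the real part, dominated convergence then legitimates the term-by-term passage to the limit.
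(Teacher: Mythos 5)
Your proof is correct and follows essentially the same route as the paper's: Stone's formula applied to the Green's function \eqref{3.33}, expansion of the Weyl solutions via \eqref{3.25}, regrouping into the block entries \eqref{4.26}--\eqref{4.29} of $\hatt{\mathbf{M}}_j$, and Stieltjes inversion \eqref{4.31}. The only stylistic difference is that you collect the leftover $\mp\tfrac12 I_m$ pieces arising from $W^{-1}\hatt M_{\pm,0,j}$ and $\hatt M_{\pm,0,j}W^{-1}$ into an explicitly entire residual $\Delta_j(z,x,x')$ whose Stone contribution visibly vanishes, whereas the paper absorbs the same pieces via the $\Im$-symmetrization identities \eqref{4.41}; both encode the identical algebraic observation. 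One small caveat on the $\varepsilon\downarrow 0$ limit: the paper's estimate $\varepsilon\,\|\Re(\hatt{\mathbf{M}}_j(\lambda+i\varepsilon,x_0))\|_{\bbC^{2m\times 2m}}=\oh(1)$ in \eqref{4.40} is what kills the cross terms between the $\Oh(\varepsilon)$ Taylor corrections \eqref{4.39} of $c_j,s_j$ and the real part of $\hatt{\mathbf{M}}_j$; a Poisson-kernel $L^1_{\loc}$ bound on $\Im\hatt{\mathbf{M}}_j$ alone controls the principal term but not these cross terms, so you should make your ``similar resolvent-based bound on the real part'' explicit --- it is precisely the second line of \eqref{4.40}.
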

%%%%%%%%%%%%%%%%%
\begin{proof} 
We fix $j \in \{1,2\}$. Using the weak version of Stone's formula, one obtains
\begin{align}
&\big(f,F(H_j)E_{H_j}((\lambda_1,\lambda_2])g\big)_{L^2(\bbR)^m}  
\no \\
& \quad = \lim_{\delta\downarrow 0}\lim_{\varepsilon\downarrow 0}
\frac{1}{2\pi i} \int_{\lambda_1+\delta}^{\lambda_2+\delta}
d\lambda \, F(\lambda) \big[\big(f,(H_j-(\lambda+i\varepsilon)I_{L^2(\bbR)^m})^{-1}g\big)_{L^2(\bbR)^m}  
\no \\
& \hspace*{4.9cm} - \big(f,(H_j-(\lambda-i\varepsilon)I_{L^2(\bbR)^m})^{-1}g\big)_{L^2(\bbR)^m}\big].  
\lb{4.36} 
\end{align}
Using that the resolvent of $H_j$ is an integral operator with kernel \eqref{3.33} in \eqref{4.36}, and freely interchanging the $dx$ and $dx'$ integrals with the limits and the
$d\lambda$ integral (since all integration domains are finite and all
integrands are continuous), and employing the expressions \eqref{3.25} for
$\psi_{\pm,j}(z,x,x_0)$, one obtains
\begin{align}
&\big(f,F(H_j)E_{H_j}((\lambda_1,\lambda_2])g\big)_{L^2(\bbR)^m}
=\int_{\bbR} dx \bigg(f(x), \bigg\{ \int_{-\infty}^x dx' \, \no
\\
& \quad \times \lim_{\delta\downarrow
0}\lim_{\varepsilon\downarrow 0} \frac{1}{2\pi i}
\int_{\lambda_1+\delta}^{\lambda_2+\delta} d\lambda \, F(\lambda)
\Big[\big[c_j(\lambda,x,x_0) +
s_j(\lambda,x,x_0) \hatt M_{+,0,j}(\lambda+i\varepsilon,x_0) \big] \no
\\
& \hspace*{1.2cm} \times W(\lambda+i\varepsilon)^{-1}
\big[c_j(\lambda,x',x_0)^* + \hatt M_{-,0,j}(\lambda+i\varepsilon,x_0) s_j(\lambda,x',x_0)^* \big]
g(x') \no
\\
& \qquad -\big[c_j(\lambda,x,x_0) +
s_j(\lambda,x,x_0) \hatt M_{+,0,j}(\lambda-i\varepsilon,x_0) \big] \no
\\
& \hspace*{1.2cm} \times W(\lambda-i\varepsilon)^{-1}
\big[c_j(\lambda,x',x_0)^* + \hatt M_{-,0,j}(\lambda-i\varepsilon,x_0) s_j(\lambda,x',x_0)^* \big]
g(x') \Big] \no
\\
& \quad +\int_x^\infty dx'\, \lim_{\delta\downarrow 0}
\lim_{\varepsilon\downarrow 0} \frac{1}{2\pi i}
\int_{\lambda_1+\delta}^{\lambda_2+\delta} d\lambda \, F(\lambda)
\lb{4.38} \\
& \qquad \times \Big[\big[c_j(\lambda,x,x_0) +
s_j(\lambda,x,x_0) \hatt M_{-,0,j}(\lambda+i\varepsilon,x_0) \big]
\no \\
& \hspace*{1.2cm} \times W(\lambda+i\varepsilon)^{-1}
\big[c_j(\lambda,x',x_0)^* +
\hatt M_{+,0,j}(\lambda+i\varepsilon,x_0) s_j(\lambda,x',x_0)^* \big]
g(x') \no
\\
& \qquad -\big[c_j(\lambda,x,x_0) +
s_j(\lambda,x,x_0) \hatt M_{-,0,j}(\lambda-i\varepsilon,x_0) \big]
\no
\\
& \hspace*{1.2cm} \times W(\lambda-i\varepsilon)^{-1} \big[c_j(\lambda,x',x_0)^* \no\\
&\hspace*{1.7cm}+
\hatt M_{+,0,j}(\lambda-i\varepsilon,x_0) s_j(\lambda,x',x_0)^* \big]
g(x') \Big]\bigg\}\bigg)_{\bbC^m}.   \no
\end{align}
Here we employed \eqref{4.22}, the fact that for fixed $x\in\bbR$,
$c_j(z,x,x_0)$ and $s_j(z,x,x_0)$ are entire
with respect to $z$, that $c_j(z,\, \cdot \, ,x_0), s_j(z,\, \cdot \, ,x_0) \in AC_{\loc}(\bbR;\cH)$, and hence that
\begin{align}
\begin{split} 
c_j(\lambda\pm i\varepsilon,x,x_0)
&\underset{\varepsilon\downarrow 0}{=}
c_j(\lambda,x,x_0) \pm
i\varepsilon (d/dz)c_j(z,x,x_0)|_{z=\lambda} + \Oh(\varepsilon^2),   \\
s_j(\lambda\pm i\varepsilon,x,x_0)
&\underset{\varepsilon\downarrow 0}{=} s_j(\lambda,x,x_0)
\pm i\varepsilon (d/dz)s_j(z,x,x_0)|_{z=\lambda}
+ \Oh(\varepsilon^2),   \lb{4.39} 
\end{split}  
\end{align}
with $\Oh(\varepsilon^2)$ being uniform with respect to $(\lambda,x)$ as long as $\lambda$ and $x$ vary in compact subsets of $\bbR$. Moreover, we used that
\begin{align}
&\varepsilon\big\|\hatt {\mathbf{M}}_j(\lambda+i\varepsilon,x_0)\big\|_{\bbC^{2m\times 2m}}\leq
C(\lambda_1,\lambda_2,\varepsilon_0,x_0), \quad \lambda\in
[\lambda_1,\lambda_2], \; 0<\varepsilon\leq\varepsilon_0, \no \\
&\varepsilon
\big\|\Re\big(\hatt {\mathbf{M}}_j(\lambda+i\varepsilon,x_0)\big)\big\|_{\bbC^{2m\times 2m}}
\underset{\varepsilon\downarrow 0}{=}\oh(1), \quad \lambda\in\bbR,     \lb{4.40}
\end{align}
since $\hatt {\mathbf{M}}_j(\,\cdot \,,x_0)$, are $\bbC^{2m\times 2m}$-valued Nevanlinna--Herglotz functions.  Moreover, we utilized \eqref{4.22}, \eqref{4.39},
\eqref{4.40}, and the elementary facts
\begin{align} \lb{4.41}
&\Im\big[\hatt M_{\pm,0,j}(\lambda+i\varepsilon,x_0) W(\lambda+i\varepsilon)^{-1}\big]
\no
\\
&\quad = \f{1}{2}
\Im\big[[\hatt M_{-,0,j}(\lambda+i\varepsilon,x_0) + \hatt M_{+,0,j}(\lambda+i\varepsilon,x_0)] W(\lambda+i\varepsilon)^{-1}\big],
\no
\\
&\Im\big[W(\lambda+i\varepsilon)^{-1} \hatt M_{\pm,0,j}(\lambda+i\varepsilon,x_0)\big] \\
&\quad = \f{1}{2}
\Im\big[W(\lambda+i\varepsilon)^{-1} [\hatt M_{-,0,j}(\lambda+i\varepsilon,x_0) + \hatt M_{+,0,j}(\lambda+i\varepsilon,x_0)]\big],
\quad \lambda\in\bbR, \; \varepsilon >0.    \no
\end{align}
Collecting appropriate terms in \eqref{4.38} then yields
\begin{align} \lb{4.42}
&\big(f,F(H_j)E_{H}((\lambda_1,\lambda_2])g\big)_{L^2(\bbR)^m}
= \int_\bbR dx \bigg(f(x), \int_\bbR dx'
\lim_{\delta\downarrow 0}\lim_{\varepsilon\downarrow 0}
\frac{1}{\pi} \int_{\lambda_1+\delta}^{\lambda_2+\delta} d\lambda
\, F(\lambda)
\no \\
& \ \times\Big\{
c_j(\lambda,x,x_0)
\Im\big[W(\lambda+i\varepsilon)^{-1}\big]
c_j(\lambda,x',x_0)^*
\\
& \quad
+ 2^{-1} c_j(\lambda,x,x_0)
\no \\
& \qquad
\times \Im\big[W(\lambda+i\varepsilon)^{-1} [\hatt M_{-,0,j}(\lambda+i\varepsilon,x_0) + \hatt M_{+,0,j}(\lambda+i\varepsilon,x_0)]\big]
s_j(\lambda,x',x_0)^*
\no \\
& \quad
+ 2^{-1} s_j(\lambda,x,x_0)
\no \\
& \qquad
\times \Im\big[[\hatt M_{-,0,j}(\lambda+i\varepsilon,x_0)+
\hatt M_{+,0,j}(\lambda+i\varepsilon,x_0)] W(\lambda+i\varepsilon)^{-1}\big] c_j(\lambda,x',x_0)^*
\no \\
& \quad
+s_j(\lambda,x,x_0)
\no \\
& \qquad
\times 
\Im\big[\hatt M_{-,0,j}(\lambda+i\varepsilon,x_0)
W(\lambda+i\varepsilon)^{-1}\no\\
&\hspace*{1.7cm}\times
\hatt M_{+,0,j}(\lambda+i\varepsilon,x_0)\big]
s_j(\lambda,x',x_0)^*\Big\} g(x') \bigg)_{\bbC^m}.    \no
\end{align}
Since by \eqref{4.31} (for $\ell, \ell'=0,1$)
\begin{align}
\begin{split}
&\int_{(\lambda_1,\lambda_2]} d\hatt {\mathbf{\Omega}}_{j,\ell,\ell'}(\lambda,x_0)
= \hatt {\mathbf{\Omega}}_{j,\ell,\ell'}((\lambda_1,\lambda_2],x_0) \\
& \quad =
\lim_{\delta\downarrow 0}\lim_{\varepsilon\downarrow 0}
\frac{1}{\pi}\int_{\lambda_1+\delta}^{\lambda_2+\delta} d\lambda \,
\Im\big(\hatt {\mathbf{M}}_{j,\ell,\ell'}(\lambda+i\varepsilon,x_0)\big),     \lb{4.43}
\end{split}
\end{align}
one also has (again for $\ell, \ell' \in\{0,1\}$)
\begin{align}
&\int_{\bbR} d\hatt\Omega_{j,\ell,\ell'}(\lambda,x_0)\, h(\lambda) =
\lim_{\varepsilon\downarrow 0} \frac{1}{\pi}\int_{\bbR} d\lambda \,
\Im\big(\hatt {\mathbf{M}}_{j,\ell,\ell'}(\lambda+i\varepsilon,x_0)\big)\, h(\lambda),
\quad h\in C_0(\bbR)^m,  \lb{4.44} \\
&\int_{(\lambda_1,\lambda_2]} d\hatt {\mathbf{\Omega}}_{j,\ell,\ell'}(\lambda,x_0)
\, k(\lambda) =
\lim_{\delta\downarrow 0} \lim_{\varepsilon\downarrow 0} \frac{1}{\pi}
\int_{\lambda_1+\delta}^{\lambda_2+\delta} d\lambda \,
\Im\big(\hatt {\mathbf{M}}_{j,\ell,\ell'}(\lambda+i\varepsilon,x_0)\big)\, k(\lambda), \no
\\
& \hspace*{9.2cm} k\in C(\bbR)^m. \lb{4.45}
\end{align}
Then using \eqref{4.25}--\eqref{4.29}, \eqref{4.34}, and interchanging the $dx$, $dx'$ and $d\hatt {\mathbf{\Omega}}_{j,\ell,\ell'}(\, \cdot \,,x_0)$, $\ell,\ell'=0,1$, integrals
once more, one concludes from \eqref{4.42} that 
\begin{align}
\begin{split} 
&\big(f,F(H_j)E_{H_j}((\lambda_1,\lambda_2])g\big)_{L^2(\bbR)^m}    \\
& \quad = \int_{(\lambda_1,\lambda_2]} F(\lambda) \, \big(\hatt
f_j(\lambda,x_0), d\hatt {\mathbf{\Omega}}_j(\lambda,x_0) \,
\hatt g_j(\lambda,x_0) \big)_{\bbC^{2m}},     \lb{4.46}
\end{split} 
\end{align}
implying \eqref{4.33}.
\end{proof}
%%%%%%%%%%%%%%%%%

Next, we improve on Theorem \ref{t4.5} and remove the compact support restrictions on $f$ 
and $g$ in the usual way, closely following and appropriately adapting the argument of \cite[(2.46)--(2.67)]{GZ06}. This leads to a variant of the spectral theorem for (functions of) $H_j$, $j=1,2$.
We consider the map
\begin{align}
&\widetilde U_j(x_0) : \begin{cases} C_0^\infty(\bbR)^m\to
L^2(\bbR;d\hatt {\mathbf{\Omega}}_j(\, \cdot \,,x_0))
\\[1mm]
h \mapsto \hatt h_j(\, \cdot \,,x_0)
=\big(\,\hatt h_{j,0}(\lambda,x_0),
\hatt h_{j,1}(\lambda,x_0)\big)^\top, \end{cases} \lb{4.47}
\\
&  \hatt h_{j,0}(\lambda,x_0)=\int_\bbR dx \,
c_j(\lambda,x,x_0)^* h(x), \quad \hatt h_{j,1}(\lambda,x_0)=\int_\bbR dx \, s_j(\lambda,x,x_0)^* h(x).   \no
\end{align}
Taking $f=g$, $F=1$, $\lambda_1\downarrow -\infty$, and
$\lambda_2\uparrow \infty$ in \eqref{4.33} then shows that $\widetilde
U_j(x_0)$, $j=1,2$, are densely defined isometries in $L^2(\bbR)^m$,
which extend by continuity to isometries on $L^2(\bbR)^m$. The latter are denoted by $U_j(x_0)$ and are defined by
\begin{align}
&U_j(x_0) : \begin{cases} L^2 (\bbR)^m\to
L^2(\bbR;d\hatt {\mathbf{\Omega}}_j(\, \cdot \,,x_0)) \\[1mm]
h \mapsto \hatt h_j(\, \cdot \,,x_0)
= \big(\,\hatt h_{j,0}(\, \cdot \,,x_0),
\hatt h_{j,1}(\, \cdot \,,x_0)\big)^\top, \end{cases} \lb{4.48} \\
& \hatt h_j(\, \cdot \,,x_0)=\begin{pmatrix}
\hatt h_{j,0}(\, \cdot \,,x_0) \\
\hatt h_{j,1}(\, \cdot \,,x_0) \end{pmatrix}=
\slimes_{a\downarrow -\infty, b \uparrow\infty} \begin{pmatrix}
\int_{a}^b dx \, c_j(\, \cdot \,,x,x_0)^* h(x) \\
\int_{a}^b dx \, s_j(\, \cdot \,,x,x_0)^* h(x) \end{pmatrix}, \no 
\end{align}
where $\slimes$ refers to the $L^2(\bbR;d\hatt {\mathbf{\Omega}}_j(\, \cdot \, ,x_0))$-limit.

%%%%%%%%%%
\begin{theorem} \lb{t4.6} 
Assume Hypothesis \ref{h2.1} and let $F\in C(\bbR)$ and $x_0\in\bbR$. Then,
\begin{equation}\lb{4.63}
U_j(x_0) F(H_j)U_j(x_0)^{-1} = M_F,\quad j=1,2,
\end{equation}
in $L^2(\bbR;d\hatt {\mathbf{\Omega}}_j(\, \cdot \,,x_0))$ $($cf.\ \eqref{4.35}$)$.
Moreover,
\begin{equation}\lb{4.64}
\sigma(H_j)=\supp \big(d\hatt {\mathbf{\Omega}}_j(\, \cdot \,,x_0)\big),\quad j=1,2,
\end{equation}
and the multiplicity of the spectrum of $H_j$, $j=1,2$, is at most equal to $2m$.
\end{theorem}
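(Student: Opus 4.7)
My plan is to upgrade Theorem \ref{t4.5} from compactly supported test functions to the full spectral theorem, and then deduce the spectrum and multiplicity statements by general principles. The isometric map $U_j(x_0)$ of \eqref{4.48} is already in hand; the substantive content is that it is unitary.

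First, taking $F\equiv 1$, $f=g\in C_0^\infty(\bbR)^m$, and letting $\lambda_1\downarrow -\infty$, $\lambda_2\uparrow\infty$ in \eqref{4.33} confirms that $\widetilde U_j(x_0)$ is a densely defined isometry whose closure is $U_j(x_0)$. Next, for $F\in C_b(\bbR)$ and $f,g\in C_0^\infty(\bbR)^m$, the same limiting procedure together with dominated convergence (using that $E_{H_j}(\,\cdot\,)$ is uniformly bounded and $F$ is bounded) yields
\begin{equation*}
\big(f, F(H_j) g\big)_{L^2(\bbR)^m}=\big(\hatt f_j(\,\cdot\,,x_0),M_F\hatt g_j(\,\cdot\,,x_0)\big)_{L^2(\bbR;d\hatt{\mathbf{\Omega}}_j(\,\cdot\,,x_0))}.
\end{equation*}
By polarization, continuity in $f,g$, and truncation of $F$, this extends to all $f,g\in L^2(\bbR)^m$ and all $F\in C(\bbR)$ with $g\in\dom(F(H_j))$. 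Equivalently, $U_j(x_0)F(H_j)\subseteq M_F U_j(x_0)$.

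The principal step is to upgrade $U_j(x_0)$ to a unitary map, after which \eqref{4.63} is immediate. Since $U_j(x_0)$ is isometric, $\ran(U_j(x_0))$ is closed; moreover, by the intertwining relation just derived, $\ran(U_j(x_0))$ is invariant under $M_F$ for every $F\in C_b(\bbR)$. The cleanest route is to transport unitarity from $D$: by \eqref{4.67}, \eqref{4.81}, the Weyl--Titchmarsh matrix $\hatt{\mathbf M}_j(z,x_0)$ is conjugate (up to constant-in-$z$ matrix factors) to a $\zeta$-dependent transform of $\mathbf M^D(\zeta,x_0,\alpha_0)$ with $\zeta^2=z$, and a direct computation of the associated Stieltjes-inversion measures then shows that $d\hatt{\mathbf\Omega}_j(\,\cdot\,,x_0)$ is the push-forward under $\zeta\mapsto\zeta^2$ of an appropriately dressed version of $d\mathbf{\Omega}^D(\,\cdot\,,x_0,\alpha_0)$. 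Since the spectral transformation for the supersymmetric Dirac operator $D$ is known to be unitary (Theorem~\ref{t2.4}), the analogous map for $H_j$ inherits this property, proving $\ran(U_j(x_0))=L^2(\bbR;d\hatt{\mathbf{\Omega}}_j(\,\cdot\,,x_0))$.

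With unitarity in hand, \eqref{4.63} follows from the intertwining identity. The spectrum statement \eqref{4.64} is then standard: under $U_j(x_0)$, $H_j$ is unitarily equivalent to the multiplication operator $M_{\mathrm{id}}$ in $L^2(\bbR;d\hatt{\mathbf{\Omega}}_j(\,\cdot\,,x_0))$, whose spectrum equals $\supp(d\hatt{\mathbf{\Omega}}_j(\,\cdot\,,x_0))$. The multiplicity bound is equally structural: in $L^2(\bbR;d\mathbf{\Omega})$ for a $2m\times 2m$ matrix-valued measure $d\mathbf{\Omega}$, the operator $M_{\mathrm{id}}$ has spectral multiplicity at most $2m$. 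The anticipated main obstacle is the surjectivity of $U_j(x_0)$; attempting a direct density/Stone--Weierstrass argument in $L^2(\bbR;d\hatt{\mathbf{\Omega}}_j(\,\cdot\,,x_0))$ is possible but delicate because of the block structure of the measure, so routing the argument through the known spectral representation for $D$ via \eqref{4.67}, \eqref{4.81} appears to be the most efficient path.
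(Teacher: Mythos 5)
Your proposal correctly identifies the crux of the matter — surjectivity of $U_j(x_0)$ — and the isometry/intertwining steps track the paper closely. But your route to surjectivity is genuinely different from the paper's, and it contains a real gap.

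You propose to transport unitarity from the Dirac spectral representation by reading off a push-forward relation between $d\hatt{\mathbf{\Omega}}_j(\,\cdot\,,x_0)$ and $d\mathbf{\Omega}^D(\,\cdot\,,x_0,\alpha_0)$ from \eqref{4.67}, \eqref{4.81}. This is not a routine step, and your description of the conjugating factors as ``constant-in-$z$'' is incorrect. In \eqref{4.67} the conjugators are $\diag(\zeta^{-1}I_m,I_m)$ and $\diag(I_m,\zeta I_m)$ with $\zeta=z^{1/2}$, and in \eqref{4.81} they even involve $M_\pm^D(\zeta,x_0,\alpha_0)$ and its inverse, both of which are nontrivial $\zeta$-dependent Herglotz objects. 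Pushing such $\zeta$-dependent similarity transforms through the Stieltjes inversion to identify the measures (and then relating the whole-line Dirac spectral parameter $\nu$ to the half-line Schr\"odinger parameter $\lambda=\nu^2$, where the supersymmetry $\mathfrak{S}_3$ and the even/odd structure of $d\mathbf{\Omega}^D$ would have to be invoked) is precisely the substantial work you are deferring to a ``direct computation.'' As written, the surjectivity claim is unproved.

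The paper avoids this entirely: it proves surjectivity by a self-contained argument that does not route through $D$. It introduces the candidate inverse $W_j(x_0)$ via \eqref{4.53}, shows it is a contraction with $W_j(x_0)U_j(x_0)=I$, and then proves $\ker(W_j(x_0))=\{0\}$ by intertwining $W_j(x_0)$ with the resolvent of $H_j$ as in \eqref{a4.59}--\eqref{a4.60}, passing to compactly supported approximations, and applying Stieltjes inversion to the resulting finite complex measure \eqref{a4.64}. Evaluating at $y=x_0$ and then applying the quasi-derivative $[1,j]$ and evaluating again at $x_0$, using the normalizations \eqref{3.17}, \eqref{3.18}, extracts both block components of $d\hatt{\mathbf{\Omega}}_j\,\hatt f_0$ and forces $\hatt f_0=0$ a.e. This is a more elementary and fully explicit route. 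If you wish to pursue the Dirac-transport idea, you would need to explicitly work out the measure relation implied by \eqref{4.67}, \eqref{4.81}, including the supersymmetric symmetry $d\mathbf{\Omega}^D(\nu,x_0,\alpha_0)=\mathfrak{S}_3\,d\mathbf{\Omega}^D(-\nu,x_0,\alpha_0)\mathfrak{S}_3$, and verify that the resulting map is indeed unitary onto $L^2(\bbR;d\hatt{\mathbf{\Omega}}_j(\,\cdot\,,x_0))$; that is not shorter than the paper's direct argument.

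Your concluding steps — deducing \eqref{4.64} from unitary equivalence with $M_{\mathrm{id}}$ and bounding the multiplicity by $2m$ from the block-matrix structure of the measure — are correct and match the standard conclusion once unitarity is in hand.
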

%%%%%%%%%%
\begin{proof}
Again, we fix $j \in \{1,2\}$. One observes that the calculation in \eqref{4.46} yields
\begin{align}
\begin{split} 
(E_{H_j}((\lambda_1,\lambda_2])g)(x) = \int_{(\lambda_1,\lambda_2]}
(c_j(\lambda,x,x_0), s_j(\lambda,x,x_0)) \,
d\hatt {\mathbf{\Omega}}_j(\lambda,x_0)\,
\hatt g_j(\lambda,x_0),&  \\
g\in C_0^\infty(\bbR)^m,& \lb{4.49}
\end{split} 
\end{align}
and, as a result, extends to all $g\in L^2(\bbR)^m$
by continuity. Moreover, taking $\lambda_1\downarrow -\infty$ and
$\lambda_2\uparrow \infty$ in \eqref{4.49} and using the spectral family properties
$\slim_{\lambda\downarrow -\infty} E_{H_j}(\lambda)=0$, 
$\slim_{\lambda\uparrow \infty} E_{H_j}(\lambda)=I_{L^2(\bbR)^m}$, where
$E_{H_j}(\lambda)=E_{H_j}((-\infty,\lambda])$, $\lambda\in\bbR$, then yields
\begin{align}
\begin{split} 
g(\cdot) =\slimes_{\mu_1\downarrow -\infty, \mu_2\uparrow \infty}
\int_{(\mu_1,\mu_2]} (c_j(\lambda,\, \cdot \, ,x_0),  
s_j(\lambda,\, \cdot \, ,x_0)) \, d\hatt {\mathbf{\Omega}}_j(\lambda,x_0),&   \\
\hatt g_j(\lambda,x_0), \quad g\in L^2(\bbR)^m,  & \lb{4.52}
\end{split} 
\end{align}
where $\slimes$ here refers to the limit in $L^2(\bbR)^m$.  Next, we show
that the maps $U_j(x_0)$ in \eqref{4.48} are onto and hence that
$U_j(x_0)$ are unitary maps with
\begin{align}
&U_j(x_0)^{-1} : \begin{cases}
L^2(\bbR;d\hatt {\mathbf{\Omega}}_j(\, \cdot \,,x_0)) \to  L^2(\bbR)^m  \\[1mm]
\hatt h \mapsto  h_j, \end{cases} \lb{4.53} \\
& h_j(\cdot)= \slimes_{\mu_1\downarrow -\infty, \mu_2\uparrow
\infty} \int_{(\mu_1,\mu_2]}(c_j(\lambda,\, \cdot \,,x_0),
s_j(\lambda,\, \cdot \,,x_0)) \, d\hatt {\mathbf{\Omega}}_j(\lambda,x_0)\,
\hatt h(\lambda).    \no
\end{align}
Letting $W_j(x_0)$ temporarily denote the operators defined by \eqref{4.53}, one infers that $W_j(x_0)$ are bounded.  Indeed, for any $\hatt f\in C_0^{\infty}(\bbR)^{2m}$, $g\in C_0^{\infty}(\bbR)^{m}$, one computes
\begin{align}
&\big(g,W_j(x_0)\hatt f\,\big)_{L^2(\bbR)^m}\no\\
&\quad=\int_{\bbR}dx\, \bigg(g(x),\int_{\bbR}\big(c_j(\lambda,x, x_0),s_j(\lambda,x,x_0)\big)d\hatt {\mathbf{\Omega}}_j(\lambda,x_0)\hatt f(\lambda) \bigg)_{\bbC^m}\no\\
&\quad=\int_{\bbR}\int_{\bbR}dx\,\bigg(\big(c_j(\lambda,x, x_0),s_j(\lambda,x,x_0)\big)^*g(x),d\hatt {\mathbf{\Omega}}_j(\lambda,x_0)\hatt f(\lambda) \bigg)_{\bbC^{2m}}\no\\
&\quad=(U_j(x_0)g,\hatt f)_{L^2(\bbR;d\hatt {\mathbf{\Omega}}_j(\, \cdot \, ,x_0))}.\lb{4.54}
\end{align}
Since $U_j(x_0)$ are isometries, \eqref{4.54} extends by continuity to all $g\in L^2(\bbR)^{m}$.  Thus,
\begin{align}
\big\|W_j(x_0) \hatt f \,\big\|_{L^2(\bbR)^{m}}
&= \sup_{g\in  L^2(\bbR)^{m}, \, g\neq 0}
\bigg|\f{\big(g,W_j(x_0) \hatt f \,\big)_{L^2(\bbR)^{m}}}{\|g\|_{L^2(\bbR)^{m}}}
\bigg| \no \\
& \leq  \sup_{g\in  L^2(\bbR)^{m}, \, g\neq 0}
\f{\|U_j(x_0) g\|_{L^2(\bbR;d\hatt {\mathbf{\Omega}}_j(\, \cdot \,,x_0))}}
{\|g\|_{L^2(\bbR)^m}}
\big\|\hatt f \,\big\|_{L^2(\bbR;d\hatt {\mathbf{\Omega}}_j(\, \cdot \,,x_0))}  \no \\
& = \big\|\hatt f \,\big\|_{L^2(\bbR;d\hatt {\mathbf{\Omega}}_j(\, \cdot \,,x_0))}, \quad \hatt f\in C_0^\infty(\bbR)^m.   \lb{4.55}
\end{align}
From the limiting relation in \eqref{4.53}, one also infers that
\begin{equation}\lb{4.56}
W_j(x_0)U_j(x_0)=I_{L^2(\bbR)^m}.
\end{equation}
To verify that $U_j(x_0)$, $j=1,2$, are onto, and hence unitary, it suffices to prove that $W_j(x_0)$ are injective. Suppose that $\hatt f=(f_0,f_1)^\top\in \ker(W_j(x_0))$.  Let $\hatt f\in L^2(\bbR;\hatt {\mathbf{\Omega}}_j(\, \cdot \,,x_0))$, $\lambda_1, \lambda_2 \in\bbR$,
$\lambda_1<\lambda_2$, and consider
\begin{align}\lb{a4.57}
& (H_j - zI_{L^2(\bbR)^m}) \bigg(\int_{(\lambda_1,\lambda_2]} \big(c_j(\lambda,\, \cdot \,,x_0),s_j(\lambda,\, \cdot \,,x_0)\big) \,
(\lambda - z)^{-1}d\hatt {\mathbf{\Omega}}_j(\lambda,x_0) \, \hatt f(\lambda)\bigg)   \no  \\
& \quad = \int_{(\lambda_1,\lambda_2]}
\big(c_j(\lambda,\, \cdot \,,x_0),s_j(\lambda,\, \cdot \,,x_0)\big) \, d\hatt {\mathbf{\Omega}}_j(\lambda,x_0) \, \hatt f(\lambda), \quad z\in\bbC_+. 
\end{align}
Then,
\begin{align}
& \int_{(\lambda_1,\lambda_2]}  \big(c_j(\lambda,\, \cdot \,,x_0),s_j(\lambda,\, \cdot \,,x_0)\big)\, 
(\lambda - z)^{-1}\, d\hatt {\mathbf{\Omega}}_j(\lambda,x_0) \, \hatt f(\lambda)    \no\\
& \quad = (H_j - zI_{L^2(\bbR)^m})^{-1} \bigg(\int_{(\lambda_1,\lambda_2]}
\big(c_j(\lambda,\, \cdot \,,x_0),s_j(\lambda,\, \cdot \,,x_0)\big) \, d\hatt {\mathbf{\Omega}}_j(\lambda,x_0)
\, \hatt f(\lambda)\bigg),    \no \\
&\hspace*{9.9cm}  z\in\bbC_+.   \lb{a4.58}
\end{align}
Taking $\slim_{\lambda_1\downarrow -\infty, \lambda_2\uparrow \infty}$ in \eqref{a4.58} implies
\begin{equation}\lb{a4.59}
W_j(x_0) \big((\,\cdot -z)^{-1} \hatt f \,\big) = (H_j -zI_{L^2((\bbR)^m})^{-1} W_j(x_0) \hatt f,
\quad z\in\bbC_+.
\end{equation}
Next, suppose that $\hatt f_0 = (f_0,f_1)^{\top} \in \ker(W_j(x_0))$, and take a sequence
$\big\{\hatt f_n\big\}_{n\in\bbN} \subset L^2(\bbR;d\hatt {\mathbf{\Omega}}_j(\, \cdot \,,x_0))$ such that
$\supp\big(\hatt f_n\big)$ is compact for each $n\in\bbN$ and
$\lim_{n\uparrow\infty}\big\|\hatt f_0 - \hatt f_n\big\|_{L^2(\bbR;d\hatt {\mathbf{\Omega}}_j(\, \cdot \, ,x_0))}=0$.  Then, since each $\hatt f_n$ is compactly supported,
\begin{align}\lb{a4.60}
\begin{split}
\big(W_j(x_0) \big((\, \cdot -z)^{-1} \hatt f_n\big)\big)(x)
= \big((H_j - zI_{L^2(\bbR)^m})^{-1} W_j(x_0) \hatt f_n\big)(x),&
\\
x\in \bbR, \; z\in\bbC_+, \; n\in\bbN.&
\end{split}
\end{align}
Consequently, for each $y\in \bbR$ and all $e\in \bbC^m$, 
\begin{align}
& \int_{x_0}^y dx \int_{\bbR} \Big(e,\big(c_j(\lambda,x,x_0),s_j(\lambda,x,x_0)\big) \, (\lambda - z)^{-1}
\, d\hatt {\mathbf{\Omega}}_j(\lambda,x_0) \, \hatt f_n(\lambda)\Big)_{\bbC^m}   \no \\
& \quad =\int_{\bbR} \bigg( \int_{x_0}^y dx \, \big(c_j(\lambda,x,x_0),s_j(\lambda,x,x_0)\big)^* e,
d\hatt {\mathbf{\Omega}}_j(\lambda,x_0) \, (\lambda - z)^{-1} \hatt f_n(\lambda)\bigg)_{\bbC^m} \no \\
& \quad = \int_{x_0}^y dx \, \Big(e, \big((H_j-zI_{L^2(\bbR)^m})^{-1} W_j(x_0) \hatt f_n\big)(x)\Big)_{\bbC^m}.
\lb{a4.61}
\end{align}
One observes that
\begin{equation}\lb{a4.62}
\begin{split}
&\int_{x_0}^{\infty} dx \, \big(c_j(\lambda,x,x_0),s_j(\lambda,x,x_0)\big)^* \chi_{[x_0,y]}(x) e\\
&\quad = (U_j(x_0) \chi_{[x_0,y]} e)(\cdot) \in L^2(\bbR;d\hatt {\mathbf{\Omega}}_j(\, \cdot \, ,x_0)).
\end{split}
\end{equation}
Thus, taking the limit $n\uparrow \infty$ in \eqref{a4.60} yields
\begin{align}
&\lim_{n\uparrow\infty}
\int_{x_0}^y dx \int_{\bbR}  \Big(e, \big(c_j(\lambda,x,x_0),s_j(\lambda,x,x_0)\big)
d\hatt {\mathbf{\Omega}}_j(\lambda,x_0) \, (\lambda - z)^{-1}  \hatt f_n(\lambda)\Big)_{\bbC^m} \no \\
& \quad =  \int_{\bbR}  (\lambda - z)^{-1}  \int_{x_0}^y dx \, \Big(e, \big(c_j(\lambda,x,x_0),s_j(\lambda,x,x_0)\big)
d\hatt {\mathbf{\Omega}}_j(\lambda,x_0) \hatt f_0(\lambda)\Big)_{\bbC^m}   \lb{a4.63} \\
& \quad = \lim_{n\uparrow\infty}
\int_{x_0}^y dx \, \Big(e, \big((H_j-zI_{L^2((\bbR)^m})^{-1} W_j(x_0) \hatt f_n\big)(x)\Big)_{\bbC^m} \no \\
& \quad = \int_{x_0}^y dx \, \big(e, \big((H_j-zI_{L^2(\bbR)^m})^{-1} W_j(x_0) \hatt f_0\big)(x)\big)_{\bbC^m} =0,   \no\\
&\hspace*{4.9cm}y\in \bbR, \; z\in\bbC_+, \; e\in\bbC^m.   \no
\end{align}
Applying the Stieltjes inversion formula to the (finite) complex-valued measure in
the 3rd line of \eqref{a4.63}, given by,
\begin{equation}\lb{a4.64}
\int_{x_0}^y dx \, \Big(e, \big(c_j(\lambda,x,x_0),s_j(\lambda,x,x_0)\big)
\, d\hatt {\mathbf{\Omega}}_j(\lambda,x_0) \, \hatt f_0(\lambda)\Big)_{\bbC^m},
\end{equation}
implies for all $\lambda_1, \lambda_2 \in\bbR$, $\lambda_1 < \lambda_2$, and $e \in \bbC^m$,
\begin{equation}
\int_{(\lambda_1,\lambda_2]}
\int_{x_0}^y dx \, \Big(e, \big(c_j(\lambda,x,x_0),s_j(\lambda,x,x_0)\big)
\, d\hatt {\mathbf{\Omega}}_j(\lambda,x_0) \, \hatt f_0(\lambda)\Big)_{\bbC^m} =0, \quad y\in \bbR.   \lb{a4.65}
\end{equation}
Differentiating \eqref{a4.65} with respect to $y$, noting that $c_j(\lambda,y,x_0)$ and $s_j(\lambda,y,x_0)$ are continuous in $(\lambda,y)\in \bbR^2$, and using the dominated convergence theorem, one obtains
\begin{equation}\lb{4.58}
\int_{(\lambda_1,\lambda_2]}\Big(e, \big(c_j(\lambda,y,x_0),s_j(\lambda,y,x_0)\big)d\hatt {\mathbf{\Omega}}_j(\lambda,x_0)\hatt f_0(\lambda)\Big)_{\bbC^m}=0,\quad y\in \bbR, \; e\in \bbC^m.
\end{equation}
In particular, taking $y=x_0$ in \eqref{4.58} and using \eqref{3.17} and 
\eqref{3.18}, one obtains
\begin{align}
&\int_{(\lambda_1,\lambda_2]}\Big(e_1, \big(c_j(\lambda,x_0,x_0),s_j(\lambda,x_0,x_0)\big)d\hatt {\mathbf{\Omega}}_j(\lambda,x_0)\hatt f_0(\lambda)\Big)_{\bbC^m}\no\\
&\quad= \int_{(\lambda_1,\lambda_2]}\big((e_1,0_m)^{\top}, d\hatt {\mathbf{\Omega}}_j(\lambda,x_0)\hatt f_0(\lambda)\big)_{\bbC^{2m}}=0,\quad e_1\in \bbC^m,\lb{4.59}
\end{align}
where $0_m$ in \eqref{4.59} denotes the zero vector in $\bbC^m$.  Next, applying 
the quasi-derivative $[1,j]$ with respect to $y \in \bbR$ to \eqref{4.58}, yields 
\begin{align}\lb{4.58A}
\begin{split}
\int_{(\lambda_1,\lambda_2]}\Big(e, \big(c_j^{[1,j]}(\lambda,y,x_0),s_j^{[1,j]}(\lambda,y,x_0)\big)d\hatt {\mathbf{\Omega}}_j(\lambda,x_0)\hatt f_0(\lambda)\Big)_{\bbC^m}=0,&  \\
y\in \bbR, \; e\in \bbC^m,&
\end{split}
\end{align}
using the fact that
\begin{align}\lb{4.58a}
\begin{split} 
\int_{(\lambda_1,\lambda_2]}
\Big(e, \big(\phi(y) c_j(\lambda,y,x_0), \phi(y) s_j(\lambda,y,x_0)\big)d\hatt {\mathbf{\Omega}}_j(\lambda,x_0)\hatt f_0(\lambda)\Big)_{\bbC^m}=0,& \\ 
y\in \bbR, \; e\in \bbC^m.&
\end{split}
\end{align}
Subsequently, taking $y=x_0$ in \eqref{4.58A}, once more using \eqref{3.17} and 
\eqref{3.18} yields
\begin{equation}\lb{4.60}
\int_{(\lambda_1,\lambda_2]}\big((0_m,e_2)^{\top}, d\hatt {\mathbf{\Omega}}_j(\lambda,x_0)\hatt f_0(\lambda)\big)_{\bbC^{2m}}=0,\quad e_2\in \bbC^m.
\end{equation}
Taking $e=(e_1,e_2)^{\top}$ with $e_1,e_2\in \bbC^m$ and adding \eqref{4.59} and \eqref{4.60}, one obtains
\begin{equation}\lb{4.61}
\int_{(\lambda_1,\lambda_2]}\big(e, d\hatt {\mathbf{\Omega}}_j(\lambda,x_0)\hatt f_0(\lambda)\big)_{\bbC^{2m}}=0,\quad e\in \bbC^{2m}.
\end{equation}
Since $\lambda_1$ and $\lambda_2$ are arbitrary (apart from $\lambda_1<\lambda_2$), \eqref{4.61} implies
\begin{equation}\lb{4.62}
\hatt f_0(\lambda)=0 \, \text{ for $d\hatt {\mathbf{\Omega}}_j(\, \cdot \,,x_0)$-a.e.\ $\lambda \in \bbR$.}
\end{equation}
\end{proof}
%%%%%%%%%%

The proofs of Theorems \ref{t4.5} and \ref{t4.6} are adaptation of the proofs of Theorems 2.12 and 2.14  
in \cite{GZ06}. This strategy of proof immediately extends to all continuous or discrete second-order 
problems (such as Sturm--Liouville, Jacobi, and CMV operators) and first-order $2 \times 2$ systems (i.e., Dirac-type operators) with matrix-valued coefficients, see, for instance, \cite{CGZ07}, \cite{CGZ08}. In fact, 
it also extends to the infinite-dimensional case of bounded operator-valued coefficients (for the case of 
Schr\"odinger operators with bounded operator-valued potentials, see \cite{GWZ12}). 

In our final result, we show that the known local Borg--Marchenko results for Dirac-type operators worked out in \cite{CG02} immediately imply local Borg--Marchenko results for generalized Schr\"odinger operators of the form $H_j$, $j=1,2$. For simplicity, we focus on $H_1$ only.

%%%%%%%%%%
\begin{theorem} \lb{t4.7} 
Suppose Hypothesis \ref{h2.1} holds with $\phi_j$, $j=1,2$, in place of $\phi$.  Let $H_{1,1}$ $($resp., $H_{1,2}$$)$ denote the operator defined in \eqref{3.5} taking $\phi=\phi_1$ $($resp., $\phi=\phi_2$$)$ and denote by $\hatt {\mathbf{M}}_{1,1}(z,x_0)$ $($resp., $\hatt {\mathbf{M}}_{1,2}(z,x_0)$$)$ the corresponding $2m\times 2m$ block Weyl--Titchmarsh matrix as defined in \eqref{4.25}--\eqref{4.29}.  Then,  
\begin{equation}\lb{4.65}
\text{if for some $a>0$, $\phi_1(x)=\phi_2(x)$ for a.e.\ $x\in (x_0-a,x_0+a)$,}
\end{equation}
one obtains
\begin{equation}\lb{4.66}
\big\|\hatt {\mathbf{M}}_{1,1}(z,x_0) - \hatt {\mathbf{M}}_{1,2}(z,x_0) \big\|_{\bbC^{2m\times2m}}\underset{\substack{|z|\rightarrow \infty \\ z\in \rho_{\theta}}}{=}
\Oh\big(z^{1/2} e^{-2\Im(z^{1/2})a} \big),
\end{equation}
along any ray $\rho_{\theta}\subset \bbC$ with $\arg(z)=\theta\in (0,\pi)\cup(\pi,2\pi)$, always choosing the branch of the square root with $\Im(z^{1/2})>0$ for $z\in \bbC\backslash [0,\infty)$.  On the other hand, suppose that for all $\varepsilon>0$,
\begin{equation}\lb{4.66a}
\big\|\hatt {\mathbf{M}}_{1,1}(z,x_0)-\hatt {\mathbf{M}}_{1,2}(z,x_0)\big\|_{\bbC^{2m\times 2m}}\underset{\substack{|z|\rightarrow \infty \\ z\in \rho_{\theta_{\ell}}}}{=}\Oh\big(z^{1/2} e^{-2\Im(z^{1/2})(a-\varepsilon)} \big),\quad \ell=1,2,
\end{equation}
along a ray $\rho_{\theta_1}\subset \bbC$ with $\arg(z)=\theta_1$ and $0<\theta_1<\pi$ and along a ray $\rho_{\theta_2}\subset \bbC$ with $\arg(z)=\theta_2$ and $\pi<\theta_2<2\pi$. If $m>1$, assume in addition that $\phi_j\in L^{\infty}([x_0-a,x_0+a])^{m\times m}$, $j=1,2$.  Then
\begin{equation}\lb{4.66b}
\text{$\phi_1(x)=\phi_2(x)$ for a.e.\ $x\in [x_0-a,x_0+a]$.}
\end{equation}
\end{theorem}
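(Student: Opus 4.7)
My plan is to reduce Theorem \ref{t4.7} to the known local Borg--Marchenko uniqueness results for supersymmetric Dirac-type operators established in \cite{CG02}, via the explicit algebraic identity \eqref{4.67} that couples $\hatt{\mathbf M}_1(z,x_0)$ to $\mathbf M^D(\zeta,x_0,\alpha_0)$ through the substitution $z=\zeta^2$, $\Im(\zeta)>0$. Let $D_\ell$, $\ell=1,2$, denote the supersymmetric Dirac operator built from $\phi_\ell$, and let $\mathbf{M}^{D_\ell}(\zeta,x_0,\alpha_0)$ be the associated full-line $2m\times 2m$ Weyl--Titchmarsh matrix from Section \ref{s2}. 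From \eqref{4.67} one reads off that each block of $\hatt{\mathbf M}_1(z,x_0)$ is obtained from the corresponding block of $\mathbf{M}^D(\zeta,x_0,\alpha_0)$ by multiplication with $\zeta^{\pm 1}$ or $1$; explicitly, the $(0,0)$, $(0,1)$, $(1,0)$, $(1,1)$ blocks pick up the factors $\zeta^{-1}$, $1$, $1$, $\zeta$, respectively. Inverting the same identity,
\begin{equation*}
\mathbf{M}^D(\zeta,x_0,\alpha_0)
=\begin{pmatrix}\zeta I_m & 0 \\ 0 & I_m\end{pmatrix}
\hatt{\mathbf M}_1(z,x_0)
\begin{pmatrix} I_m & 0 \\ 0 & \zeta^{-1} I_m\end{pmatrix},
\end{equation*}
so the blocks of $\mathbf{M}^D$ are obtained from those of $\hatt{\mathbf M}_1$ by multiplication with $\zeta$, $1$, $1$, $\zeta^{-1}$, respectively.

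For the direct implication, assume $\phi_1=\phi_2$ a.e.\ on $(x_0-a,x_0+a)$. The local Borg--Marchenko theorem for the Dirac operator \cite[Thm.~3.6, Thm.~3.7]{CG02}, applied under the additional $L^\infty$-hypothesis when $m>1$, yields
\begin{equation*}
\bigl\|\mathbf{M}^{D_1}(\zeta,x_0,\alpha_0) - \mathbf{M}^{D_2}(\zeta,x_0,\alpha_0)\bigr\|_{\bbC^{2m\times 2m}}
\underset{|\zeta|\to\infty,\ \zeta\in\rho_\theta'}{=} \Oh\bigl(e^{-2\Im(\zeta) a}\bigr)
\end{equation*}
along any non-real ray $\rho_\theta'$ in the $\zeta$-plane. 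Choosing the branch $\zeta=z^{1/2}$ with $\Im(\zeta)>0$ for $z\in\bbC\setminus[0,\infty)$, a ray $\rho_\theta$ in the $z$-plane with $\arg(z)=\theta\in(0,\pi)\cup(\pi,2\pi)$ maps to a non-real ray $\rho_{\theta/2}'$ in the $\zeta$-plane, and $|\zeta|=|z|^{1/2}$. Applying \eqref{4.67} blockwise and taking the worst-case prefactor $|\zeta|$ from the $(1,1)$-block, one obtains
\begin{equation*}
\bigl\|\hatt{\mathbf M}_{1,1}(z,x_0)-\hatt{\mathbf M}_{1,2}(z,x_0)\bigr\|_{\bbC^{2m\times 2m}}
= \Oh\bigl(|\zeta|\, e^{-2\Im(\zeta) a}\bigr)
= \Oh\bigl(z^{1/2}\,e^{-2\Im(z^{1/2})a}\bigr),
\end{equation*}
which is precisely \eqref{4.66}.

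For the converse implication, assume \eqref{4.66a} holds along rays $\rho_{\theta_1}$ and $\rho_{\theta_2}$ with $\theta_1\in(0,\pi)$, $\theta_2\in(\pi,2\pi)$. Using the inverted identity displayed above, the $\zeta$-prefactors in the four blocks contribute at worst a factor $|\zeta|=|z|^{1/2}$, hence
\begin{equation*}
\bigl\|\mathbf{M}^{D_1}(\zeta,x_0,\alpha_0) - \mathbf{M}^{D_2}(\zeta,x_0,\alpha_0)\bigr\|_{\bbC^{2m\times 2m}}
= \Oh\bigl(|\zeta|\cdot |z|^{1/2}\, e^{-2\Im(z^{1/2})(a-\varepsilon)}\bigr)
= \Oh\bigl(|\zeta|^2 e^{-2\Im(\zeta)(a-\varepsilon)}\bigr)
\end{equation*}
along the corresponding rays $\rho_{\theta_1/2}'$ and $\rho_{\theta_2/2}'$ in the $\zeta$-plane. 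Since the polynomial prefactor $|\zeta|^2$ can be absorbed into an arbitrarily small change in the exponent, one obtains $\Oh\bigl(e^{-2\Im(\zeta)(a-\varepsilon')}\bigr)$ for every $\varepsilon'>0$. Under the imposed $L^\infty$-assumption on $\phi_1,\phi_2$ when $m>1$, the converse half of the local Borg--Marchenko theorem for Dirac operators \cite[Thm.~3.7]{CG02} then forces $\phi_1=\phi_2$ a.e.\ on $[x_0-a,x_0+a]$, proving \eqref{4.66b}.

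The main technical point is the careful bookkeeping of the $\zeta$-prefactors arising from the block structure of \eqref{4.67} and the branch-of-square-root conventions, together with verifying that the two rays in the converse hypothesis \eqref{4.66a} indeed map to one ray in each half of the $\zeta$-plane as required by the Dirac Borg--Marchenko statement. Once this translation is in place, the result is immediate from \cite{CG02}; no further oscillation-theoretic or transformation-operator arguments are needed, which is the principal payoff of the supersymmetric formalism.
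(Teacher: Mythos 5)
Your proof is correct and mirrors the paper's own argument: translate to the Dirac-operator setting via \eqref{4.67}, invoke the local Borg--Marchenko theorem for Dirac operators from \cite{CG02} (the paper cites its Theorem~5.5), and absorb the resulting polynomial $\zeta$-prefactors by a slight $\varepsilon$-shift in the exponent. Your prefactor count of $|\zeta|^2$ in the converse direction is in fact what a careful bookkeeping gives; the paper's displayed $\Im(\zeta)$ in its intermediate estimate \eqref{4.78} is an inconsequential slip, since in either case the quantity is bounded against $e^{-2\Im(\zeta)\epsilon}$ along the relevant rays.
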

%%%%%%%%%%
\begin{proof}
We begin by fixing some notation.  Let $\mathbf{M}_j^D(\zeta,x_0,\alpha_0)$, $j=1,2$, denote the $2m\times 2m$ Weyl--Titchmarsh matrix defined by \eqref{2.29} corresponding to the Dirac-type operator $D_j$, $j=1,2$ defined by \eqref{2.5}, that is, the operator (formally) defined as
\begin{equation}\lb{4.68}
D_j=J\frac{d}{dx}-B_j(x), \quad B_j(x)=\begin{pmatrix} 0 & - \phi_j(x) \\ - \phi_j(x) & 0\end{pmatrix}, \; j=1,2.
\end{equation}
As a result of \eqref{4.67}, one estimates
\begin{align}
&\big\|\hatt {\mathbf{M}}_{1,1}(z,x_0) - \hatt {\mathbf{M}}_{1,2}(z,x_0) \big\|_{\bbC^{2m\times2m}}\no\\
&\quad \leq C|\zeta|\big\|\mathbf{M}_1^D(\zeta,x_0,\alpha_0)-\mathbf{M}_2^D(\zeta,x_0,\alpha_0)\big\|_{\bbC^{2m\times 2m}},\lb{4.69}\\
&\hspace*{4.33cm}\zeta^2=z\in \bbC\backslash \bbR,\, |z|\geq 1,\no
\end{align}
for a $z$-independent constant $C>0$.

Evidently, taking $|z|\rightarrow \infty$ along the fixed ray $\rho_{\theta}\subset \bbC$ with $\arg(z)=\theta$ and $\theta\in (0,\pi)\cup (\pi,2\pi)$ implies
\begin{equation}\lb{4.70}
\text{$|\zeta|\rightarrow \infty$, along the fixed ray $\rho_{\theta/2}\subset \bbC_+$ with $0<\arg(\zeta)=\theta/2<\pi$,}
\end{equation}
if $\zeta^2=z$.
The assumption in \eqref{4.65} implies
\begin{equation}\lb{4.71}
\text{$B_1(x)=B_2(x)$ for a.e.\ $x\in (x_0-a,x_0+a)$,}
\end{equation}
with $B_j(\cdot)$, $j=1,2$, as in \eqref{4.68}.  Consequently, \cite[Theorem 5.5]{CG02} implies
\begin{equation}\lb{4.72}
\big\|\mathbf{M}_1^D(\zeta,x_0,\alpha_0)-\mathbf{M}_2^D(\zeta,x_0,\alpha_0)\big\|_{\bbC^{2m\times 2m}}\underset{\substack{|\zeta|\rightarrow \infty \\ \zeta\in \rho_{\theta/2}}}{=}\Oh\big(e^{-2\Im(\zeta)a} \big).
\end{equation}
As $\Re(\zeta)=2\theta^{-1}\Im(\zeta)$ for $\zeta \in \rho_{\theta/2}$, \eqref{4.72} implies
\begin{equation}\lb{4.73}
|\zeta|\big\|\mathbf{M}_1^D(\zeta,x_0,\alpha_0)-\mathbf{M}_2^D(\zeta,x_0,\alpha_0)\big\|_{\bbC^{2m\times 2m}}\underset{\substack{|\zeta|\rightarrow \infty \\ \zeta\in \rho_{\theta/2}}}{=}\Oh\big(\Im(\zeta)e^{-2\Im(\zeta)a} \big).
\end{equation}
Together, \eqref{4.69} and \eqref{4.73} yield \eqref{4.66}.

Assuming \eqref{4.66a}, in order to prove \eqref{4.66b}, it suffices to prove 
\begin{equation}\lb{4.74}
\big\|\mathbf{M}_1^D(\zeta,x_0,\alpha_0)-\mathbf{M}_2^D(\zeta,x_0,\alpha_0)\big\|_{\bbC^{2m\times 2m}}\underset{\substack{|\zeta|\rightarrow \infty \\ \zeta\in \rho_{\theta_{\ell}/2}}}{=}\Oh\big(e^{-2\Im(\zeta)(a-\epsilon)} \big),\quad \epsilon>0,\, \ell=1,2, 
\end{equation}
since $\rho_{\theta_1/2}$ (resp., $\rho_{\theta_2/2})$ is a ray in $\bbC$ with $0<\arg(\zeta)=\theta_1/2<\pi/2$ (resp., $\pi/2<\arg(\zeta)=\theta_2/2<\pi$).
Indeed, by \cite[Theorem 5.5]{CG02}, \eqref{4.74} implies
\begin{equation}\lb{4.75}
\text{$B_1(x)=B_2(x)$ for a.e.\ $x\in [x_0-a,x_0+a]$,}
\end{equation}
which clearly yields \eqref{4.66b}.  In order to prove \eqref{4.74}, let $\epsilon>0$.  Making use of \eqref{4.67}, one writes
\begin{align}
&\big\|\mathbf{M}_1^D(\zeta,x_0,\alpha_0)-\mathbf{M}_2^D(\zeta,x_0,\alpha_0)\big\|_{\bbC^{2m\times 2m}}\no\\
&\quad = \bigg\|\begin{pmatrix} \zeta I_m & 0\\ 0 & I_m\end{pmatrix}\Big[\hatt {\mathbf{M}}_{1,1}(z,x_0) - \hatt {\mathbf{M}}_{1,2}(z,x_0) \Big] \begin{pmatrix} I_m & 0 \\ 0 & \zeta^{-1} I_m \end{pmatrix}\bigg\|_{\bbC^{2m\times 2m}}\lb{4.76}\\
&\quad \leq C|\zeta|\Big\| \hatt {\mathbf{M}}_{1,1}(z,x_0) - \hatt {\mathbf{M}}_{1,2}(z,x_0)\Big\|_{\bbC^{2m\times 2m}}, \quad z=\zeta^2,\, \zeta\in\rho_{\theta_{\ell}/2},\, |\zeta|\geq 1,\, \ell=1,2,\no
\end{align}
for some $\zeta$-independent constant $C>0$.  Choosing $\varepsilon=2\epsilon$, one computes
\begin{align}
&e^{2\Im(\zeta)(a-\epsilon)}\big\|\mathbf{M}_1^D(\zeta,x_0,\alpha_0)-\mathbf{M}_2^D(\zeta,x_0,\alpha_0)\big\|_{\bbC^{2m\times 2m}}\no\\
&\quad \leq C\, |\zeta |e^{2\Im(\zeta)(a-\epsilon)}\Big\| \hatt {\mathbf{M}}_{1,1}(\zeta^2,x_0) - \hatt {\mathbf{M}}_{1,2}(\zeta^2,x_0)\Big\|_{\bbC^{2m\times 2m}}\lb{4.77}\\
&\quad \leq \widetilde{C}\, \Im(\zeta)e^{2\Im(\zeta)(a-\epsilon)}e^{-2\Im(\zeta)(a-\varepsilon)}\lb{4.78}\\
&\quad = \widetilde{C}\, \Im(\zeta)e^{-2\Im(\zeta)\epsilon},\quad \zeta\in \rho_{\theta_{\ell}/2},\, |\zeta |\gg 1,\, \ell=1,2,\lb{4.79}
\end{align}
where $\tilde C>0$ is an appropriate $\zeta$-independent constant.
The estimate in \eqref{4.77} makes use of \eqref{4.76} and the estimate in \eqref{4.78} uses the assumption in \eqref{4.66a} which is applicable since $\zeta\in \rho_{\theta_{\ell}/2}$, $|\zeta |\gg 1$ implies $z=\zeta^2\in \rho_{\theta_{\ell}}$, $|z|\gg 1$.  The function of $\zeta$ in \eqref{4.79} is bounded as $|\zeta |\rightarrow \infty$, $\zeta \in \rho_{\theta_{\ell}/2}$, $\ell=1,2$; hence, \eqref{4.79} implies 
\begin{equation}\lb{4.80}
\big\|\mathbf{M}_1^D(\zeta,x_0,\alpha_0)-\mathbf{M}_2^D(\zeta,x_0,\alpha_0)\big\|_{\bbC^{2m\times 2m}}\underset{\substack{|\zeta|\rightarrow \infty \\ \zeta\in \rho_{\theta_{\ell}/2}}}{=}\Oh\big(e^{-2\Im(\zeta)(a-\epsilon)} \big),\quad \ell=1,2.
\end{equation}
Since $\epsilon>0$ was arbitrary, \eqref{4.74} is established.
\end{proof}
%%%%%%%%%%

The reason for the additional assumption $\phi_j\in L^{\infty}([x_0-a,x_0+a])^{m\times m}$, $j=1,2$, in the case $m>1$, is due to a technical issue as explained in detail in \cite[Remark\ 5.4]{CG02}. It should 
be noted in this connection that for matrix-valued Schr\"odinger operators $H_{1,j}$, $j=1,2$, with the standard hypothesis on potentials, that is, $V_j = V_j^* \in L^1_{\loc}(\bbR)^{m\times m}$, the local 
Borg--Marchenko-type results in Theorem \ref{t4.7} were derived in \cite{GKM02} (see also \cite{GS00}). 
In this case no analog of the local boundedness assumptions on $\phi_j$ were necessary.  

Finally, in the context of Theorem \ref{t4.7}, we would also like to point out that an alternative approach to 
local Borg--Marchenko-type uniqueness results for Dirac-type operators, including a procedure for 
recovering the potential coefficient, was developed by Sakhnovich \cite{Sa02} (see also \cite{Sa06} and 
\cite[Sects.\ 2.2.3, 4.2.2]{SSR13}).

%%%%%%%%%%%%%%%%%%%%%%%%%%%%%%%%%%%%%%
%%%%%%%%%%%%%%% appendices %%%%%%%%%%%%%%%%
\appendix
%%%%%%%%%%%% Appendix A %%%%%%%%%%%%%%
\section{Supersymmetric Dirac-Type Operators in a Nutshell} \lb{sA}
\renewcommand{\theequation}{A.\arabic{equation}}
\renewcommand{\thetheorem}{A.\arabic{theorem}}
\setcounter{theorem}{0} \setcounter{equation}{0}
%%%%%%%%%%%%%%%%%%%%%%%%%%%%%%%%%%%%%%
%%%%%%%%%%%%%%%%%%%%%%%%%%%%%%%%%%%%%%

In this appendix we briefly summarize some results on supersymmetric 
Dirac-type operators and commutation methods due to \cite{De78}, 
\cite{GSS91}, \cite{Th88}, and \cite[Ch.\ 5]{Th92} (see also \cite{HKM00}). 

The standing assumption in this appendix will be the following.

%%%%%%%%%%%%%
\begin{hypothesis} \lb{hA.1}
Let $\cH_j$, $j=1,2$, be separable complex Hilbert spaces and 
\begin{equation}
T: \cH_1 \supseteq \dom(T) \to \cH_2    \lb{A.1}
\end{equation} 
be a densely defined, closed, linear operator. 
\end{hypothesis}
%%%%%%%%%%%%%

We define the self-adjoint Dirac-type operator in $\cH_1 \oplus \cH_2$ by 
\begin{equation}
Q = \begin{pmatrix} 0 & T^* \\ T & 0 \end{pmatrix}, \quad \dom(Q) = \dom(T) \oplus \dom(T^*).     
\lb{A.2}
\end{equation}
Operators of the type $Q$ play a role in supersymmetric quantum mechanics (see, e.g., the extensive list of references in \cite{BGGSS87}). Then,
\begin{equation}
Q^2 = \begin{pmatrix} T^* T & 0 \\ 0 & T T^* \end{pmatrix}     \lb{A.3}
\end{equation}
and for notational purposes we also introduce
\begin{equation}
H_1 = T^* T \, \text{ in } \, \cH_1, \quad H_2 = T T^* \, \text{ in } \, \cH_2.    \lb{A.4}
\end{equation}
In the following, we also need the polar decomposition of $T$ and $T^*$, that is, the representations
\begin{align}
T& = V_T |T| = |T^*| V_T = V_T T^* V_T  \, \text{ on } \, \dom(T) = \dom(|T|),  
\label{A.4a} \\
T^*& = V_{T^*} |T^*| = |T|V_{T^*} = V_{T^*} T V_{T^*}   \, \text{ on } \, 
\dom(T^*) = \dom(|T^*|),   \label{A.4b} \\
|T|& = V_{T^*} T=T^* V_T = V_{T^*} |T^*| V_T \, \text{ on } \, \dom(|T|),    \label{A.4c} \\
 |T^*|& = V_T T^* = T V_{T^*} = V_T |T| V_{T^*}  \, \text{ on } \, \dom(|T^*|),   
 \label{A.4d}
\end{align}
where
\begin{align}
& |T| = (T^* T)^{1/2}, \quad |T^*| = (T T^*)^{1/2},  \quad
V_{T^*} = (V_T)^*,     \lb{A.6a} \\
& V_{T^*} V_T = P_{\ol{{\ran}(|T|)}}=P_{\ol{{\ran}(T^*)}} \, ,  \quad
V_T V_{T^*} = P_{\ol{{\ran}(|T^*|)}}=P_{\ol{{\ran}(T)}} \, .      \label{A.4e}
\end{align}
In particular, $V_T$ is a partial isometry with initial set $\ol{{\ran}(|T|)}$
and final set $\ol{{\ran}(T)}$ and hence $V_{T^*}$ is a partial isometry with initial 
set $\ol{\ran(|T^*|)}$ and final set $\ol{\ran(T^*)}$. In addition, 
\begin{equation}
V_T = \begin{cases} \ol{T (T^* T)^{-1/2}} = \ol{(T T^*)^{-1/2}T} & \text{on }  (\ker (T))^{\bot},  \\
0 & \text{on }  \ker (T).  \end{cases}     \lb{A.7}
\end{equation} 

Next, we collect some properties relating $H_1$ and $H_2$.

%%%%%%%%%%%
\begin{theorem} [\cite{De78}] \lb{tA.2}  
Assume Hypothesis \ref{hA.1} and let $\phi$ be a bounded Borel measurable 
function on $\bbR$. \\ 
$(i)$ One has
\begin{align}
& \ker(T) = \ker(H_1) = (\ran(T^*))^{\bot}, \quad \ker(T^*) = \ker(H_2) = (\ran(T))^{\bot},  \lb{A.8} \\
& V_T H_1^{n/2} = H_2^{n/2} V_T, \; n\in\bbN, \quad 
V_T \phi(H_1) = \phi(H_2) V_T.  \lb{A.9} 
\end{align}
$(ii)$ $H_1$ and $H_2$ are essentially isospectral, that is, 
\begin{equation}
\sigma(H_1)\backslash\{0\} = \sigma(H_2)\backslash\{0\},    \lb{A.10}
\end{equation}
in fact, 
\begin{equation}
T^* T [I_{\cH_1} - P_{\ker(T)}] \, \text{ is unitarily equivalent to } \, T T^* [I_{\cH_2} - P_{\ker(T^*)}]. 
\lb{A.10a}
\end{equation} 
In addition,
\begin{align}
& f\in \dom(H_1) \, \text{ and } \, H_1 f = \lambda^2 f, \; \lambda \neq 0,   \no \\
& \quad \text{implies }  \,  T f \in \dom(H_2) \, \text{ and } \, H_2(T f) = \lambda^2 (T f),    \lb{A.11} \\
& g\in \dom(H_2)\, \text{ and } \, H_2 \, g = \mu^2 g, \; \mu \neq 0,     \no \\
& \quad \text{implies }  \, T^* g \in \dom(H_1)\, \text{ and } \, H_1(T^* g) = \mu^2 (T^* g),    \lb{A.12} 
\end{align}
with multiplicities of eigenvalues preserved. \\
$(iii)$ One has for $z \in \rho(H_1) \cap \rho(H_2)$,
\begin{align}
& I_{\cH_2} + z (H_2 - z I_{\cH_2})^{-1} \supseteq T (H_1 - z I_{\cH_1})^{-1} T^*,    \lb{A.13} \\
& I_{\cH_1} + z (H_1 - z I_{\cH_1})^{-1} \supseteq T^* (H_2 - z I_{\cH_2})^{-1} T,    \lb{A.14}
\end{align}
and 
\begin{align}
& T^* \phi(H_2) \supseteq \phi(H_1) T^*, \quad 
T \phi(H_1) \supseteq \phi(H_2) T,   \lb {A.15} \\
& V_{T^*} \phi(H_2) \supseteq \phi(H_1) V_{T^*}, \quad 
V_T \phi(H_1) \supseteq \phi(H_2) V_T.   \lb {A.15a}
\end{align}
\end{theorem}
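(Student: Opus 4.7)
The plan is to base everything on the polar decompositions \eqref{A.4a}--\eqref{A.4d} together with the spectral theorem applied to the self-adjoint nonnegative operators $|T|$, $|T^*|$, $H_1 = |T|^2$, and $H_2 = |T^*|^2$. The central intertwining identities to establish first are
\[
V_T\, |T| = |T^*|\, V_T \quad \text{and} \quad V_T\, H_1 \supseteq H_2\, V_T,
\]
which follow from $T = V_T |T| = |T^*| V_T$ and a straightforward iteration. Once these are in place, essentially every statement in Theorem \ref{tA.2} reduces to a bookkeeping exercise in functional calculus plus domain considerations.

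For part (i), the identity $\ker(T)=\ker(H_1)$ is the standard observation that $H_1 f=0$ forces $\|Tf\|^2 = (f,H_1 f)_{\cH_1}=0$; the kernel-range duality $\ker(T) = \ran(T^*)^\perp$ then gives the remaining identifications. For $V_T H_1^{n/2} = H_2^{n/2} V_T$ I would first verify this for $n=1$ from $V_T|T|=|T^*|V_T$, then for general $n$ by iterating (using that $V_T$ maps $\dom(|T|^k)$ into $\dom(|T^*|^k)$ because of \eqref{A.4e} and $\ker(|T|)=\ker(V_T)$). Extension to bounded Borel $\phi$ is the standard argument: polynomials in $|T|$ intertwine by what was just shown, and one passes to $\phi(H_1)$ by approximating $\phi$ pointwise and dominatedly and applying the spectral theorem; here the partial isometry $V_T$ acts as a genuine intertwiner because it is isometric on $\ker(|T|)^\perp$ (where $\phi(|T|)$ is determined by $\phi$ on $\sigma(|T|)\setminus\{0\}$ up to $\phi(0)$, which contributes on $\ker(H_1)$ where $V_T$ vanishes anyway).

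For part (ii), the essential isospectrality follows at once from \eqref{A.9} and the unitary equivalence \eqref{A.10a}. To establish the latter, note that $V_T$ restricted to $(\ker T)^\perp = \overline{\ran(T^*)}$ is, by \eqref{A.4e}, an isometry onto $(\ker T^*)^\perp = \overline{\ran(T)}$; combined with $V_T\, H_1 \subseteq H_2\, V_T$ and $V_{T^*} H_2 \subseteq H_1 V_{T^*}$ this yields the unitary equivalence of $H_1\big|_{(\ker T)^\perp}$ and $H_2\big|_{(\ker T^*)^\perp}$. The eigenvector assertions \eqref{A.11} and \eqref{A.12} follow from the algebraic identities $H_2 T f = T T^* T f = T H_1 f = \lambda^2 T f$ on $\dom(H_1)$, and analogously for $T^*$; the multiplicity statement uses that $T$ is injective on $(\ker T)^\perp$, and that eigenvectors for $\lambda^2\ne 0$ lie in $(\ker T)^\perp$ by the kernel identity in (i).

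For part (iii), the intertwining $T \phi(H_1) \supseteq \phi(H_2) T$ (and its analog for $V_T$, $V_{T^*}$) follows by the same approximation scheme used in part (i): first for polynomials, using $T H_1 \supseteq H_2 T$ iteratively, and then extending to bounded Borel $\phi$ via the spectral theorem, with the containment $\supseteq$ reflecting the fact that $\dom(T\phi(H_1))$ may be strictly larger than $\dom(\phi(H_2) T)$. The resolvent identities \eqref{A.13}, \eqref{A.14} then follow by combining \eqref{A.15} for $\phi(\lambda) = (\lambda - z)^{-1}$ with the formal computation
\[
T(H_1 - zI_{\cH_1})^{-1}T^* \subseteq (H_2 - z I_{\cH_2})^{-1} T T^* \subseteq (H_2 - zI_{\cH_2})^{-1}(H_2 - zI_{\cH_2} + zI_{\cH_2}) = I_{\cH_2} + z(H_2 - zI_{\cH_2})^{-1},
\]
valid on $\dom(T^*)$, and \eqref{A.14} by symmetry. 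The main obstacle I anticipate is the meticulous domain tracking needed when passing from algebraic relations on $\dom(T)\cap\dom(H_1)$ or from polynomials in $|T|$ to bounded Borel functions of $H_1$ and $H_2$; in particular, justifying that the $\supseteq$ inclusions in \eqref{A.13}--\eqref{A.15a} are exactly what one obtains (rather than equalities) requires care, but is handled uniformly by noting that $V_T$, $T$, and $T^*$ all commute appropriately with the spectral projections of $H_1$ and $H_2$ restricted to the complement of $\{0\}$.
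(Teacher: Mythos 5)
Your proposal is correct in substance, but it takes a genuinely different route from the one the paper indicates. The paper does not argue directly with $T$, $T^*$, and their polar decompositions; instead it invokes Nelson's trick: everything is read off from the single self-adjoint operator $Q=\left(\begin{smallmatrix} 0 & T^* \\ T & 0\end{smallmatrix}\right)$ via the elementary identities \eqref{A.16}--\eqref{A.19}. Since $Q^2=H_1\oplus H_2$ and $V_Q=\left(\begin{smallmatrix} 0 & V_{T^*} \\ V_T & 0\end{smallmatrix}\right)$, the block form of $Q\,\phi(Q^2)\supseteq\phi(Q^2)\,Q$ is precisely \eqref{A.15}, the block form of $V_Q\,\phi(Q^2)=\phi(Q^2)\,V_Q$ gives \eqref{A.9} and \eqref{A.15a}, and the off-diagonal blocks of $I+z(Q^2-z)^{-1}\supseteq Q(Q^2-z)^{-1}Q$ give \eqref{A.13}--\eqref{A.14}; the kernel identities \eqref{A.8} come from \eqref{A.17} and \eqref{A.21}. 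Your approach is the direct commutation argument of Deift: establish $V_T\,|T|=|T^*|\,V_T$ and the intertwining of spectral measures, then propagate through the functional calculus. What Nelson's trick buys is that all domain bookkeeping is done once, for the single operator $Q$, and the two-component statements are obtained by mere block decomposition; what your route buys is independence from the auxiliary operator $Q$ and a more transparent view of exactly which domain inclusions produce the $\supseteq$ rather than $=$ in \eqref{A.13}--\eqref{A.15a}. Both are legitimate, and your treatment of the $\phi(0)$ ambiguity on $\ker(H_1)$ (killed by $V_T$) is exactly the right point to address.

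One small imprecision: in your chain for \eqref{A.13} the intermediate expression $(H_2-zI_{\cH_2})^{-1}TT^*$ places the resolvent on the wrong side; applied to $g\in\dom(T^*)$ it would require $g\in\dom(H_2)$. The correct order is to first use $T^*(H_2-zI_{\cH_2})^{-1}\supseteq (H_1-zI_{\cH_1})^{-1}T^*$ to write $(H_1-zI_{\cH_1})^{-1}T^*g=T^*(H_2-zI_{\cH_2})^{-1}g$, and then apply $T$, so that $TT^*$ acts on $(H_2-zI_{\cH_2})^{-1}g\in\dom(H_2)$, yielding $g+z(H_2-zI_{\cH_2})^{-1}g$. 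This is a one-line repair and does not affect the validity of your overall argument.
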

%%%%%%%%%%% 

As noted by E.\ Nelson (unpublished), Theorem \ref{tA.2} follows from the spectral theorem and the 
elementary identities, 
\begin{align}
& Q = V_Q |Q| = |Q| V_Q,    \lb{A.16} \\
& \ker(Q) = \ker(|Q|) = \ker (Q^2) = (\ran(Q))^{\bot} 
= \ker (T) \oplus \ker (T^*),    \lb{A.17}  \\
& I_{\cH_1 \oplus \cH_2} + z (Q^2 - z I_{\cH_1 \oplus \cH_2})^{-1} 
= Q^2 (Q^2 -z I_{\cH_1 \oplus \cH_2})^{-1} \supseteq Q (Q^2 -z I_{\cH_1 \oplus \cH_2})^{-1} Q,  \no \\
& \hspace*{9.3cm}   z \in \rho(Q^2),    \lb{A.18} \\
& Q \phi(Q^2) \supseteq \phi(Q^2) Q,   \lb{A.19}
\end{align}
where
\begin{equation}
V_Q = \begin{pmatrix} 0 & (V_T)^* \\ V_T & 0 \end{pmatrix} 
= \begin{pmatrix} 0 & V_{T^*} \\ V_T & 0 \end{pmatrix}.   \lb{A.20}
\end{equation}

In particular,
\begin{equation}
\ker(Q) = \ker(T) \oplus \ker(T^*), \quad  
P_{\ker(Q)} = \begin{pmatrix} P_{\ker(T)} & 0 \\ 0 & P_{\ker(T^*)} \end{pmatrix},    \lb{A.21}
\end{equation}
and we also recall that
\begin{equation}
\mathfrak{S}_3 Q \mathfrak{S}_3 = - Q, \quad \mathfrak{S}_3 = \begin{pmatrix} I_{\cH_1} & 0 \\ 0 & - I_{\cH_2} 
\end{pmatrix},     \lb{A.22}
\end{equation}
that is, $Q$ and $-Q$ are unitarily equivalent. (For more details on Nelson's 
trick see also \cite[Sect.\ 8.4]{Te09}, \cite[Subsect.\ 5.2.3]{Th92}.) 
We also note that
\begin{equation}
\psi(|Q|) = \begin{pmatrix} \psi(|T|) & 0 \\ 0 & \psi(|T^*|) \end{pmatrix}    \lb{A.22a} 
\end{equation}
for Borel measurable functions $\psi$ on $\bbR$, and 
\begin{equation}
\ol{[Q |Q|^{-1}]} = \begin{pmatrix} 0 & (V_T)^*\\ V_T & 0 \end{pmatrix} = V_Q 
\, \text{ if } \, \ker(Q) = \{0\}.   \lb{A.22b} 
\end{equation}

Finally, we recall the following relationships between $Q$ and $H_j$, $j=1,2$.

%%%%%%%%%%%%
\begin{theorem} [\cite{BGGSS87}, \cite{Th88}] \lb{tA.3}
Assume Hypothesis \ref{hA.1}. \\
$(i)$ Introducing the unitary operator $U$ on $(\ker(Q))^{\bot}$ by
\begin{equation}
U = 2^{-1/2} \begin{pmatrix} I_{\cH_1} & (V_T)^* \\ -V_T & I_{\cH_2} \end{pmatrix} 
\, \text{ on } \,  (\ker(Q))^{\bot},     \lb{A.23}
\end{equation}
one infers that
\begin{equation}
U Q U^{-1} = \begin{pmatrix}  |A| & 0 \\ 0 & - |A^*| \end{pmatrix} 
\, \text{ on } \,  (\ker(Q))^{\bot}.     \lb{A.24}
\end{equation}
$(ii)$ One has
\begin{align}
\begin{split}
(Q - \zeta I_{\cH_1 \oplus \cH_2})^{-1} = \begin{pmatrix} \zeta (H_1 - \zeta^2 I_{\cH_1})^{-1} 
& T^* (H_2 - \zeta^2 I_{\cH_2})^{-1}  \\  T (H_1 - \zeta^2 I_{\cH_1})^{-1}  & 
\zeta (H_2 - \zeta^2 I_{\cH_2})^{-1}  \end{pmatrix},&    \\
\zeta^2 \in \rho(H_1) \cap \rho(H_2).&   \lb{A.25}
\end{split}
\end{align}
$(iii)$ In addition, 
\begin{align}
\begin{split} 
& \begin{pmatrix} f_1 \\ f_2 \end{pmatrix} \in \dom(Q) \, \text{ and } \, 
Q \begin{pmatrix} f_1 \\ f_2 \end{pmatrix} = \eta \begin{pmatrix} f_1 \\ f_2 \end{pmatrix}, \; \eta \neq 0,  
 \\
& \quad \text{ implies } \, f_j \in \dom (H_j) \, \text{ and } \, H_j f_j = \eta^2 f_j, \; j=1,2.    \lb{A.26}
\end{split} 
\end{align}
Conversely,
\begin{align}
\begin{split} 
& f \in \dom(H_1) \, \text{ and } H_1 f = \lambda^2 f, \; \lambda \neq 0, \\
& \quad \text{implies } \, \begin{pmatrix} f \\ \lambda^{-1} T f \end{pmatrix} \in \dom(Q) \, \text{ and } \, 
Q \begin{pmatrix} f \\ \lambda^{-1} T f \end{pmatrix} 
= \lambda \begin{pmatrix} f \\ \lambda^{-1} T f \end{pmatrix}.  
\end{split} 
\end{align}
Similarly,
\begin{align}
\begin{split} 
& g \in \dom(H_2) \, \text{ and } H_2 \, g = \mu^2 g, \; \mu \neq 0, \\
& \quad \text{implies } \, \begin{pmatrix} \mu^{-1} T^* g \\ g \end{pmatrix} \in \dom(Q) \, \text{ and } \, 
Q \begin{pmatrix} \mu^{-1} T^* g \\ g \end{pmatrix} 
= \mu \begin{pmatrix} \mu^{-1} T^* g \\ g \end{pmatrix}.  
\end{split} 
\end{align}
\end{theorem}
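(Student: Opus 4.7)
The plan is to treat the three parts in order, relying throughout on the polar-decomposition identities \eqref{A.4a}--\eqref{A.4e} and on $2\times 2$ block-matrix algebra. For part (i), I will first verify that $U$ maps the space $(\ker(Q))^\bot = \overline{\ran(T^*)} \oplus \overline{\ran(T)}$ unitarily onto itself by the direct block computation
\begin{equation*}
U U^{*} = \tfrac{1}{2}\begin{pmatrix} I_{\cH_1} + V_{T^{*}} V_T & 0 \\ 0 & V_T V_{T^{*}} + I_{\cH_2}\end{pmatrix},
\end{equation*}
which restricts to the identity on $(\ker(Q))^\bot$ thanks to \eqref{A.4e} (and analogously for $U^{*}U$). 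The diagonalization \eqref{A.24} then follows by expanding $U Q U^{*}$ as another block product and invoking $V_{T^{*}} T = T^{*} V_T = |T|$ and $V_T T^{*} = T V_{T^{*}} = |T^{*}|$ from \eqref{A.4c}--\eqref{A.4d}: the diagonal entries collapse to $|T|$ and $-|T^{*}|$, and the off-diagonal entries cancel because $V_{T^{*}}|T^{*}| = T^{*}$ and $V_T |T| = T$.

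For part (ii), the key observation is that $Q^2 = H_1 \oplus H_2$, so $\zeta^2 \in \rho(H_1) \cap \rho(H_2)$ is equivalent to $\zeta^2 \in \rho(Q^2)$, and the standard spectral identity $\sigma(Q^2) = \sigma(Q)^2$ places $\pm \zeta \in \rho(Q)$. Factoring $Q^2 - \zeta^2 I = (Q - \zeta I)(Q + \zeta I)$ on $\dom(Q^2)$ yields $(Q - \zeta I)^{-1} = (Q + \zeta I)(Q^2 - \zeta^2 I)^{-1}$, and substituting the block forms of $Q + \zeta I$ and $(Q^2 - \zeta^2 I)^{-1} = (H_1 - \zeta^2 I)^{-1} \oplus (H_2 - \zeta^2 I)^{-1}$ gives exactly \eqref{A.25}. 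The one point requiring justification is that the off-diagonal blocks are genuinely bounded operators on all of $\cH_j$: this follows from $\ran((H_2 - \zeta^2 I)^{-1}) = \dom(H_2) \subseteq \dom(T^{*})$ (since $H_2 = T T^{*}$) and the analogous containment for $H_1$ and $T$, combined with the closed graph theorem applied to the closed operators $T^{*}(H_2 - \zeta^2 I)^{-1}$ and $T (H_1 - \zeta^2 I)^{-1}$.

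For part (iii), the forward direction is immediate: if $Q(f_1, f_2)^\top = \eta (f_1, f_2)^\top$, the two component equations read $T f_1 = \eta f_2$ and $T^{*} f_2 = \eta f_1$, so $f_1 \in \dom(T^{*} T) = \dom(H_1)$ with $H_1 f_1 = T^{*}(T f_1) = \eta^2 f_1$, and symmetrically $f_2 \in \dom(H_2)$ with $H_2 f_2 = \eta^2 f_2$. For the converse, $H_1 f = \lambda^2 f$ with $\lambda \neq 0$ forces $f \in \dom(T^{*} T)$, which by definition means $T f \in \dom(T^{*})$ and $T^{*}(T f) = \lambda^2 f$; hence $(f, \lambda^{-1} T f)^\top \in \dom(T) \oplus \dom(T^{*}) = \dom(Q)$ and a direct computation gives $Q(f, \lambda^{-1} T f)^\top = \lambda (f, \lambda^{-1} T f)^\top$. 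The $H_2$ case is completely analogous.

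There is no serious obstacle in any of this; the only point requiring care across all three parts is the bookkeeping of domains of unbounded operators. In (i), the matrix manipulations must first be performed on $\dom(Q) \cap (\ker(Q))^\bot$ and then extended by closedness; in (ii), the inclusion $\dom(H_j) \subseteq \dom(T^{(*)})$ is what turns the formal factorization $(Q - \zeta I)^{-1} = (Q + \zeta I)(Q^2 - \zeta^2 I)^{-1}$ into an equality of bounded operators rather than a mere inclusion; and in (iii), the explicit identification $\dom(H_1) = \dom(T^{*} T) = \{f \in \dom(T) : T f \in \dom(T^{*})\}$ is what supplies the required domain membership of the eigenvector $(f, \lambda^{-1} T f)^\top$ for $Q$.
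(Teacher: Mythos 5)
Your proof is correct; note that the paper itself offers no proof of this theorem, merely citing \cite{BGGSS87}, \cite{Th88} and recording the polar-decomposition identities \eqref{A.4a}--\eqref{A.4e} and \eqref{A.16}--\eqref{A.22b}, and your argument (the block computation of $UQU^{-1}$ via $V_{T^*}T=T^*V_T=|T|$, $V_TT^*=TV_{T^*}=|T^*|$, the factorization $(Q-\zeta I)^{-1}=(Q+\zeta I)(Q^2-\zeta^2 I)^{-1}$, and the componentwise eigenvalue equations) is exactly the standard derivation those identities are set up to deliver. The only step worth making explicit in part $(i)$ is that $U$ maps $\dom(Q)\cap(\ker(Q))^{\bot}$ onto itself, which follows from the intertwining relation $V_T|T|=|T^*|V_T$ (cf.\ \eqref{A.9}); in part $(ii)$ the boundedness of the off-diagonal blocks is also immediate from the fact that the full product equals the bounded resolvent, so the closed graph argument, while correct, is not even needed.
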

%%%%%%%%%%%%

\medskip

%%%%%%%%%%%%%%%%%%%%%%%%%%%%%%%%%%%%%
\noindent {\bf Acknowledgments.} We are indebted to Rostyk Hryniv and Alexander 
Sakhnovich for very helpful discussions. 
%%%%%%%%%%%%%%%%%%%%%%%%%%%%%%%%%%%%%

%%%%%%%%%%%%%%%%%%%%%%%%%%%%%%%%
%%%%%%%%%%%%%%%%%%%%%%%%%%%%%%%%

\end{document}